\theoremstyle{plain}
\newtheorem{maintheorem}{Theorem}
\newtheorem{thm}{Theorem}[section]
\newtheorem{corollary}[thm]{Corollary}
\newtheorem{lemma}[thm]{Lemma}
\newtheorem{proposition}[thm]{Proposition}
\theoremstyle{remark}
\theoremstyle{definition}
\newtheorem{definition}[thm]{Definition}
\newcounter{mnotecount}[section]
\newcommand{\definedas}{\mathrel{\raise.095ex\hbox{\rm :}\mkern-5.2mu=}}
\newcommand{\spin}{\text{\rm Spin}}
\newcommand{\adss}{\text{\rm AdSS}}
\def\epsilon{{\varepsilon}}
\def\phi{{\varphi}}
\newcommand{\bR}{\mathbb{R}}
\newcommand{\bH}{\mathbb{H}}
\newcommand{\cA}{\mathcal{A}}
\newcommand{\cM}{\mathcal{M}}
\newcommand{\cN}{\mathcal{N}}
\newcommand{\cF}{\mathcal{F}}
\renewcommand{\hbar}{\overline{h}}
\newcommand{\gtil}{\widetilde{g}}
\newcommand{\Util}{\widetilde{U}}
\newcommand{\Ctil}{\widetilde{C}}
\newcommand{\phitil}{\widetilde{\phi}}
\newcommand{\util}{\widetilde{u}}
\newcommand{\vtil}{\widetilde{v}}
\newcommand{\kulk}{\owedge} 
\DeclareMathOperator{\tr}{tr}
\DeclareMathOperator{\divg}{div}
\DeclareMathOperator{\arcsinh}{arcsinh}
\newcommand{\pdiff} [2]{\frac{\partial #1}{\partial #2}}
\newcommand{\riem}{\mathcal{R}}
\newcommand{\riemuddd}[4]{\riem^{#1}_{\phantom{#1} #2 #3 #4}}
\newcommand{\ric}{\mathrm{Ric}}
\newcommand{\tlric}{\mathring{\ric}}
\newcommand{\ricdd}[2]{\ric_{#1 #2}}
\newcommand{\scal}{\mathrm{Scal}}
\newcommand{\hscal}{\widehat{\scal}}
\DeclareMathOperator{\hess}{Hess}
\newcommand{\hessdd}[2]{\nabla^2_{#1, #2}}
\newcommand{\tlhess}{\mathring{\mathrm{Hess}}}
\newcommand{\tlhessdd}[2]{\mathring{\nabla}_{#1, #2}}
\begin{document} 


\title
{Asymptotically hyperbolic manifolds with small mass}
 
\author{Mattias Dahl}
\address{Institutionen f\"or Matematik \\
  Kungliga Tekniska H\"ogskolan \\
  100 44 Stockholm \\
  Sweden} \email{dahl@math.kth.se}

\author{Romain Gicquaud}
\address{Laboratoire de Math\'ematiques et de Physique Th\'eorique \\
  UFR Sciences et Technologie \\
  Facult\'e Fran\c cois Rabelais \\
  Parc de Grandmont \\
  37200 Tours \\
  France} \email{romain.gicquaud@lmpt.univ-tours.fr}

\author{Anna Sakovich}
\address{Institutionen f\"or Matematik \\
  Kungliga Tekniska H\"ogskolan \\
  100 44 Stockholm \\
  Sweden} \email{sakovich@math.kth.se}

\begin{abstract}
For asymptotically hyperbolic manifolds of dimension $n$ with scalar
curvature at least equal to $-n(n-1)$ the conjectured positive mass 
theorem states that the mass is non-negative, and vanishes only if 
the manifold is isometric to hyperbolic space. In this paper we 
study asymptotically hyperbolic manifolds which are also conformally 
hyperbolic outside a ball of fixed radius, and for which the positive
mass theorem holds. For such manifolds we show that the conformal 
factor tends to one as the mass tends to zero.
\end{abstract}

\subjclass[2000]{53C21, (83C05, 83C30)}
%
%

\date{\today}


\maketitle

\tableofcontents

\section{Introduction}


The mass of an asymptotically hyperbolic Riemannian manifold is a 
geometric invariant which has been introduced by Wang \cite{WangMass} 
and Chru\'sciel and Herzlich \cite{ChruscielHerzlich} using 
different approaches. The mass is computed in a fixed asymptotically 
hyperbolic end and gives a measure of the leading order deviation of
the geometry from a hyperbolic background metric in the end. For the 
family of anti-de~Sitter-Schwarzschild metrics the mass coincides with 
the mass parameter.

In both papers mentioned above, a positive mass theorem is proved for 
spin manifolds using an adaptation of Witten's spinor argument 
\cite{Witten81}. This theorem states that a complete asymptotically 
hyperbolic spin manifold of dimension $n$ must have non-negative mass
if its scalar curvature is at least equal to $-n(n-1)$ (which is the 
scalar curvature of hyperbolic space of the same dimension). Previous 
work in the physics literature include \cite{AbbottDeser82}, 
\cite{Gibbons.et.al83}, \cite{AshtekarMagnon84}. 

The positive mass theorem also contains a rigidity statement saying 
that the mass vanishes if and only if the manifold is isometric to 
hyperbolic space. In Witten's spinor argument the rigidity follows 
from the fact that vanishing mass forces a certain spinor field to 
satisfy the overdetermined Killing equation, which implies that the 
manifold is hyperbolic. Without the spin assumption the positive mass 
theorem for asymptotically hyperbolic manifolds is still open. 
Partial results have been obtained by Andersson, Cai, and Galloway in 
\cite{AnderssonCaiGalloway} where an adaptation of the minimal surface 
method of \cite{SchoenYau79} is used, see also 
\cite[Section $5\frac{5}{6}$]{Gromov96}. In \cite{AnderssonCaiGalloway} 
the rigidity is proved by first showing 
that the manifold is Einstein. This is done by an argument involving 
a deformation of the metric by the traceless Ricci tensor, if this 
is non-zero one can deform to a metric with strictly negative mass 
which gives a contradiction.

With the rigidity statement of the positive mass theorem in mind, it
is natural to ask what happens if the mass is close to zero and the
scalar curvature is at least equal to $-n(n-1)$. Must the manifold then 
be close to hyperbolic space in some appropriate sense? Such a statement 
can never hold true globally, as the example of the 
anti-de~Sitter-Schwarzschild metric shows. 

The same question has been addressed in relation to the rigidity part 
of the positive mass theorem for asymptotically Euclidean manifolds:
must an asymptotically Euclidean manifold with small mass and 
non-negative scalar curvature be close to Euclidean space in some sense? 
Asymptotically Euclidean spin manifolds with small mass have been 
studied by Bray and Finster, see \cite{BrayFinster02} and
\cite{Finster09}. From estimates on the spinor field in Witten's 
argument they find that the $L^2$-norm of the curvature tensor (over 
the manifold minus an exceptional set) is bounded in terms of the mass. 
Lee \cite{LeeNearEquality} studies asymptotically Euclidean manifolds 
which are conformally flat outside a compact set $K$. For such manifolds 
he proves that the conformal factor can be controlled by the mass, so 
that the conformal factor tends uniformly to one outside any ball
containing $K$ as the mass tends to zero. The argument by Lee does not
require the manifold to be spin, but it needs the assumption that the
positive mass theorem holds for any asymptotically Euclidean metric on
the manifold. 

In the present paper we will adapt the ideas of Lee to the setting 
of asymptotically hyperbolic manifolds. We define a class $\cA(R_0)$
of $n$-dimensional asymptotically hyperbolic manifolds $(M, g)$ which
have scalar curvature greater than or equal to $-n(n-1)$ and have a 
chart at infinity 
$\Phi: M \setminus K \to \bH^n \setminus \overline{B}_{R_0}$, where $K$ 
is a compact subset of $M$ and $R_0$ is a given fixed radius. We require 
that $\Phi_* g$ is conformal to the hyperbolic metric, that is
\[
\Phi_* g = U^\kappa b,
\] 
where $\kappa = \frac{4}{n-2}$, and $\scal^g = -n(n-1)$ on 
$M \setminus K$. Further, we assume that the positive mass theorem 
holds for any asymptotically hyperbolic metric on the manifold $M$. 
We prove that given any $\epsilon > 0$, there is $\delta > 0$ 
such that if a metric belongs to the class $\cA(R_0)$ and has mass 
$m < \delta$ then the conformal factor $U$ satisfies 
$|U-1| < \epsilon$. We refer the reader to Definition \ref{defcA} and 
Theorem \ref{thmNearPMT1} for precise statements of the results.

The most stringent assumption of our theorem is probably that the metric
must be conformal to the hyperbolic metric outside a compact subset.
However, in Appendix \ref{secDensity} we prove that every asymptotically
hyperbolic manifold of scalar curvature greater than or equal to 
$-n(n-1)$ can be approximated by metrics which are conformal to the 
hyperbolic metric outside a ball while changing its mass arbitrarily 
little. See Proposition \ref{propDensity} for the precise statement. 
This result generalizes a proposition of Chru\'sciel and Delay,
\cite[Proposition 6.2]{ChruscielDelay}.

The overall strategy of the proof of Theorem \ref{thmNearPMT1} is as 
follows. We define a 1-parameter family of asymptotically hyperbolic 
metrics involving a geometric property of $(M,g)$, and we compute their 
mass. If the mass of $(M, g)$ is close to zero and if it varies too 
widely with respect to the parameter, this yields a contradiction with 
the positive mass theorem. These ideas are inspired by 
\cite{LeeNearEquality}. However, several complications arise in the 
asymptotically hyperbolic context.

One complication is to reduce the proof of the main theorem to the
case of metrics with constant scalar curvature. This is achieved in
Proposition \ref{propAtoA0} by a conformal transformation of the metric. 
In the asymptotically Euclidean context there is a simple formula for 
the change of mass under a conformal transformation of the metric (see
for example \cite[Lemma 2.1]{LeeNearEquality}), which works nicely 
together with the equation for vanishing scalar curvature. The 
corresponding formula in the asymptotically hyperbolic case is not as 
easily combined with the Yamabe equation for constant scalar curvature.
However, in Proposition \ref{propAtoA0} we give an estimate for the 
difference between the two masses in terms of the respective conformal 
factors.

Once this reduction has been done, we can assume that the metrics we are
considering have constant scalar curvature $\scal^g = -n(n-1)$. A second
complication we encounter is to find an appropriate 1-parameter
family of metrics. We want a deformation that can be localized in the
asymptotic region where the metric is conformal to the hyperbolic metric.
In the view of \cite{LeeNearEquality} and \cite{AnderssonCaiGalloway}, a
natural choice would be $\lambda_s = (\phi_s)^\kappa (g - s \chi \tlric)$,
where $\tlric = \ric + (n-1)g$ is the traceless part of the Ricci tensor,
$\chi$ is a cut-off function whose support is contained in the asymptotic
region, and $\phi_s$ is a conformal factor such that the metrics
$\lambda_s$ have constant scalar curvature $-n(n-1)$. However, with this 
choice the formula for the derivative of the mass turns out to be 
tractable only if $\chi \equiv 1$. Interestingly, this difficulty can be 
overcome by replacing $\tlric$ with a tensor measuring how far the metric 
$g$ is from being static, see Lemma \ref{lmVariationMass}.

We also give a simpler proof for spin manifolds, see Theorem 
\ref{thmNearPMT2}. This argument is based on the fact that the mass 
controls a certain functional which measures how close $(M,g)$ is to 
allow a Killing spinor, and this functional in turn depends continuously
on the conformal factor $U$.

The small mass theorem of Lee \cite{LeeNearEquality} appears as an 
ingredient in the proof of the Penrose inequality by Bray 
\cite{BrayPenrose} and Bray and Lee \cite{BrayLee}. In a forthcoming 
work we plan to address an adaptation of Bray's proof of the Penrose 
inequality to the case of asymptotically hyperbolic manifolds. Note
however that the necessity to replace $\tlric$ by a more complicated
tensor in the definition of the 1-parameter family of metrics sheds 
light on what could be the analog of Bray's conformal flow on 
asymptotically hyperbolic manifolds. Even in the purely Riemannian 
context, the lapse function is likely to play an important role in its 
definition.

This paper is organized as follows. In Section \ref{secPreliminaries} 
we give the definitions of asymptotically hyperbolic manifolds and their 
mass. Section \ref{secGeneralcase} begins with the statement of our main 
result, Theorem \ref{thmNearPMT1}. In the first subsection we prove
some results on the conformal factors at infinity for manifolds in 
$\cA(R_0)$. In the second subsection we then give the proof of our main 
theorem deferring parts of the argument to the following subsections. 
The third subsection contains the argument to show that we can reduce 
to the case $\scal = -n(n-1)$ everywhere by a conformal change while 
controlling the mass. The fourth and final subsection contains the 
proofs of the more technical lemmas. In Section \ref{secSpincase} we give 
the alternative argument for spin manifolds. In Appendix \ref{secAdS} we 
collect details of the anti-de~Sitter-Schwarzschild metric which are used 
in the paper. Finally, in Appendix \ref{secDensity}, we prove Proposition
\ref{propDensity} which shows that metrics which satisfy the assumptions
of Theorems \ref{thmNearPMT1} and \ref{thmNearPMT2} are dense in the set
of metrics which satisfy the standard assumptions of the positive mass
theorem.

\subsection*{Acknowledgments}

We thank Julien Cortier and Marc Herzlich for helpful comments on a
preliminary version of this article. We are also grateful to the 
referees for their careful reading of an earlier version of the 
article. Their insightful comments has led to many improvements of 
the presentation of our results.

\section{Preliminaries}
\label{secPreliminaries}


\subsection{The mass of an asymptotically hyperbolic manifold}

Following the work of Chru{\'s}ciel and Herzlich, 
\cite{ChruscielHerzlich} and \cite{HerzlichMassFormulae}, we define 
the mass of an asymptotically hyperbolic manifold. For conformally 
compact manifolds the definition of the asymptotically hyperbolic mass
coincides with the mass introduced by Wang in \cite{WangMass}. In this 
paper we denote $n$-dimensional hyperbolic space by $\bH^n$ and its 
metric is denoted by $b$. We fix a point in $\bH^n$ as origin. In 
polar coordinates around this point we have 
$b = dr^2 + \sinh^2 r \sigma$ on $(0,\infty) \times S^{n-1}$ where 
$\sigma$ denotes the standard round metric on $S^{n-1}$ and $r$ is the
distance from the origin. The open ball of radius $R$ centered at the
origin is denoted by $B_R$ and its closure is denoted by $\overline{B}_R$.

Let 
$\cN \definedas \{ V \in C^{\infty}(\bH^n) \mid \hess^b V = V b \}$.
This is a vector space with a basis consisting of the functions 
\[
V_{(0)} = \cosh r, \, 
V_{(1)} = x^1 \sinh r, \dots , \,
V_{(n)} = x^n \sinh r,
\]
where the functions $x^1, \dots, x^n$ are the coordinate functions
on $\bR^{n}$ restricted to $S^{n-1}$. The vector space $\cN$ is equipped 
with an inner product $\eta$ of Lorentzian signature characterized by 
the condition that the basis above is orthonormal: 
$\eta(V_{(0)}, V_{(0)}) = 1$, and $\eta(V_{(i)}, V_{(i)}) = -1$ for 
$i=1,\dots,n$. We give $\cN$ a time orientation by specifying the vector 
$V_{(0)}$ to be future directed. The subset $\cN^+$ of positive functions 
then coincides with the interior of the future lightcone. We also  
denote by $\cN^1$ the subset of $\cN^+$ consisting of functions $V$ with 
$\eta(V,V) = 1$. In other words, $\cN^1$ is the unit hyperboloid in the 
future lightcone of $\cN$. For a point $p_0 \in \bH^n$ the function
\[
V \definedas \cosh d_b(p, \cdot)
\] 
is in $\cN^1$, and any function in $\cN^1$ can be given in this form.

A Riemannian manifold $(M,g)$ is called {\em asymptotically hyperbolic} 
if there is a compact subset $K \subset M$ and a diffeomorphism 
$\Phi : M \setminus K \to \bH^n \setminus \overline{B}_R$ for which 
$\Phi_* g$ and $b$ are uniformly equivalent on $\bH^n \setminus B_R$ and
\begin{subequations}
\begin{equation} \label{decay1}
\int_{\bH^n \setminus B_R} 
\left( 
| e |^2 + | \nabla^b e |^2
\right) \cosh r \, d\mu^b < \infty, 
\end{equation}
\begin{equation} \label{decay2}
\int_{\bH^n \setminus B_R} |\scal^g + n(n-1)| \cosh r \, d\mu^b < \infty, 
\end{equation}
\end{subequations}
where $e \definedas \Phi_* g - b$ and $r$ is the (hyperbolic) distance
from an arbitrary given point in $\bH^n$. The diffeomorphism $\Phi$
is also called a chart, or a set of coordinates, at infinity.

The linear functional $H_{\Phi}$ on $\cN$ defined by
\[
H_{\Phi} (V) 
= H_{\Phi}^g (V)
= \lim_{r \to \infty} \int_{S_r} \left(
V (\divg^b e- d \tr^b e) + (\tr^b e) dV - e(\nabla^b V, \cdot)
\right) (\nu_r) \, d \mu^b
\] 
is called the {\em mass functional} of $(M,g)$ with respect 
to $\Phi$. Proposition~2.2 of \cite{ChruscielHerzlich} tells us that 
the limit involved in the definition of $H_{\Phi}$ exists and is finite 
when the decay conditions \eqref{decay1}-\eqref{decay2} are satisfied. 
If $\Phi$ is a chart at infinity as above and $A$ is an isometry of the 
hyperbolic metric $b$ then $A \circ \Phi$ is again a chart at infinity 
and it is not complicated to check that
\[
H_{A \circ \Phi} (V) = H_{\Phi} (V \circ A^{-1}) .
\]
If $\Phi_1$, $\Phi_2$ are charts at infinity as above, then 
\cite[Theorem~2.3]{HerzlichMassFormulae} tells us that there is an 
isometry $A$ of $b$ so that $\Phi_2 = A \circ \Phi_1$ modulo lower order 
terms which do not affect the mass functional.

The mass functional $H_{\Phi}$ is timelike future directed if 
$H_{\Phi}(V) > 0$ for all $V \in \cN^+$. In this case the {\em mass} 
of the asymptotically hyperbolic manifold $(M,g)$ is defined by
\[
m^g \definedas \frac{1}{2(n-1)\omega_{n-1}} \inf_{\cN^1} H^g_{\Phi}(V).
\]
Here $\omega_{n-1}$ denotes the volume of the sphere $(S^{n-1}, \sigma)$.
The factor in front of the infimum is such that the mass of the 
space-like slice 
\[
g_{\adss} = 
\frac{d\rho^2}{1+\rho^2 - \frac{2m}{\rho^{n-2}}} + \rho^2 \sigma
\]
of the anti-de~Sitter-Schwarzschild metric is equal to the parameter 
$m$ in the metric. Note that Chru{\'s}ciel and Herzlich 
\cite[(3.5) and (3.6)]{ChruscielHerzlich} define $m^g$ without this factor.
If $H^g_{\Phi}$ is timelike future directed we may replace the coordinates
at infinity $\Phi$ by $A \circ \Phi$ for a suitably chosen isometry $A$ 
so that $m^g = \frac{1}{2(n-1)\omega_{n-1}} H^g_{\Phi} (V_{(0)})$. Coordinates 
with this property are called {\em balanced}.

The positive mass theorem for asymptotically hyperbolic manifolds, 
\cite[Theorem~4.1]{ChruscielHerzlich} and \cite[Theorem~1.1]{WangMass}, 
states that the mass functional is timelike future directed or zero 
for complete asymptotically hyperbolic spin manifolds with scalar 
curvature $\scal \geq -n(n-1)$. In 
\cite[Theorem~1.3]{AnderssonCaiGalloway} the same result is proved 
with the spin assumption replaced by assumptions on the dimension 
and on the geometry at infinity.

\subsection{Conformally hyperbolic metrics}

We now compute the mass functional of a metric $g$ which is 
asymptotically hyperbolic and conformal to the hyperbolic metric in 
the chart at infinity. That is $\Phi_* g = U^\kappa b$ where $U$ is a 
positive function and we set $\kappa \definedas \frac{4}{n-2}$ as we 
do throughout the paper. In this case $e = f b$ where 
$f \definedas U^\kappa - 1$. The metric $g$ is asymptotically 
hyperbolic if $e$ satisifies \eqref{decay1}-\eqref{decay2}, which turns 
into weighted integral conditions on $U$ and its first two derivatives. 
The mass functional becomes 
\[
H^g_{\Phi} (V)
=
(n-1) \lim_{r \to \infty} 
\int_{S_r} \left( f \partial_r V - V \partial_r f \right) \, d \mu^b .
\] 

If $g$ has constant scalar curvature $-n(n-1)$, so that $U$ is a solution 
to the Yamabe equation, it is known from \cite{AnderssonChruscielFriedrich}
that $U$ has the expansion at infinity
\[
U = 1 + \frac{2^n}{n+1} v e^{-nr} + O( e^{-(n+1)r} ) 
\]
in polar coordinates, where $v$ is a function on $S^{n-1}$. Then 
\[
H^g_{\Phi} \left( \sum_{i=0}^n a_i V_{(i)} \right) = 
\frac{4(n-1)}{n-2}
\int_{S^{n-1}} 
\left( a_0 + \sum_{i=1}^n a_i x^i \right) v 
\, d\mu^{\sigma}, 
\]
and in particular we have 
\begin{equation} \label{confmassbalanced}
m^g 
\leq 
\frac{1}{2(n-1)\omega_{n-1}} H^g_{\Phi} (V_{(0)})
= 
\frac{2}{(n-2)\omega_{n-1}} \int_{S^{n-1}} v \, d\mu^{\sigma} 
\end{equation}
where equality holds if $\Phi$ is a balanced chart at infinity.

\section{Asymptotically hyperbolic manifolds with small mass}
\label{secGeneralcase}


In this section, we prove an analog of the main result of 
\cite{LeeNearEquality}. We first introduce the following class of 
asymptotically hyperbolic manifolds.

\begin{definition} \label{defcA}
For $R_0 > 0$ we let $\cA(R_0)$ be the class of 4-tuples
$(M, g, \Phi, U)$ such that
\begin{itemize}
\item 
$(M, g)$ is a complete Riemannian manifold which is asymptotically 
hyperbolic with respect to $\Phi$, where $\Phi$ is a diffeomorphism 
from the exterior of a compact set $K \subset M$ to 
$\bH^n \setminus \overline{B}_{R_0}$;
\item 
$\scal^g \geq -n(n-1)$, and $\scal^g = -n(n-1)$ on $M \setminus K$;
\item
$U$ is a positive function on $\bH^n \setminus B_{R_0}$ such 
that $U \to 1$ at infinity and $\Phi_* g = U^\kappa b$; 
\item
the coordinates at infinity $\Phi$ are balanced;
\item 
the positive mass theorem holds for any asymptotically hyperbolic 
metric on $M$.
\end{itemize}
\end{definition}

We will prove the following theorem concerning the near-equality case 
for the positive mass theorem.

\begin{maintheorem} \label{thmNearPMT1}
Let $R_1 > R_0$ and $\epsilon > 0$. There is a constant $\delta > 0$
so that  
\[
|U-1| \leq \epsilon e^{-nr}
\]
on $\bH^n \setminus B_{R_1}$ for all 
$(M, g, \Phi, U) \in \cA(R_0)$ with $m^g < \delta$.
\end{maintheorem}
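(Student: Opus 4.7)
The plan is to argue by contradiction, following the strategy sketched by the authors. Assume the conclusion fails: there exist $R_1 > R_0$, $\epsilon_0 > 0$, and a sequence $(M_i, g_i, \Phi_i, U_i) \in \cA(R_0)$ with $m^{g_i} \to 0$ while, for some points $p_i \in \bH^n \setminus B_{R_1}$, we have $|U_i(p_i) - 1| > \epsilon_0 \, e^{-n r(p_i)}$. The goal is to manufacture, from such a sequence, a metric in $\cA(R_0)$ with strictly negative mass, contradicting the positive mass theorem that is built into the definition of the class.

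First I would reduce to the case of constant scalar curvature. By Proposition \ref{propAtoA0}, each $g_i$ can be conformally rescaled to a metric $\tilde g_i$ with $\scal^{\tilde g_i} \equiv -n(n-1)$ everywhere, with $|m^{\tilde g_i} - m^{g_i}|$ bounded by an expression controlled by the conformal factors; hence we may assume $\scal^{g_i} = -n(n-1)$ throughout $M_i$. Then $U_i$ solves the hyperbolic Yamabe equation on $\bH^n \setminus \overline{B}_{R_0}$ and, by \cite{AnderssonChruscielFriedrich}, admits the asymptotic expansion
\[
U_i = 1 + \tfrac{2^n}{n+1}\, v_i\, e^{-nr} + O(e^{-(n+1)r}),
\]
with $m^{g_i} = \frac{2}{(n-2)\omega_{n-1}} \int_{S^{n-1}} v_i \, d\mu^\sigma$ by \eqref{confmassbalanced} and the balancing hypothesis.

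Next I would construct a one-parameter family of asymptotically hyperbolic metrics
\[
\lambda_s = (\phi_s)^\kappa \bigl( g - s\, \chi\, T \bigr),
\]
where $T$ is the symmetric two-tensor of Lemma \ref{lmVariationMass} measuring the failure of $(M,g)$ to be static (the refinement of $\tlric$ whose necessity is stressed in the introduction), $\chi$ is a smooth cutoff supported in $\bH^n \setminus \overline{B}_{R_0}$, and $\phi_s$ is the conformal factor normalizing the scalar curvature to $-n(n-1)$. Lemma \ref{lmVariationMass} furnishes an explicit expression for $\frac{d}{ds} H^{\lambda_s}_{\Phi}(V_{(0)})\big|_{s=0}$ as an integral involving $T$ weighted by a static potential on the region where $\chi > 0$. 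If $T$ does not vanish identically, then by choosing the sign of $s$ appropriately this derivative is strictly negative; combined with the hypothesis $m^{g_i} \to 0$, for sufficiently large $i$ and suitable small $s$ one obtains $m^{\lambda_s} < 0$, contradicting PMT for metrics on $M_i$. Consequently $T \equiv 0$ in the conformally hyperbolic region, and via the Yamabe equation this forces $U$ to be close to $1$ in a strong functional sense.

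To upgrade this to the pointwise bound $|U-1| \leq \epsilon e^{-nr}$ on $\bH^n \setminus B_{R_1}$, I would use the a priori bounds on the $U_i$ proved in the first subsection of Section \ref{secGeneralcase} together with standard Schauder estimates for the Yamabe equation to extract a subsequential limit $U_\infty$ in $C^2_{\mathrm{loc}}(\bH^n \setminus \overline{B}_{R_0})$. The limit satisfies the Yamabe equation, has $U_\infty \to 1$ at infinity and vanishing total mass, and by the variational argument applied to the limiting configuration must have vanishing $T$; therefore $U_\infty \equiv 1$, contradicting the pointwise lower bound $|U_i(p_i) - 1| > \epsilon_0 e^{-nr(p_i)}$ (the case $r(p_i) \to \infty$ is handled by expanding the weighted difference and using the vanishing of $v_i$ forced by the argument). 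The principal obstacle is the construction of the tensor $T$: as the authors explain, the naive choice $T = \tlric$ produces a derivative formula whose error terms are controllable only when $\chi \equiv 1$, so the tensor must be engineered to make the derivative computation in Lemma \ref{lmVariationMass} robust under arbitrary cutoffs while still controlling the deviation from staticness.
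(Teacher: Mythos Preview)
Your outline tracks the paper's strategy closely, but there is a genuine gap at the heart of the variational step. You argue: if $T \not\equiv 0$ then $\dot H(0) \neq 0$, so by choosing the sign of $s$ and using $m^{g_i} \to 0$ one gets $m^{\lambda_s} < 0$. This does not work as stated, because $\dot H_i(0)$ depends on $i$ and may itself tend to $0$ along the sequence; without a quantitative lower bound you cannot beat the (small but positive) $H_i(0)$ by a first-order perturbation. The paper's remedy is to use the \emph{second-order} Taylor expansion together with a \emph{uniform} bound $|\ddot H(s)| \leq A$ (Lemma \ref{lmMassLambda}), which yields the quantitative inequality
\[
\dot H_i(0) \;\leq\; \sqrt{2A\,H_i(0)} \;=\; \sqrt{4A(n-1)\omega_{n-1}\,m^{g_i}}.
\]
This is what forces $\int_M \chi V^{g_i}|T_i|^2 \, d\mu^{g_i} \to 0$ along the sequence and hence $T_\infty = 0$ in the limit. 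Your alternative---pass to the limit first and then run the variational argument on $g_\infty$---fails for a reason the paper explicitly flags: the limit metric $g_\infty$ lives only on $\bH^n \setminus B_{R'_0}$ and is not complete, so the positive mass theorem is unavailable for its perturbations $\lambda_s$. The quantitative estimate must be extracted on the complete manifolds $(M_i,g_i)$ \emph{before} taking the limit.

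Two further steps are missing from your sketch. First, having $T_\infty = 0$ only on the support of $\chi$ is not enough; the paper uses real-analyticity of solutions to the Yamabe equation to propagate this to all of $\bH^n \setminus B_{R'_0}$. Second, $T_\infty \equiv 0$ means $g_\infty$ is conformally flat and static, but that alone does not give $U_\infty \equiv 1$: one must invoke the classification in Proposition \ref{propLCFStatic}, which identifies such metrics as anti-de~Sitter--Schwarzschild and then uses the vanishing of the mass to single out the hyperbolic one. Finally, the exponential weight $e^{-nr}$ in the conclusion is not obtained directly from the contradiction argument; the paper first proves $\sup_{\bH^n \setminus B_{R_1}}|U-1| \leq \epsilon$ and then applies Proposition \ref{propUControlledDecay} to upgrade this to $|U-1| \leq C\epsilon\, e^{-nr}$, whereas your handling of the case $r(p_i)\to\infty$ is left vague.
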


We fix once and for all the value of $R_0$ and abbreviate $\cA =
\cA(R_0)$.

\subsection{A priori estimates}

We first prove estimates on the conformal factor $U$ which are valid for
any element of $\cA$.

\begin{lemma} \label{lmUniformBound}
There are positive constants $A$, $A_k$, $k=0,1,\ldots$, such that for
any $(M, g, \Phi, U)$ belonging to the class $\cA$ we have 
\[\begin{aligned}
\frac{1}{A} \leq U &\leq A, \\
\left| \nabla^{(k)}(U-1) \right| &\leq A_k e^{-n r} 
\quad \text{for } k \geq 0,
\end{aligned}\]
on $\bH^n \setminus B_{R_1}$.
\end{lemma}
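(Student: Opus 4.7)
Since $\Phi_* g = U^\kappa b$ and $\scal^g = -n(n-1) = \scal^b$ on $M \setminus K$, the conformal change formula for the scalar curvature shows that $U$ satisfies the Yamabe-type equation
\[
\Delta^b U = \frac{n(n-2)}{4}\left(U^{\frac{n+2}{n-2}} - U\right) \definedas F(U)
\]
on $\bH^n \setminus B_{R_0}$, with $U > 0$ and $U \to 1$ at infinity. Since $F(U) > 0$ precisely when $U > 1$, the maximum principle forces $U \leq 1$ at any interior maximum and $U \geq 1$ at any interior minimum, so every extremum of $U$ is attained either on $\partial B_{R_0}$ or in the limit at infinity. This simple observation does not yet yield uniform control over $\cA$, because the boundary trace $U|_{\partial B_{R_0}}$ is a priori unconstrained.

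The main step is to construct a universal radial supersolution $W^+$ on $(R_0,\infty)$ with $W^+(r) \to \infty$ as $r \downarrow R_0$ and $W^+(r) \to 1$ as $r \to \infty$. In the spirit of Loewner--Nirenberg, let $W_c$ denote the positive radial solution of the associated ODE with $W_c(R_0) = c$ and $W_c \to 1$ at infinity, and set $W^+ \definedas \lim_{c \to \infty} W_c$. The superlinear absorption term $U^{(n+2)/(n-2)}$ acts as a damping mechanism forcing this limit to remain finite for every $r > R_0$, and in particular to be uniformly bounded on $[R_1,\infty)$. The standard comparison principle for semilinear elliptic equations then gives $U \leq W^+$ on $\bH^n \setminus B_{R_0}$, and hence a uniform upper bound $U \leq A$ on $\bH^n \setminus B_{R_1}$ depending only on $R_0$ and $R_1$.

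With the upper bound in hand, rewrite the equation as $\Delta^b U + \gamma(x) U = 0$, where $\gamma(x) = \frac{n(n-2)}{4}(1 - U^{4/(n-2)})$ is uniformly bounded. Since $U > 0$ and $U \to 1$ at infinity, the elliptic Harnack inequality applied on a fixed cover of $\bH^n \setminus B_{R_1}$ by hyperbolic balls, combined with the limit at infinity, produces a uniform lower bound $U \geq 1/A$. Standard interior Schauder estimates for the semilinear equation, applied on domains exhausting $\bH^n \setminus \overline{B}_{R_0}$, then deliver the uniform $C^k$ bounds.

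For the sharper decay $|\nabla^{(k)}(U-1)| \leq A_k e^{-nr}$, note that $F'(1) = n$, and the decaying radial solution of the linearization $\Delta^b u = nu$ at infinity is precisely $e^{-nr}$. I would use the explicit barriers $V_\pm(r) = 1 \pm C e^{-nr}$: a short computation gives
\[
\Delta^b V_\pm - F(V_\pm) = \mp 2n(n-1) C e^{-(n+2)r} + O(C^2 e^{-2nr}),
\]
so for $r$ large and $C > 0$ the function $V_+$ is a supersolution and $V_-$ a subsolution. Using the uniform $C^0$ bounds on a sphere $\partial B_{R_2}$ with $R_2 > R_1$ to calibrate $C$ uniformly in $\cA$, the comparison principle propagates $V_- \leq U \leq V_+$ to all $r \geq R_2$, yielding $|U - 1| \leq C e^{-nr}$. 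Derivative bounds at the same exponential rate follow from Schauder estimates applied to $(U-1) e^{nr}$, which satisfies a linear elliptic equation with uniformly bounded coefficients on $\bH^n \setminus B_{R_2}$. The principal obstacle is the construction of the singular supersolution $W^+$: carrying out the Loewner--Nirenberg-type limit rigorously in the hyperbolic exterior and verifying that the result is finite on every $[R_1,\infty)$ is the technical crux of the argument.
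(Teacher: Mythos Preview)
Your overall strategy---trap $U$ between radial barriers and then bootstrap---is the same as the paper's, but two steps have genuine gaps. First, the Harnack argument for the uniform lower bound does not work as stated: Harnack on a cover of balls relates $\sup$ and $\inf$ on each ball, but chaining these across an unbounded domain gives a bound that degrades exponentially with distance, and ``combined with the limit at infinity'' is not uniform in $\cA$ since you have no a~priori control on how fast each $U$ approaches $1$. Second, the decay barriers $V_\pm = 1 \pm C e^{-nr}$ cannot be calibrated uniformly from the crude bounds $1/A \le U \le A$: matching $V_+(R_2) \ge A$ forces $C \gtrsim (A-1)e^{nR_2}$, and then the error term $O(C^2 e^{-2nr})$ at $r=R_2$ is of order $e^{2R_2}$, swamping the main term $Ce^{-(n+2)r}$. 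The barrier inequality only holds if you already know $|U-1|$ is small (in fact $O(e^{-nR_2})$) on $\partial B_{R_2}$, which is precisely what you are trying to prove.

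The paper closes both gaps in one stroke by using the anti-de~Sitter--Schwarzschild conformal factors $f_m$ as barriers. For a suitable $m_+>0$ the radial solution $U_+ \definedas f_{m_+}$ of the same Yamabe equation blows up on $\partial B_{R_0}$ and satisfies $U_+ = 1 + O(e^{-nr})$; for a suitable $m_-<0$ the solution $U_- \definedas f_{m_-}$ vanishes on $\partial B_{R_0}$ and also satisfies $U_- = 1 + O(e^{-nr})$. A maximum-principle comparison (after the substitution $U = e^\phi$, which linearizes the zeroth-order term) gives $U_- \le U \le U_+$ on $\bH^n\setminus B_{R_0}$, so on $\bH^n\setminus B_{R_1}$ one obtains simultaneously the two-sided $L^\infty$ bound \emph{and} the decay $|U-1|\le A_0 e^{-nr}$, with constants depending only on $R_0, R_1$. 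The derivative estimates then follow by elliptic regularity on unit balls exactly as you say. Note that your Loewner--Nirenberg limit $W^+$ is precisely this $U_+$: the ``technical crux'' you flag is resolved not by an abstract limit argument but by an explicit model metric, and the missing ingredient in your proof is its negative-mass counterpart $U_-$, which supplies the uniform lower barrier tending to $1$ at the correct rate.
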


Note that these estimates are specific to the case of asymptotically
hyperbolic geometry. In the Euclidean context they cannot be true due to
the fact that the Yamabe equation (which is then the Laplace equation)
is linear.

\begin{proof}
The assumption on the scalar curvature of $\Phi_* g = U^\kappa b$ on 
$\bH^n \setminus B_{R_0}$ implies that $U$ solves the Yamabe equation
\begin{equation} \label{eqYamabeb}
-\frac{4(n-1)}{n-2} \Delta^b U - n(n-1) U = -n (n-1) U^{\kappa+1}
\end{equation}
on $\bH^n \setminus B_{R_0}$. From Propositions \ref{propAdSSchPos1} and
\ref{propAdSSchPos2} we know that there exists a solution $U_+$ 
of Equation \eqref{eqYamabeb} on $\bH^n \setminus \overline{B}_{R_0}$ 
such that $U_+ = 1 + O(e^{-n r})$ at infinity and $U_+ \to \infty$ on 
$\partial B_{R_0}$. Now the same argument as in 
\cite[Proposition 3.6]{GicquaudSakovich} can be used to show that
$U \leq U_+$. Namely, the substitution $U = e^\phi$ brings Equation
\eqref{eqYamabeb} into the form 
\[
-\frac{4(n-1)}{n-2} 
\left(\Delta^b \phi + |d\phi|_b^2\right) - n(n-1)
= -n(n-1)e^{\kappa\phi}.
\]
Subtracting the respective equations for $\phi_+$ and $\phi$ gives
\[
-\frac{4(n-1)}{n-2}\left(\Delta^b (\phi_+-\phi) 
+ \langle d(\phi_+-\phi),d(\phi_++\phi)\rangle_b\right)
+n(n-1)\left(e^{\kappa \phi_+}-e^{\kappa \phi}\right) = 0,
\]
and from the standard maximum principle we conclude that
$\phi_+ \geq \phi$, hence $U_+\geq U$. Similarly, from Proposition
\ref{propAdSSchNeg}, there exists a function $U_-$ such that $U_-$
solves Equation \eqref{eqYamabeb}, $U_- = 1 + O(e^{-n r})$ at infinity,
and $U_- = 0$ on $\partial B_{R_0}$. From the maximum principle we also
conclude that $U_- \leq U$.

We can now finish the proof of the lemma. The existence of the constants 
$A$ and $A_0$ follows from the fact that $U_- \leq U \leq U_+$ on 
$\bH^n \setminus B_{R_1}$. Finally, since $u =U - 1$ satisfies
\[
-\frac{4(n-1)}{n-2}\Delta^b u 
= -n(n-1) \left((1+u)^{\kappa+1}-1\right) + n(n-1)u
\]
we can apply elliptic regularity in balls of fixed radius as above and 
combine with standard bootstrap arguments to get the existence of 
constants $A_k$ for $k \geq 1$.
\end{proof}

From the estimates in Lemma \ref{lmUniformBound} together with 
\eqref{confmassbalanced} we conclude that the mass of the elements 
of $\cA$ is uniformly bounded.

\begin{corollary} 
There exists a constant $C = C(R_0)$ such that for all elements
$(M, g, \Phi, U)$ belonging to the class $\cA(R_0)$, the mass 
satisfies $m^g \leq C$.
\end{corollary}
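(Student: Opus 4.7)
The plan is to combine the a priori pointwise bound on the conformal factor from Lemma \ref{lmUniformBound} with the formula for the mass of a conformally hyperbolic metric with constant scalar curvature $-n(n-1)$ given in \eqref{confmassbalanced}. Since by Definition \ref{defcA} the chart $\Phi$ is balanced and $\scal^g = -n(n-1)$ holds on $M \setminus K$, the conformal factor $U$ solves the Yamabe equation on $\bH^n \setminus B_{R_0}$ and therefore admits the asymptotic expansion
\[
U = 1 + \frac{2^n}{n+1} v e^{-nr} + O(e^{-(n+1)r}),
\]
for some function $v \in C^{\infty}(S^{n-1})$, as recalled after \eqref{eqYamabeb} from \cite{AnderssonChruscielFriedrich}. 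Because $\Phi$ is balanced, \eqref{confmassbalanced} yields the exact equality
\[
m^g = \frac{2}{(n-2)\omega_{n-1}} \int_{S^{n-1}} v \, d\mu^{\sigma}.
\]

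First I would extract the function $v$ as a limit from the asymptotic expansion, namely
\[
v = \frac{n+1}{2^n} \lim_{r \to \infty} e^{n r}(U - 1),
\]
the limit being uniform on $S^{n-1}$. The decay estimate $|U - 1| \leq A_0 e^{-n r}$ from Lemma \ref{lmUniformBound}, valid on $\bH^n \setminus B_{R_1}$, then translates directly into the pointwise bound $|v| \leq \frac{(n+1) A_0}{2^n}$ on $S^{n-1}$, with a constant depending only on $R_0$ (through $R_1$ and the universal constant $A_0 = A_0(R_0)$ produced by Lemma \ref{lmUniformBound}).

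Integrating over $S^{n-1}$ and substituting into the expression for $m^g$ above yields
\[
m^g \leq \frac{2}{(n-2)\omega_{n-1}} \int_{S^{n-1}} |v| \, d\mu^{\sigma}
\leq \frac{(n+1) A_0}{(n-2) 2^{n-1}} \definedas C(R_0),
\]
which is the desired uniform bound. The only point to be slightly careful about is the extraction of $v$ from the pointwise decay of $U-1$; this is immediate from the expansion, but could alternatively be carried out using the $C^k$-estimates from Lemma \ref{lmUniformBound} to legitimately pass the limit under the integral if one prefers to avoid appealing to the expansion directly.
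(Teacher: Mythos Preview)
Your proof is correct and follows exactly the approach indicated by the paper, which simply says to combine the estimates of Lemma \ref{lmUniformBound} with \eqref{confmassbalanced}; you have merely made explicit the extraction of $v$ from the asymptotic expansion and the resulting pointwise bound $|v| \leq \frac{(n+1)A_0}{2^n}$.
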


The exponential decay stated in Theorem \ref{thmNearPMT1} will 
follow from the next proposition. 

\begin{proposition} \label{propUControlledDecay}
Let $R_1 > R_0$ be a fixed radius. There exists a constant $C>0$
such that for any $(M, g, \Phi, U)$ in the class $\cA$ we have 
\begin{equation} \label{eqUControlledDecay}
|U-1| \leq C \left( \sup_{\bH^n \setminus B_{R_1}} |U-1| \right) e^{-nr}
\end{equation}
on $\bH^n \setminus B_{R_1}$.
\end{proposition}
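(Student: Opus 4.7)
The proof is by a barrier (comparison-principle) argument based on the Yamabe equation satisfied by $U$:
\[
-\Delta^b U \;=\; F(U), \qquad F(U) \;:=\; \tfrac{n(n-2)}{4}\bigl(U - U^{\kappa+1}\bigr),
\]
whose right-hand side satisfies $F(1) = 0$, $F'(1) = -n$, $F''(U) \leq 0$ for $U > 0$, and the sign relation $F(U)(U-1) \leq 0$.

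The first step is to exploit the sign relation $F(U)(U-1) \leq 0$ together with the strong maximum principle applied to this equation, which forbids $u := U-1$ from having a strictly positive interior maximum or a strictly negative interior minimum on $\bH^n \setminus B_{R_1}$. Since $u\to 0$ at infinity, this forces $M := \sup_{\bH^n \setminus B_{R_1}}|u| = \sup_{\partial B_{R_1}} |u|$, so it suffices to compare $U$ against barriers pinned to the value $M$ on $\partial B_{R_1}$.

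The heart of the argument is the explicit super-solution identity
\[
(-\Delta^b + n)\, e^{-nr} \;=\; n(n-1)\bigl(\coth r - 1\bigr)\, e^{-nr} \;\geq\; 0
\]
for the linearization of the Yamabe equation at $U = 1$. Combined with the concavity bound $F(1+w) \leq -nw$ for $w \geq 0$ (immediate from $F'' \leq 0$), this shows that $\psi_+ := 1 + CMe^{-nr}$ is a super-solution of $-\Delta^b V = F(V)$ for any $C > 0$. The choice $C \geq e^{nR_1}$ arranges $\psi_+ \geq U$ on $\partial B_{R_1}$, and $\psi_+, U \to 1$ at infinity. A classical comparison principle -- valid because $F'(U) < 0$ in a neighborhood of $U = 1$ which contains both $U$ and $\psi_+$ in the asymptotic region (thanks to Lemma \ref{lmUniformBound}) -- then yields $U \leq \psi_+$, i.e.\ $u \leq CMe^{-nr}$. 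The analogous sub-solution argument with $\psi_- := 1 - CMe^{-nr}$ provides the matching lower bound $u \geq -CMe^{-nr}$.

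The main obstacle is verifying the sub-solution property for $\psi_-$: when $w = -CMe^{-nr} < 0$ the linear super-solution term $n(n-1)(\coth r - 1)\, w$ is \emph{negative}, while the quadratic correction $-G(w) \geq 0$ (with $G(w) := F(1+w) + nw \leq 0$) enters with the \emph{opposite} sign, so the required inequality $-\Delta^b \psi_- \leq F(\psi_-)$ is not automatic. It does hold provided one chooses $C$ in an interval $[e^{nR_1},\, K e^{2nR_1}]$, whose nonemptiness for $n \geq 3$ is a quantitative consequence of the a priori estimate $M \leq A_0 e^{-nR_1}$ of Lemma \ref{lmUniformBound} together with the decay $\coth r - 1 \sim 2 e^{-2r}$. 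On a possible inner annulus $\{R_1 \leq r \leq R^*\}$ where $U$ might leave the region where $F' < 0$ (so the comparison principle fails there), the desired estimate is trivial since $|u| \leq M \leq Me^{n(R^* - r)}\cdot e^{-nr}$; patching this with the outer barrier estimate completes the proof of \eqref{eqUControlledDecay}.
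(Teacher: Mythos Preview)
Your approach is genuinely different from the paper's. The paper uses the anti-de~Sitter--Schwarzschild conformal factors $f_m$ (exact radial solutions of the Yamabe equation) as barriers: one picks $\underline m,\overline m$ so that $f_{\underline m}(R_1)=\inf_{\partial B_{R_1}}U$ and $f_{\overline m}(R_1)=\sup_{\partial B_{R_1}}U$, obtains $f_{\underline m}\le U\le f_{\overline m}$ from the maximum principle (via the substitution $U=e^\phi$, as in Lemma~\ref{lmUniformBound}), and then combines the two estimates $|f_m-1|\le C|m|e^{-nr}$ and $|m|\le C|f_m(R_1)-1|$ coming from the ODE analysis in Appendix~\ref{secAdS}. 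Because the $f_m$ are exact solutions, no sub-/super-solution inequality ever needs to be checked; the price is the detailed study of the AdSS family. Your route with the explicit barriers $1\pm CMe^{-nr}$ is more elementary but forces you to confront the nonlinearity head-on.

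The upper barrier $\psi_+$ works exactly as you say: concavity of $F$ gives $F(1+w)\le -nw$ for all $w$, and together with $(-\Delta^b+n)e^{-nr}\ge 0$ this yields $-\Delta^b\psi_+\ge F(\psi_+)$ with no further hypothesis.

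The lower barrier, however, has a real gap. Unwinding $-\Delta^b\psi_-\le F(\psi_-)$ with $\psi_-=1-CMe^{-nr}$ gives the requirement
\[
CM\,n(n-1)(\coth r-1)\,e^{-nr}\;\ge\;|G(-CMe^{-nr})|,
\]
and since $|G(s)|\asymp s^2$ near $0$ while $\coth r-1\asymp 2e^{-2r}$, this amounts to $CM\lesssim e^{(n-2)r}$ for all $r\ge R_1$, hence $CM\lesssim e^{(n-2)R_1}$. Combined with the boundary requirement $C\ge e^{nR_1}$ this forces $M\lesssim e^{-2R_1}$. You invoke $M\le A_0e^{-nR_1}$, but that yields $M\lesssim e^{-2R_1}$ only if $A_0\lesssim e^{(n-2)R_1}$, a relation between the universal constants that is not guaranteed for every fixed $R_1>R_0$. (Your stated interval $[e^{nR_1},Ke^{2nR_1}]$ is nonempty for trivial reasons, so it cannot be encoding the actual constraint.) The remedy is precisely the patching manoeuvre of your last sentence: choose $R^*\ge R_1$ large enough---depending only on the universal constant $A_0$---so that both the sub-solution inequality and the condition $F'<0$ hold on $\{r\ge R^*\}$, run the barrier argument there, and use the trivial bound $|u|\le M\le Me^{nR^*}e^{-nr}$ on the annulus $\{R_1\le r\le R^*\}$. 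You already have this ingredient; you have merely misdiagnosed which difficulty it is resolving.
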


\begin{proof}
In Appendix \ref{secAdS} we have described the solutions $f_m$ of 
\eqref{eqYamabeb} corresponding to anti-de~Sitter-Schwarzschild metrics
of mass $m$. For appropriate choice of $m_- < 0 < m_+$ we have that 
$f_{m_+}$ and $f_{m_-}$ solve \eqref{eqYamabeb} on $\bH^n \setminus B_{R_0}$
with $f_{m_+} \to \infty$ on $\partial B_{R_0}$ and $f_{m_-} = 0$ on
$\partial B_{R_0}$. From the proof of Lemma \ref{lmUniformBound} we know 
that $f_{m_-} \leq U \leq f_{m_+}$ on $\bH^n \setminus B_{R_0}$. 

Let $0 \leq m \leq m_+$. Then $f_m$ such that $1 \leq f_m \leq f_{m_+}$ 
is defined for $r \geq R_1$, see Appendix \ref{secAdS} for details. From 
the proof of Proposition \ref{propAdSSchPos1} we know that 
$0 \leq f_m - 1 \leq Cme^{-nr}$ for
$r \geq r_1(m) \definedas \max\{R_1, r((2 m)^{1/n})\}$. It is not
complicated to extend this estimate to the whole interval 
$r \geq R_1$. Indeed, let $\mu>0$ be such that $R_1 = r((2\mu)^{1/n})$. 
If $0 \leq m \leq \mu$ then we
have $r_1(\mu)=R_1$, hence the estimate already holds for $r \geq R_1$.
Therefore it suffices to consider the case $\mu < m \leq m_+$ which
corresponds to the situation $r_1(m)> R_1$. Since $f_m$ is decreasing we
have $f_m-1\leq f_m(R_1)-1\leq f_{m_+}(R_1)-1$ on $R_1\leq r \leq r_1(m)$,
whereas $me^{-nr}\geq \mu e^{-n r_1(m)}\geq \mu e^{-nr_1(m_+)}$ on this
interval. It is now clear that up to increasing $C$ if necessary, we
can assume that the inequality $0 \leq f_m-1 \leq Cme^{-nr}$ holds for
$r\geq R_1$. In the rest of the proof, the constant $C>0$ might vary 
from line to line but remains independent of $m$.

Using Proposition \ref{propAdSSchNeg} we can similarly prove that the
inequality $Cme^{-nr} \leq f_m-1 \leq 0$ holds for $r\geq R_1$ in the
case when $m_-\leq m \leq 0$. This yields 
\[
|f_m-1| \leq C|m|e^{-nr} 
\]
for $m_- \leq m \leq m_+$ and $r\geq R_1$. Let us now choose 
$\underline{m},\overline{m} \in (m_-, m_+)$ so that 
$f_{\underline{m}}(R_1)=\inf_{\partial B_{R_1}}U$ and
$f_{\overline{m}}(R_1)=\sup _{\partial B_{R_1}}U$. Again, the use of the
maximum principle as in the proof of Proposition \ref{propAdSSchPos1}
yields $f_{\underline{m}}\leq U \leq f_{\overline{m}}$ on
$\bH^n\setminus B_{R_1}$. Consequently, we have the estimate 
\[
|U-1| \leq 
C \max \left\{ |\underline{m}|,|\overline{m}| \right\} e^{-nr} 
\]
on $\bH^n \setminus B_{R_1}$.

With all these preliminaries at hand, \eqref{eqUControlledDecay} is a
simple consequence of the fact that there exists a constant $C>0$ such
that
\begin{equation}\label{eqMassControl}
|m|\leq C|f_m(R_1)-1| \hspace{0.4cm} \text{ for }
\hspace{0.4cm} m_-\leq m \leq m_+.
\end{equation}
Indeed, if we assume that this estimate holds, then 
\[ \begin{split}
|U-1|
&\leq 
C \max \left\{
\left| f_{\underline{m}}(R_1)-1 \right|, 
\left| f_{\overline{m}}(R_1)-1 \right| 
\right\}e^{-nr}\\
&=
C \max \left\{
\left| \inf_{\partial B_{R_1}} U-1 \right|, 
\left| \sup_{\partial B_{R_1}} U-1 \right|
\right\}e^{-nr}\\
&=
C \max 
\left\{
\left| \inf_{\partial B_{R_1}} (U-1) \right|, 
\left| \sup_{\partial B_{R_1}} (U-1) \right| 
\right\}e^{-nr}\\
&\leq 
C \left( \sup_{\partial B_{R_1}} |U-1| \right) e^{-nr}\\
&\leq 
C \left( \sup_{\bH^n\setminus B_{R_1}} |U-1| \right) e^{-nr}.
\end{split} \]
Consequently, in order to complete the proof, we only need to prove
\eqref{eqMassControl}. In fact, \eqref{eqMassControl} will follow  
from the monotonicity of $f_m$ if we show that
\begin{equation}\label{eqControlM}
|m| \leq C|f_m(R_2)-1| \hspace{0.4cm} \text{ for }
\hspace{0.4cm} m_-\leq m \leq m_+,
\end{equation}
for some $R_2>R_1$. We fix $R_2 > \max\{r_0(m_+),R_1\}$ and set
$x \definedas f_m(R_2)$. It is clear that
$f_{m_-}(R_2)\leq x \leq f_{m_+}(R_2)$ for $m_-\leq m \leq m_+$, and that
$r^{-1}(R_2)=x^{\frac{2}{n-2}}\sinh R_2>a(m)$. Then \eqref{eqRelationRhoR}
yields
\[
\int_{x^{\frac{2}{n-2}}\sinh R_2}^\infty
\frac{d\rho}{\rho\sqrt{1+\rho^2 - \frac{2m}{\rho^{n-2}}}}
= \int_{R_2}^\infty \frac{dr}{\sinh r}.
\]
We define
\[
F(x,m) \definedas 
\int_{x^{\frac{2}{n-2}}\sinh R_2}^\infty
\frac{d\rho}{\rho\sqrt{1+\rho^2 - \frac{2m}{\rho^{n-2}}}},
\] 
where $f_{m_-}(R_2)\leq x \leq f_{m_+}(R_2)$, $m_-\leq m \leq m_+$.
It is straightforward to check that 
\[\begin{split}
\frac{\partial F}{\partial m}
&=
\int_{x^{\frac{2}{n-2}}\sinh R_2}^\infty 
\frac{d\rho}{2\rho^{n-1}\left(1+\rho^2-\frac{2m}{\rho^{n-2}}\right)^{3/2}}\\
&\geq 
\int_{f_{m_+(R_2)}^{\frac{2}{n-2}}\sinh R_2}^\infty 
\frac{d\rho}{2\rho^{n-1}\left(1+\rho^2-\frac{2m_-}{\rho^{n-2}}\right)^{3/2}}
\end{split}\] 
is positive and uniformly bounded away from zero, and that
\[
\frac{\partial F}{\partial x} 
= 
-\frac{2}{(n-2)x\sqrt{1+x^{\frac{4}{n-2}}(\sinh R_2)^2
-\frac{2m}{x^2 (\sinh R_2)^{n-2}}}}
\]
is uniformly bounded. We conclude that there exists $C>0$ such that 
$|m'(x)|<C$ for $x\in (f_{m_-}(R_2), f_{m_+}(R_2))$. Finally, applying the 
mean value theorem we arrive at \eqref{eqControlM} and thus 
\eqref{eqMassControl} follows.
\end{proof}

\begin{corollary}  
There exists a radius $R_2 > R_1$ such that for $(M, g, \Phi, U) \in \cA$
the function $|U-1|$ reaches its maximum over 
$\bH^n \setminus B_{R_1}$ in the annulus 
$A_{R_1, R_2} = \overline{B}_{R_2} \setminus B_{R_1}$.
\end{corollary}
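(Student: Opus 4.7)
The plan is to use Proposition \ref{propUControlledDecay} directly: the exponential pointwise bound forces $|U-1|$ to be strictly smaller than its supremum once $r$ is large enough, so the supremum must be attained on a compact annulus.

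More precisely, let $S \definedas \sup_{\bH^n \setminus B_{R_1}} |U-1|$, which is finite by Lemma \ref{lmUniformBound}. From Proposition \ref{propUControlledDecay} we have
\[
|U(p)-1| \leq C S e^{-n r(p)}
\]
for every $p \in \bH^n \setminus B_{R_1}$, with $C$ independent of the element of $\cA$. Choose $R_2 > R_1$ large enough so that $C e^{-n R_2} \leq \tfrac{1}{2}$; this $R_2$ depends only on $C$ and $R_1$, hence is uniform over $\cA$. Then for every $p$ with $r(p) \geq R_2$ we get $|U(p)-1| \leq \tfrac{S}{2}$, so the supremum of $|U-1|$ over $\bH^n \setminus B_{R_2}$ cannot exceed $S/2 < S$ (the case $S=0$ being trivial since then $U\equiv 1$ and any point realizes the maximum).

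Consequently $S$ must equal the supremum of the continuous function $|U-1|$ over the compact annulus $A_{R_1,R_2} = \overline{B}_{R_2} \setminus B_{R_1}$, where this supremum is attained. This gives the desired $R_2$.

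There is no real obstacle here: the only subtlety is to ensure that $R_2$ can be chosen independently of $(M,g,\Phi,U) \in \cA$, but this follows from the uniformity of the constant $C$ in Proposition \ref{propUControlledDecay}.
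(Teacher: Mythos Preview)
Your proof is correct and follows essentially the same route as the paper: both use the uniform decay estimate of Proposition~\ref{propUControlledDecay} to choose $R_2$ so that $Ce^{-nR_2}$ is at most a fixed constant (the paper takes $1$, you take $\tfrac{1}{2}$), forcing the supremum to be realized on the compact annulus. Your version is slightly more explicit in handling the trivial case $S=0$ and in invoking compactness to guarantee that the supremum is actually attained.
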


\begin{proof}
Choose $R_2$ such that $C e^{-n R_2} \leq 1$. Then for any point such 
that $r > R_2$ we have 
\[
|U-1| 
\leq C \left( \sup_{\bH^n \setminus B_{R_1}} |U-1| \right) e^{-nr} 
< \sup_{\bH^n \setminus B_{R_1}} |U-1|.
\]
\end{proof}

\subsection{Strategy of the proof of Theorem \ref{thmNearPMT1}}
\label{secStrategy}

In this subsection we discuss the main strategy of the proof of Theorem
\ref{thmNearPMT1}, deferring the proof of technical details to the next 
subsections.

The first step is to reduce the proof of Theorem \ref{thmNearPMT1} to the 
particular case of metrics with constant scalar curvature 
$\scal^g = -n(n-1)$. For this we show that the conformal factor 
transforming the metric $g$ to a metric with constant scalar curvature
can be uniformly controlled on $\bH^n \setminus B_{R_1}$ by the difference
between the masses (more exactly of the time components
$H_{\Phi}(V_{(0)})$ of the mass functional) of the two metrics. This is
the content of the following proposition.

\begin{proposition} \label{propAtoA0}
Given $(M, g, \Phi, U) \in \cA$, there exists a unique positive 
function $w$ on $M$ such that $\gtil \definedas w^{\kappa} g$ is 
asymptotically hyperbolic with constant scalar curvature 
$\scal^{\gtil} = -n(n-1)$. 
The metric $\gtil$ has mass $m^{\gtil} \leq m^g$. Further, for 
$p > n/2$ there is a constant $C > 0$ independent of 
$(M, g, \Phi, U)$ such that 
\[
\sup_{\bH^n \setminus B_{R_1}} \left| U - \Util \right| 
\leq C \left( m^g - m^{\gtil} \right)^{1/p},
\]
where $\Util \definedas U w$.
\end{proposition}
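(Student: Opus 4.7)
The plan is to address the three claims of the proposition in turn. For the existence and uniqueness of $w$, the requirement $\scal^{\gtil} = -n(n-1)$ with $\gtil = w^\kappa g$ is equivalent to the Yamabe-type equation
\[
-\tfrac{4(n-1)}{n-2}\Delta^g w + \scal^g w = -n(n-1) w^{\kappa+1}, \qquad w > 0, \quad w \to 1 \text{ at infinity}.
\]
Since $\scal^g \geq -n(n-1)$, the constant $w \equiv 1$ is a super-solution. A positive sub-solution tending to $1$ at infinity will be furnished by anti-de~Sitter-Schwarzschild-type barriers in the spirit of Lemma \ref{lmUniformBound}, and monotone iteration then yields a smooth solution with $0 < w \leq 1$. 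Uniqueness will follow from the maximum principle applied to the logarithmic difference of two solutions, again as in the proof of Lemma \ref{lmUniformBound}.

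For the mass inequality, the key observation is that $\Util \definedas Uw$ is the conformal factor of $\gtil$ with respect to $b$ on the end, and both $U$ and $\Util$ solve the Yamabe equation \eqref{eqYamabeb} there. By \cite{AnderssonChruscielFriedrich}, each admits an asymptotic expansion of the form $1 + \frac{2^n}{n+1} v \, e^{-nr} + O(e^{-(n+1)r})$, with leading coefficients $v$ (for $U$) and $\tilde v$ (for $\Util$), related through $\Util = Uw$ by $\tilde v = v + v_w$, where $v_w$ is the leading coefficient of $w - 1$. Because $w \leq 1$, one has $v_w \leq 0$ pointwise on $S^{n-1}$. Combining \eqref{confmassbalanced} with the fact that $\Phi$ is balanced for $g$ yields
\[
m^{\gtil} \leq m^g + \tfrac{2}{(n-2)\omega_{n-1}} \int_{S^{n-1}} v_w \, d\mu^\sigma \leq m^g,
\]
and more precisely the quantitative control $\|v_w\|_{L^1(S^{n-1})} \leq C(m^g - m^{\gtil})$.

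For the $L^\infty$ estimate on $U - \Util = -U(w - 1)$, the uniform bound on $U$ from Lemma \ref{lmUniformBound} reduces the problem to bounding $\|w - 1\|_{L^\infty(\bH^n \setminus B_{R_1})}$. Applying Proposition \ref{propUControlledDecay} to $\Util$ and to $U$ then further reduces it to bounding $\|w - 1\|_{L^\infty(A)}$ on a fixed compact annulus $A = A_{R_1, R_2}$. On a slightly larger annulus $A'$, the function $w - 1$ satisfies a uniformly elliptic semilinear equation with uniformly bounded coefficients, so standard elliptic $L^p$--$L^\infty$ estimates give $\|w - 1\|_{L^\infty(A)} \leq C \|w - 1\|_{L^p(A')}$ for any $p > n/2$. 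Since $|w - 1|$ is uniformly bounded, interpolation yields $\|w - 1\|_{L^p(A')}^p \leq C \|w - 1\|_{L^1(A')}$, reducing everything to the bound $\|w - 1\|_{L^1(A')} \leq C(m^g - m^{\gtil})$. The hard part will be establishing this last inequality, which I plan to derive by combining the $L^1$ control on $v_w$ from the previous step with an integration-by-parts identity --- using the static potential $V_{(0)} = \cosh r$ as a test function against the linearized Yamabe operator on the end --- that represents $w - 1$ on a compact region through its asymptotic data at infinity.
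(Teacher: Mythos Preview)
Your treatment of existence, uniqueness, and the inequality $m^{\gtil}\le m^g$ is essentially the paper's, and the observation $\|v_w\|_{L^1(S^{n-1})}\le C(m^g-m^{\gtil})$ is correct and matches the paper's use of~\eqref{confmassbalanced}. The reduction $\|U-\Util\|_{L^\infty(A)}\le C\|U-\Util\|_{L^p(A')}^{}\le C\|U-\Util\|_{L^1(A')}^{1/p}$ via elliptic estimates and interpolation is also fine and parallels the paper.

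Two steps, however, do not go through as written. First, Proposition~\ref{propUControlledDecay} applied separately to $U$ and $\Util$ only gives $|U-1|,|\Util-1|\le Ce^{-nr}$ with constants depending on $\sup|U-1|$ and $\sup|\Util-1|$, not on $\sup_A|U-\Util|$; it does not reduce the global sup-bound to a compact annulus. The paper instead applies the maximum principle to $\log U-\log\Util$ (which is nonnegative, tends to zero at infinity, and satisfies an equation with no zeroth-order term of the wrong sign) to force the supremum onto $\partial B_{R_1}$.

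Second, and more seriously, the proposed $L^1$ estimate via testing against $V_{(0)}=\cosh r$ does not close. Since $\Delta^b\cosh r=n\cosh r$, the function $V_{(0)}$ lies in the kernel of the linearized operator $\Delta^b-n$, so the Green identity over $\bH^n\setminus B_R$ collapses the linear part entirely: after expanding $(1+u)^{\kappa+1}-(1+\util)^{\kappa+1}=(\kappa+1)(u-\util)+Q$, the volume integral is $\tfrac{n(n-2)}{4}\int V_{(0)}Q$, a purely quadratic term, and the identity reads
\[
\tfrac{n(n-2)}{4}\int_{\bH^n\setminus B_R} V_{(0)}\,Q \;=\; -\int_{S^{n-1}}v_w\,d\mu^\sigma\;-\;\int_{\partial B_R}\bigl[V_{(0)}\,\partial_\nu(u-\util)-(u-\util)\,\partial_\nu V_{(0)}\bigr].
\]
The left side does not dominate $\int_{A'}(u-\util)$, and the inner boundary term on $\partial B_R$ is exactly the quantity you are trying to bound; the argument is circular. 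The paper's key device is to replace $\cosh r$ by the \emph{decaying} solution $u_0(r)=\cosh r\int_r^\infty(\cosh^2\tau\sinh^{n-1}\tau)^{-1}d\tau$ of the radial linearized ODE and to set $v=u/u_0$. Lemma~\ref{lmIntv} then yields a representation of $\int_{S^{n-1}}v(s)\,d\mu^\sigma$ as the mass plus an integral over $[s,\infty)$ of the nonlinearity $f(u)$, with \emph{no} inner boundary term; subtracting the corresponding identity for $\vtil$ and applying a Gronwall argument to absorb the nonlinear remainder gives $\int_{S^{n-1}}(v-\vtil)(s)\le C(m^g-m^{\gtil})$ for every $s$, whence the $L^1$ bound on the annulus. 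This substitution---not integration against $\cosh r$---is the missing idea.
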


This reduction turns out to be convenient for obtaining estimates in the
second part of the proof. We introduce the restricted class $\cA_0(R_0)$
of 4-tuples $(M, g, \Phi, U) \in \cA$ such that $\scal^g = -n(n-1)$ on all
of $M$. To prove Theorem \ref{thmNearPMT1} we need to show the result
for elements of $\cA_0 = \cA_0(R_0)$.

The basic idea is to apply the positive mass theorem to a certain 
1-parameter family of metrics. To define it, we first modify the metric 
$g$ in an annulus (see Equation \eqref{eqDefGs}) and conformally transform 
it to fulfill the assumption $\scal \geq -n(n-1)$ of the positive mass 
theorem.

In the first lemma we prove the existence of a function $V$ which solves
$\Delta^g V = n V$ and which is asymptotic to $V_{(0)}$. For functions $V_1$ and 
$V_2$ on $M$ we write $V_1 \sim V_2$ if $V_1/V_2$ tends to $1$ at infinity.

Let $R'_0, R''_0, R'_1$ and $R''_1$ be constants such that
\[
R_0 < R'_0 < R''_0 < R_1 < R'_1 < R''_1.
\]
We remind the reader that $r$ denotes the distance function from the 
chosen origin in $\bH^n$.

\begin{lemma} \label{lmBoundV}
Let $(M, g, \Phi, U) \in \cA$. There exists a unique solution 
$V^g$ to the equation
\begin{equation} \label{eqEigenVect}
\Delta^g V = n V
\end{equation}
such that $V^g \sim V_{(0)}$. Further, there exist universal functions 
\[
V_{\pm} : \bH^n \setminus B_{R'_0} \to \bR
\]
such that for some constants $C_0, C_1 > 0$ we have
\[
\left| V_{\pm} - V_{(0)} \right| \leq C_0 e^{-(n-1) r},
\]
\[V_- \leq V^g \leq V_+,\]
and
\[\left| dV^g - dV_{(0)} \right|_g \leq C_1 e^{-(n-1) r}\]
on $\bH^n \setminus B_{R'_0}$. Also, there are constants
$B_2, B_3, \ldots$ depending only on $R'_0, R''_0$ and $R''_1$ such that
for any integer $k \geq 2$ we have
\[
\left| \nabla^{(k)} V^g \right| \leq B_k \text{ on } A_{R''_0, R''_1}.
\]
\end{lemma}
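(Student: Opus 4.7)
The plan is to work on the end $\bH^n \setminus \overline{B}_{R_0}$, where $g = U^\kappa b$, and exploit the conformal structure to construct universal barriers. Applying the standard conformal change formula $\Delta^g f = U^{-\kappa}\bigl[\Delta^b f + \tfrac{2}{U}\langle \nabla^b U, \nabla^b f\rangle_b\bigr]$ together with the identity $\Delta^b V_{(0)} = n V_{(0)}$ gives
\[
(\Delta^g - n) V_{(0)} = n(U^{-\kappa} - 1) V_{(0)} + 2 U^{-\kappa-1}\langle \nabla^b U, \nabla^b V_{(0)}\rangle_b.
\]
The uniform estimates of Lemma \ref{lmUniformBound} (with $|U-1|$ and $|\nabla^b U|_b$ both $O(e^{-nr})$) combined with the explicit form $V_{(0)} = \cosh r$ then yield a universal bound $|(\Delta^g - n) V_{(0)}| \leq D\, e^{-(n-1)r}$ on $\bH^n \setminus \overline{B}_{R_0}$.

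Next I would take $V_\pm \definedas V_{(0)} \pm C e^{-(n-1)r}$ as barriers. A direct computation on hyperbolic space gives $(\Delta^b - n) e^{-(n-1)r} = -\bigl[n + \tfrac{2(n-1)^2}{e^{2r}-1}\bigr] e^{-(n-1)r} \leq - n e^{-(n-1)r}$, and the same conformal-error analysis shows that the correction $\pm C e^{-(n-1)r}$ contributes only lower-order terms to the conformal defect. Choosing $C$ with $n C > D$ and then $R'_0$ large enough both to absorb the remaining lower-order corrections and to ensure $V_- > 0$ produces universal super- and sub-solutions, $(\Delta^g - n) V_+ \leq 0$ and $(\Delta^g - n) V_- \geq 0$ on $\bH^n \setminus B_{R'_0}$, satisfying $|V_\pm - V_{(0)}| \leq C_0 e^{-(n-1)r}$ as claimed.

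With barriers in hand, existence, uniqueness, and the bound $V_- \leq V^g \leq V_+$ follow from the maximum principle, since the zero-order term of $\Delta^g - n$ is the negative constant $-n$ (there is no non-negative interior maximum and no non-positive interior minimum). For existence I would extend $V_\pm$ across $K$ by smoothly gluing them to suitable large (resp.\ small) positive constants inside a slightly larger compact region — constants are super- or sub-solutions depending on sign, since $(\Delta^g - n) c = -n c$ — and then apply Perron's method, or equivalently solve the Dirichlet problem on an exhausting sequence $\Omega_k \uparrow M$ with boundary data from $V_+$ and pass to the limit using the barriers. Uniqueness follows since the difference of two solutions asymptotic to $V_{(0)}$ must, by indicial-root analysis at infinity (indicial roots $s = -1$ and $s = n$), decay like $e^{-nr}$ and therefore vanish identically by the maximum principle.

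Finally, the derivative estimates are standard. Interior Schauder estimates on $b$-balls of fixed radius in the end — rescaled so that the geometry is uniformly controlled via Lemma \ref{lmUniformBound} — applied to the equation $(\Delta^g - n)(V^g - V_{(0)}) = -(\Delta^g - n) V_{(0)}$, together with the $C^0$-bound inherited from the barriers and the matching decay of the right-hand side, upgrade the barrier estimate to the $C^1$-bound $|dV^g - dV_{(0)}|_g \leq C_1 e^{-(n-1)r}$. The higher-order bounds $|\nabla^{(k)} V^g| \leq B_k$ on the fixed compact annulus $A_{R''_0, R''_1}$ then follow by an elliptic bootstrap using the uniform $C^k$-bounds on $U$. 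The main obstacle is ensuring that every constant is truly universal — depending only on the radii $R_0, R'_0, R''_0, R''_1$ and on the constants of Lemma \ref{lmUniformBound}, not on the individual quadruple $(M, g, \Phi, U) \in \cA$ — which requires careful tracking in the conformal change of the Laplacian and in the barrier/Perron construction.
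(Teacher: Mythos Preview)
Your overall strategy---conformal analysis on the end, barriers, Dirichlet exhaustion, elliptic regularity---matches the paper's, but your barrier construction is genuinely different and leads to a gap in the existence step.

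The paper does not use the explicit functions $V_{(0)}\pm C e^{-(n-1)r}$. Instead it replaces the supersolution inequality by a universal ODE on $[R'_0,\infty)$ (obtained by bounding the conformal coefficients via Lemma~\ref{lmUniformBound}) and solves it with the specific initial data $V_+(R'_0)=\lambda$, $V_+'(R'_0)=0$, respectively $V_-(R'_0)=0$, $V_-'(R'_0)>0$; a Riccati analysis then yields $V_\pm=V_{(0)}+O(e^{-(n-1)r})$. These boundary conditions are the whole point: the constant extension $V_+\equiv\lambda$ on the compact part is then a $C^1$ weak supersolution on all of $M$, and the zero extension of $V_-$ is a weak subsolution, so the comparison with the Dirichlet solutions $V^r$ goes through globally.

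Your extension step, by contrast, fails as written. A positive constant $c$ satisfies $(\Delta^g-n)c=-nc<0$, so it is a \emph{super}solution, never a subsolution; gluing $V_-$ to a ``small positive constant'' therefore does not give a global subsolution. On the upper side, smoothly gluing $V_+$ to a large constant over a fixed-width annulus produces second derivatives comparable to the height difference, so the supersolution inequality is violated in the transition. Your explicit barriers are perfectly good super/sub-solutions on the end (for $R'_0$ large enough, which is permissible since $R'_0$ is auxiliary), but with $V_-(R'_0)>0$ and $V_+'(R'_0)\neq 0$ they do not extend inward in any elementary way, and without global barriers the Dirichlet-exhaustion argument stalls: one cannot bound $V^r$ on $\partial B_{R'_0}$ uniformly in $r$. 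A clean fix is to bypass global barriers entirely and obtain existence from Fredholm theory in weighted H\"older spaces (solving $(\Delta^g-n)w=-(\Delta^g-n)V_{(0)}$ with $w$ in a space of weight strictly between the indicial rates $1$ and $-n$), and then use your barriers on the end only a posteriori for the pinching $V_-\leq V^g\leq V_+$. Alternatively, modify your explicit barriers so that $V_-$ vanishes and $V_+$ has zero normal derivative at $r=R'_0$; that is essentially what the paper's ODE construction accomplishes.
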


Define
\[
T \definedas \tlric^g - \frac{\tlhess^g V^g}{V^g},
\]
where $\tlric^g = \ric^g + (n-1)g$ denotes the traceless part of the 
Ricci tensor and $\tlhess^g V = \hess^g V - V g$ denotes the traceless 
part of the Hessian of $V$. From the computations in the proof of 
Lemma \ref{lmVariationMass} it follows that the tensor $V^g T$ 
is actually the gradient of the mass at $(M, g)$ in the space $\cA_0(R_1'')$.

We choose a smooth function $\chi$ such that
\[
\chi = 
\begin{cases}
0 & \text{on $B_{R''_0}$}, \\
1 & \text{on $A_{R_1, R'_1}$}, \\
0 & \text{on $\bH^n \setminus B_{R''_1}$},
\end{cases}
\]
and define the metric
\begin{equation}\label{eqDefGs}
g_s \definedas g + s \chi T
\end{equation}
for small values of the parameter $s$.

Next we recall the definition of the weighted local Sobolev spaces,
see \cite{GicquaudSakovich} for more details on these spaces. Let 
$p \in (1, \infty)$,  a non negative integer $k$, and $\delta \in \bR$ 
be given. We define the function space $X^{k, p}_\delta(M, \bR)$ as the 
set of functions $u \in W^{k, p}_{loc}(M, \bR)$ such that the norm
\begin{equation} \label{eqDefXkp}
\left\|u \right\|_{X^{k, p}_\delta(M, \bR)} 
= \sup_{x \in M} e^{\delta r(x)} \left\|u \right\|_{W^{k, p}(B_1(x), \bR)}
\end{equation}
is finite. This space is a Banach space.

We will conformally transform the metrics $g_s$ to have constant scalar 
curvature $\scal = -n(n-1)$. The details of this are taken care of in
the following lemma.

\begin{lemma} \label{lmConformalFactor}
There exists $s_0 > 0$ such that for all $s \in [-s_0, s_0]$ and any 
$(M, g, \Phi, U) \in \cA_0$ it holds that
\[
\frac{1}{2} g \leq g_s \leq 2 g
\]
and
\[
\left| \scal^{g_s} + n(n-1) \right| \leq n-1.
\]
Further, for any $s \in [-s_0, s_0]$ there exists a unique positive function 
$\phi_s$ on $M$ which is bounded from above and away from zero such that 
the metric
\begin{equation*}  
\lambda_s \definedas \phi_s^{\kappa} g_s
\end{equation*}
has constant scalar curvature $-n(n-1)$. The function $\phi_s$ satisfies
\[
\left(\frac{n-1}{n}\right)^{1/\kappa} \leq \phi_s \leq
\left(\frac{n+1}{n}\right)^{1/\kappa}.
\]
In addition, there are constants $C_0, C_1, \ldots$ such that
\begin{equation}\label{eqEstimatesPhi}
\left|\nabla^{(k)}(\phi_s - 1) \right| \leq C_k e^{-n r}
\end{equation}
holds on $\bH^n \setminus B_{R_1}$ for all $k \geq 0$. Finally, the 
map $s \mapsto \phi_s - 1$ from the interval $[-s_0, s_0]$ to
$X^{2, p}_\delta(M, g)$ is $C^2$ for any $p \in (n, \infty)$ and
$\delta \in \left(\frac{n}{2}, n\right)$.
\end{lemma}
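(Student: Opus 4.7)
The plan is to apply the implicit function theorem to the Yamabe equation for $g_s$ at the reference solution $\phi \equiv 1$ corresponding to $s = 0$, supplemented by a maximum principle argument for the pointwise bounds and decay.

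\textbf{Preliminary bounds.} First I would observe that the tensor $T = \tlric^g - \tlhess^g V^g / V^g$ is uniformly bounded in every $C^k$-norm on the support $A_{R''_0, R''_1}$ of $\chi$, with bounds independent of $(M, g, \Phi, U) \in \cA_0$. This follows from the uniform $C^k$-control of $g$ provided by Lemma~\ref{lmUniformBound} (via the conformal factor $U$, which gives bounds on $\ric^g$) together with the pointwise estimates $V_- \leq V^g \leq V_+$ and the higher-order bounds $|\nabla^{(k)} V^g| \leq B_k$ from Lemma~\ref{lmBoundV}. Since $g_s - g = s \chi T$ is compactly supported with $C^k$-norm $O(s)$, the tensor $g_s$ is a Riemannian metric with $\tfrac{1}{2} g \leq g_s \leq 2g$ once $s_0$ is small enough, and because scalar curvature is a second-order differential expression in the metric, $|\scal^{g_s} - \scal^g| = O(s)$ uniformly, yielding $|\scal^{g_s} + n(n-1)| \leq n-1$.

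\textbf{Construction and regularity of $\phi_s$.} The condition $\scal^{\phi_s^\kappa g_s} = -n(n-1)$ is equivalent to
\[
\cF(s, \phi) \definedas -\frac{4(n-1)}{n-2} \Delta^{g_s} \phi + \scal^{g_s} \phi + n(n-1) \phi^{\kappa+1} = 0,
\]
and since $\scal^g \equiv -n(n-1)$ on $\cA_0$ we have $\cF(0, 1) = 0$. The linearization in $\phi$ at this point is $D_\phi \cF(0,1) = -\tfrac{4(n-1)}{n-2}(\Delta^g - n)$. A direct indicial root computation on the hyperbolic model gives the roots $-1$ and $n$, so for $\delta \in (n/2, n) \subset (-1, n)$ this operator is an isomorphism $X^{2,p}_\delta(M, g) \to X^{0,p}_\delta(M, g)$ by standard Fredholm theory for geometric operators on asymptotically hyperbolic manifolds, with injectivity a consequence of the maximum principle. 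The implicit function theorem then produces a unique $\phi_s = 1 + u_s$ with $u_s \in X^{2,p}_\delta$. Since $\cF$ is polynomial in $\phi$ and affine linear in $s$ through $g_s$, the dependence $s \mapsto \phi_s - 1$ is in fact $C^\infty$ as a map into $X^{2,p}_\delta$, which is more than the required $C^2$.

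\textbf{Pointwise bounds and decay.} The constants $\phi_{\pm} = \bigl(\tfrac{n \pm 1}{n}\bigr)^{1/\kappa}$ satisfy $\cF(s, \phi_+) \geq 0$ and $\cF(s, \phi_-) \leq 0$ whenever $|\scal^{g_s} + n(n-1)| \leq n-1$, by a direct substitution using $\Delta^{g_s} \phi_\pm = 0$. A standard maximum principle argument, combined with the asymptotic condition $\phi_s \to 1$ at infinity, therefore forces $\phi_- \leq \phi_s \leq \phi_+$ and also yields uniqueness of $\phi_s$ among bounded positive solutions asymptotic to $1$. For the decay estimates \eqref{eqEstimatesPhi}, note that on $\bH^n \setminus B_{R''_1}$ the metric $g_s$ equals $g$, so $\phi_s$ solves the same Yamabe equation as $U$; the barrier argument of Proposition~\ref{propUControlledDecay} using the anti-de~Sitter-Schwarzschild family then gives $|\phi_s - 1| \leq C_0 e^{-nr}$ on that region, and higher-order bounds follow by bootstrap on balls of fixed radius, exactly as in the proof of Lemma~\ref{lmUniformBound}. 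On the annulus $A_{R_1, R''_1}$, the pointwise decay is inherited from the boundary values at $\partial B_{R''_1}$, while the derivative bounds come from interior elliptic regularity applied to $\cF(s, \phi_s) = 0$, using the uniform $C^k$-control of $g_s$ from the first step.

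\textbf{Main obstacle.} The principal difficulty is ensuring that $s_0$ and the constants $C_k$ are genuinely \emph{uniform} over the class $\cA_0$, not depending on the particular manifold $(M, g, \Phi, U)$. This reduces to a uniform bound on the operator norm of $(\Delta^g - n)^{-1}$ on $X^{2,p}_\delta$. Such a bound follows, outside a compact set, from comparison with the hyperbolic model (where the estimate is explicit and involves only the indicial roots), and inside from the uniform local geometry provided by Lemma~\ref{lmUniformBound}, pieced together by a standard cutoff-and-parametrix argument.
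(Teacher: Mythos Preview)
Your approach is correct in outline and closely parallels the paper's, though you invert the order of two steps. The paper first constructs $\phi_s$ on the full interval $[-s_0,s_0]$ by the sub/super-solution method (citing the argument of Proposition~\ref{propAtoA0}), which immediately delivers both existence and the pointwise bounds $\phi_-\leq\phi_s\leq\phi_+$; only afterwards does it invoke the implicit function theorem, applied at each point $(u_s,s)$ along the curve, to obtain the $C^2$ dependence. In that step the paper shows invertibility of the linearization by noting that the zero-order coefficient $\scal^{g_s}+(\kappa+1)n(n-1)\phi_s^\kappa$ is bounded below by $\tfrac{2n(n-1)}{n-2}$, which kills the $L^2$-kernel and then the Fredholm alternative in $X^{2,p}_\delta$ finishes the job. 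Your route, applying the implicit function theorem once at $(0,1)$ and relying on an indicial-root computation for the isomorphism, is also valid, but it forces you to argue separately that the resulting neighbourhood of $s=0$ is uniform over~$\cA_0$; the paper's ordering avoids this because existence on $[-s_0,s_0]$ is already in hand before any functional-analytic inversion.

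There is one genuine slip in your decay argument. On $\bH^n\setminus B_{R''_1}$ the function $\phi_s$ satisfies the Yamabe equation with background metric $g=U^\kappa b$, not $b$, so the anti-de~Sitter-Schwarzschild barriers $f_m$ (which solve \eqref{eqYamabeb} with respect to $b$) do not directly compare with $\phi_s$. The paper's fix is cleaner than patching the barrier argument: since $\lambda_s=(\phi_s U)^\kappa b$ outside $B_{R''_1}$ with $\scal^{\lambda_s}=-n(n-1)$, the metric $\lambda_s$ lies in $\cA(R''_1)$, and Lemma~\ref{lmUniformBound} applied to the conformal factor $\phi_s U$ gives $|\phi_s U-1|\leq A_k e^{-nr}$; combining with the already-known $|U-1|\leq A_k e^{-nr}$ yields \eqref{eqEstimatesPhi}. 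On the bounded annulus $A_{R_1,R''_1}$ the weighted decay is then automatic from the interior elliptic estimates you mention.
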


For $V = V_{(0)} = \cosh r$ we set 
$H(s) \definedas H_{\Phi}^{\lambda_s} (V)$. This is the time component of
the mass functional, which gives an upper bound on the mass, namely
$m^{\lambda_s} \leq \frac{1}{2(n-1)\omega_{n-1}} H(s)$. Since the coordinates
at infinity are balanced for $g$, we have 
$m^g = m^{\lambda_0} = \frac{1}{2(n-1)\omega_{n-1}} H(0)$.
In what follows we 
will denote derivatives with respect to the parameter $s$ by a dot.

\begin{lemma}\label{lmMassLambda}
The map $s \mapsto H(s)$ is a $C^2$ function. Further, there is a 
constant $A$ independent of $(M, g, \Phi, U) \in \cA_0$ such that 
\[
|\ddot{H}(s)| \leq A.
\]
\end{lemma}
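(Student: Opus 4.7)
My plan is to convert the boundary-at-infinity definition of $H(s)$ into an integral formula in which the $s$-dependence goes entirely through $\phi_s$, and then to differentiate under the integral using the $C^2$-regularity of $s \mapsto \phi_s$ provided by Lemma \ref{lmConformalFactor}. On the region $\bH^n \setminus B_{R''_1}$ the cut-off $\chi$ vanishes, so $g_s = g$ and $\lambda_s = (\phi_s U)^\kappa b = \Psi_s^\kappa b$ with $\Psi_s \definedas \phi_s U$; since $\lambda_s$ has constant scalar curvature $-n(n-1)$, the function $\Psi_s$ solves the Yamabe equation on this region. Starting from
\[
H(s) = (n-1)\lim_{r\to\infty}\int_{S_r}\bigl(f_s\,\partial_r V - V\,\partial_r f_s\bigr)\,d\mu^b,
\]
with $f_s = \Psi_s^\kappa - 1$ and $V = V_{(0)} = \cosh r$, I would apply Green's identity on annular regions $R''_1 < r < R$ together with $\Delta^b V = nV$ and the Yamabe equation. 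The $V$-eigenfunction property should force the terms linear in $\Psi_s - 1$ to cancel, producing a representation of the form
\[
H(s) = H^{\mathrm{bdry}}(s) + H^{\mathrm{bulk}}(s),
\]
where $H^{\mathrm{bdry}}(s)$ is a boundary integral on $S_{R''_1}$ in $\Psi_s$ and $\partial_r \Psi_s$, and $H^{\mathrm{bulk}}(s)$ is an absolutely convergent integral over $\bH^n \setminus B_{R''_1}$ whose integrand is quadratic-and-higher in $\Psi_s - 1$ and its first derivatives.

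With this representation in hand, $H(s)$ is a smooth (essentially polynomial) functional of $\phi_s$, $U$ and their first derivatives, so smoothness in $s$ reduces to smoothness of $s \mapsto \phi_s$. Lemma \ref{lmConformalFactor} gives $s \mapsto \phi_s - 1$ as a $C^2$ map into $X^{2,p}_\delta(M,g)$ for $p \in (n,\infty)$ and $\delta \in (n/2, n)$; by Sobolev embedding this controls $\phi_s$ in $C^1$ on balls of fixed radius with exponential weight, which together with the pointwise bounds \eqref{eqEstimatesPhi} and Lemma \ref{lmUniformBound} justifies differentiating inside both integrals. The $C^2$-regularity of $H$ then follows from the chain rule, and the interchange of $\partial_s^2$ with the bulk integral is legitimate because the quadratic-and-higher structure in $\Psi_s - 1$ supplies enough decay for each $s$-derivative of the integrand to be integrable against $d\mu^b$.

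For the uniform bound on $\ddot H(s)$, I observe that $\ddot H(s)$ is the integral of an expression which is linear in $\ddot\phi_s$ and $\ddot\phi_s$'s derivatives and quadratic in $\dot\phi_s$ (and its derivatives), weighted by bounded quantities involving $\phi_s$, $U$, $V_{(0)}$, and their derivatives. All of these weights have uniform bounds across $\cA_0$ thanks to Lemmas \ref{lmUniformBound} and \ref{lmConformalFactor}, and the $C^2$-norm of $s \mapsto \phi_s - 1$ in $X^{2,p}_\delta(M,g)$ is itself uniform (the implicit function argument producing $\phi_s$ in Lemma \ref{lmConformalFactor} is applied with $s_0$ chosen independently of the 4-tuple). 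Combining these ingredients yields $|\ddot H(s)| \leq A$ for a constant $A$ independent of $(M, g, \Phi, U) \in \cA_0$.

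The main obstacle I anticipate is the rewriting step: a naive attempt to differentiate the limit formula in $s$ fails because $\dot\phi_s$ a priori has decay only like $e^{-\delta r}$ with $\delta < n$, which is insufficient for the boundary integrals on $S_r$ to converge as $r \to \infty$. Using $\Delta^b V = nV$ and the Yamabe equation to absorb every linear-in-$(\Psi_s - 1)$ contribution into a boundary integral at the finite radius $R''_1$, leaving only quadratic nonlinearities in the bulk integral, is precisely what makes the differentiation justified and the uniform bound possible.
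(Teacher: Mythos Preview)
Your approach is essentially the paper's: rewrite $H(s)$ as a boundary integral on a fixed finite sphere plus a bulk integral over the exterior in which the linear-in-$e_s$ part has been absorbed (using $\scal^{\lambda_s}=-n(n-1)$), leaving only the quadratic remainder $\mathcal{Q}(e_s,V)$; then differentiate under the integral using the $C^2$-regularity of $s\mapsto\phi_s$ from Lemma~\ref{lmConformalFactor}. The one substantive difference is how the \emph{uniform} bounds on $\dot\phi_s$ and $\ddot\phi_s$ are obtained. You invoke uniformity in the implicit function theorem, which is correct in principle but requires checking that the operator norm of $D_u\Xi(u_s,s)^{-1}$ on $X^{2,p}_\delta(M,g)$ and the $s$-derivatives of $\Xi$ are bounded independently of the 4-tuple (and that the $X^{2,p}_\delta(M,g)$-norms are uniformly comparable across $\cA_0$, which they are on the asymptotic region by Lemma~\ref{lmUniformBound}). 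The paper instead differentiates the Yamabe equation \eqref{eqYamabegs} with respect to $s$, observes that the resulting source term $\tfrac{4(n-1)}{n-2}\partial_s\Delta^{g_s}\phi_s-\partial_s\scal^{g_s}\,\phi_s$ is supported in the fixed annulus where $\chi$ varies and is bounded by a universal constant (from the universal bounds on $T$ and its derivatives), and then applies the maximum principle using the uniform lower bound $\scal^{g_s}+(\kappa+1)n(n-1)\phi_s^\kappa\ge \tfrac{2n(n-1)}{n-2}$ to get $\sup|\dot\phi_s|\le C$ directly; the same strategy is used for $\ddot\phi_s$. This route is more self-contained and makes the uniformity across $\cA_0$ transparent, whereas your route is shorter but leaves that uniformity as an unchecked assertion.
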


In the next proposition we find that $\dot{H}(0)$ is related to the 
$L^2$-norm of $\tlric^g - \frac{1}{V^g} \tlhess V^g$ on an annulus,
which can be interpreted as a measure of ``non-staticity'' of the
metric $g$ on the annulus.

\begin{lemma}\label{lmVariationMass} 
Suppose $(M, g, \Phi, U) \in \cA_0$ and $H(s)$ is defined as above, 
then 
\[
\dot{H}(0) = 
\int_M 
\chi V^g \left| \tlric^g - \frac{\tlhess V^g}{V^g} \right|_g^2 
\, d\mu^g.
\]
\end{lemma}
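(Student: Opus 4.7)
The plan is to express $\dot H(0)$ as a bulk integral via the formal adjoint of the linearized scalar curvature, exploiting the constancy of $\scal^{\lambda_s} \equiv -n(n-1)$ along the family.

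First, I would differentiate $\lambda_s = \phi_s^{\kappa} g_s$ at $s = 0$: since $\phi_0 = 1$ and $\dot g_0 = \chi T$, this gives
\[
\dot\lambda_0 = \kappa\, \dot\phi_0\, g + \chi T,
\]
where $\dot\phi_0$ is controlled via the linearized Yamabe equation of Lemma \ref{lmConformalFactor}. The key analytic input is the following formal adjoint identity: for any symmetric $2$-tensor $h$ of sufficient decay and any smooth function $V$,
\[
\int_M V\, \delta_g \scal(h)\, d\mu^g
= \int_M \langle \mathcal{D}^*_g V, h\rangle_g\, d\mu^g + \mathcal{B}_\infty(V, h),
\]
where $\delta_g \scal(h) = -\Delta^g(\tr_g h) + \divg^g \divg^g h - \langle h, \ric^g\rangle_g$ is the linearization of the scalar curvature, $\mathcal{D}^*_g V = -(\Delta^g V)\, g + \hess^g V - V \ric^g$ is its formal adjoint, and $\mathcal{B}_\infty(V, h)$ is the boundary integral at infinity produced by the two integrations by parts. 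Using the decay $V^g - V_{(0)} = O(e^{-(n-1)r})$ from Lemma \ref{lmBoundV}, the estimate \eqref{eqEstimatesPhi} on $\dot\phi_0$, and the a priori control on $U$ of Lemma \ref{lmUniformBound}, one checks that $\mathcal{B}_\infty(V^g, \dot\lambda_0) = \dot H(0)$: after replacing the $g$-geometric operations by their $b$-counterparts in the asymptotic region, the boundary integrand matches the integrand of the mass functional differentiated at $s=0$, modulo terms decaying too fast to contribute.

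Since $\scal^{\lambda_s}$ is constant in $s$, we have $\delta_g \scal(\dot\lambda_0) = 0$, and hence
\[
\dot H(0) = -\int_M \langle \mathcal{D}^*_g V^g, \dot\lambda_0\rangle_g\, d\mu^g.
\]
The crucial algebraic step is the cancellation in $\mathcal{D}^*_g V^g$: on $M$ the constraint $\scal^g = -n(n-1)$ gives $\ric^g = \tlric^g - (n-1)\, g$, while $\Delta^g V^g = n V^g$ gives $\hess^g V^g = \tlhess V^g + V^g\, g$. Substituting, the pure-trace contributions $-nV^g g + V^g g + (n-1)V^g g$ cancel and we are left with
\[
\mathcal{D}^*_g V^g = \tlhess V^g - V^g \tlric^g = -V^g T.
\]
Plugging this in and using that $T$ is traceless, so that $\langle T, g\rangle_g = 0$, we conclude
\[
\dot H(0) = \int_M V^g \langle T, \kappa\dot\phi_0\, g + \chi T\rangle_g\, d\mu^g = \int_M \chi V^g\, |T|_g^2\, d\mu^g,
\]
which is the claim.

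The main obstacle is the identification $\mathcal{B}_\infty(V^g, \dot\lambda_0) = \dot H(0)$: one has to carefully track the boundary integrand coming out of the two integrations by parts and match it with the $b$-defined integrand of the mass functional after differentiation in $s$, using the sharp decay of $V^g - V_{(0)}$, of $\dot\phi_0$, and of $g - b = (U^\kappa - 1)\, b$ to discard the various error terms as the sphere radius $R \to \infty$.
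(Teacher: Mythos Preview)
Your proposal is correct and amounts to the same computation as the paper's proof, repackaged in the language of the formal adjoint $\mathcal{D}^*_g$. The paper splits $\dot e = \kappa\dot\phi\,g + \chi T$ and treats the pieces separately: the compactly supported part $\chi T$ contributes nothing to the boundary integral, while for the conformal part the boundary integrand collapses to $\frac{4(n-1)}{n-2}(\dot\phi\,dV^g - V^g\,d\dot\phi)$, which the divergence theorem converts to $\frac{4(n-1)}{n-2}\int_M V^g(n\dot\phi - \Delta^g\dot\phi)\,d\mu^g$; the linearized Yamabe equation then turns this into $-\int_M V^g\,\dot\scal^g(\chi T)\,d\mu^g$, and a final integration by parts (now with no boundary, since $\chi T$ has compact support) gives the result. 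Your route replaces these two separate integrations by parts by the single identity $\int_M V\,\delta_g\scal(h)\,d\mu^g = \int_M\langle\mathcal{D}^*_g V,h\rangle_g\,d\mu^g + \mathcal{B}_\infty$, and replaces the explicit use of the linearized Yamabe equation by the equivalent statement $\delta_g\scal(\dot\lambda_0)=0$; your algebraic observation $\mathcal{D}^*_g V^g = -V^g T$ is exactly what the paper obtains in pieces. The step you flag as the main obstacle --- identifying $\mathcal{B}_\infty(V^g,\dot\lambda_0)$ with $\dot H(0)$ by passing from $(b,V_{(0)})$ to $(g,V^g)$ in the mass integrand --- is precisely the paper's equation \eqref{mdot}, justified by the same decay estimates you cite.
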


We are now ready to prove Theorem \ref{thmNearPMT1}.

\begin{proof}[Proof of Theorem \ref{thmNearPMT1}]
We first assume that the metric $g$ has constant scalar curvature.
Applying Taylor's formula to $H(s)$ on the interval $(-s_0, s_0)$ 
we find
\[\begin{split}
H(s) 
&= 
H(0) + s \dot{H}(0) + \int_0^s (s-t) \ddot{H}(t) dt\\
&\leq 
H(0) + s \dot{H}(0) + A \int_0^s (s-t) dt\\
&\leq 
H(0) + s \dot{H}(0) + \frac{A}{2} s^2.
\end{split}\]
From the assumption that the positive mass theorem holds for any 
asymptotically hyperbolic metric on $M$ we have 
$H(s) \geq 2(n-1)\omega_{n-1} m^{\lambda_s} \geq 0$ for $s \in (-s_0, s_0)$.
As a consequence,
\[
0 \leq H(0) + s \dot{H}(0) + \frac{A}{2} s^2.
\]
Assuming that $H(0) \leq \frac{2 s_0^2}{A}$, we write the 
previous inequality with $s = -\sqrt{\frac{2 H(0)}{A}}$ and 
get
\begin{equation} \label{eqControlmdot}
\dot{H}(0) 
\leq \sqrt{2 A H(0)}
= \sqrt{4 A (n-1) \omega_{n-1} m^g}. 
\end{equation}

Let $\epsilon$ be an arbitrary positive number. We claim that there 
exists $\delta > 0$ such that any $(M, g, \Phi, U)$ belonging to 
$\cA_0$ and having mass $m^g \leq \delta$ satisfies
\[
\sup_{\bH^n \setminus B_{R_1}} \left|U-1\right| \leq \epsilon.
\]
To prove this we argue by contradiction and assume that there is a 
sequence $(M_k, g_k, \Phi_k, U_k)$ of elements of $\cA_0$ such that 
the mass $m_k \definedas m^{g_k}$ tends to zero while
$\left| U_k - 1 \right| \geq \epsilon$. Using Lemmas 
\ref{lmUniformBound}, \ref{lmBoundV} and 
\cite[Proposition 2.3]{GicquaudSakovich} 
(Rellich theorems for weighted local Sobolev spaces), we construct 
functions $U_\infty$ and $V_\infty$ on $\bH^n \setminus B_{R'_0}$ as 
limits of some subsequence of $U_k$ and $V^{g_k}$. Choose 
$p \in (n, \infty)$ and 
$\delta \in \left(\frac{n}{2}, n\right)$.
\begin{itemize}
\item 
From Lemma \ref{lmUniformBound}, the sequence $U_k - 1$ is
bounded in $X^{3, p}_n(\bH^n \setminus B_{R'_0})$. Hence there exists a
subsequence converging to a limit $U_\infty - 1$ in $X^{2, p}_\delta$.
\item 
To construct $V_\infty$, it suffices to remark that the sequence $V_k$ 
is uniformly bounded in $W^{3, p}(K)$ for any compact subset
$K \subset \bH^n \setminus \mathring{B}_{R'_0}$ by standard elliptic
regularity. Hence, by a diagonal process, we can construct a subsequence
of functions $V_k$ converging in the $W^{2, p}$-norm on any compact subset.
\end{itemize}
The function $U_\infty$ solves \eqref{eqYamabeb} and $V_\infty$ solves 
$\Delta^{g_\infty} V_\infty = n V_\infty$, where 
$g_\infty \definedas U_\infty^\kappa b$. They satisfy the asymptotics of 
Lemmas \ref{lmUniformBound} and \ref{lmBoundV}. Further,
\begin{equation} \label{Uinftynotzero}
\sup_{\bH^n \setminus B_{R_1}} \left| U_{\infty} - 1 \right| \geq \epsilon.
\end{equation} 
The metric $g_\infty$ has mass zero since the mass depends continuously
on $U-1 \in X^{2, p}_\delta$ (see the proof of Lemma \ref{lmMassLambda}).

Lemma \ref{lmVariationMass} together with the estimate 
\eqref{eqControlmdot} applied to $(M_k, g_k, \Phi_k, U_k)$ gives the
inequality
\[
\int_M \chi V^{g_k} 
\left| \tlric^{g_k} - \frac{\tlhess V^{g_k}}{V^{g_k}} \right|_{g_k}^2 
\, d\mu^{g_k}
\leq \sqrt{4 A (n-1) \omega_{n-1} m_k}
\]
for any $k$. In particular, we obtain
\[
\int_{\bH^n \setminus B_{R'_0}} \chi V^{g_\infty} 
\left| 
\tlric^{g_\infty} - \frac{\tlhess V^{g_\infty}}{V^{g_\infty}}
\right|_{g_\infty}^2 
\, d\mu^{g_\infty} = 0
\]
when we let $k$ tend to infinity. Therefore 
\[
\tlric^{g_\infty} = \frac{\tlhess V^{g_\infty}}{V^{g_\infty}}
\] 
on $A_{R''_0, R'_1}$. By analyticity this equality holds on all of 
$\bH^n \setminus B_{R'_0}$. From Proposition \ref{propLCFStatic} and the 
fact that the metric $g_\infty$ has zero mass, we conclude that $g_\infty$ 
is hyperbolic. This forces $U_\infty = 1$ which contradicts 
\eqref{Uinftynotzero}. We have thus proved the claim made above.

At this point, we would like to emphasize that the metric $g_\infty$ is
defined only on $\bH^n \setminus B_{R'_0}$ so it is not complete.
In particular, the standard positive mass theorem does not apply. This
is why Proposition \ref{propLCFStatic} is needed.

The proof of Theorem \ref{thmNearPMT1} in the general case
$\scal^g \geq -n(n-1)$ is then concluded by Proposition \ref{propAtoA0}
followed by Proposition \ref{propUControlledDecay}.
\end{proof}

\subsection{Proof of Proposition \ref{propAtoA0}}

In this section we prove Proposition \ref{propAtoA0}: the conformal factor 
transforming a metric $g$ to a metric $\gtil$ with constant scalar 
curvature is controlled by the difference $m^g - m^{\gtil}$ of their masses. 
This was used to reduce the proof of Theorem \ref{thmNearPMT1} to 
elements of the class $\cA_0$. Such a reduction can also be found in 
\cite[Proposition 3.13]{AnderssonCaiGalloway}.

As it will become apparent, the proof of this proposition yields a simpler
argument for Theorem \ref{thmNearPMT1} in the case $U \geq 1$. However,
since it is based on estimates for solutions to the Yamabe equation on 
$\bH^n \setminus B_{R_1}$, the argument cannot be generalized to 
arbitrary $U$. Indeed, one can find solutions to the Yamabe 
equation \eqref{eqYamabeb} on $\bH^n \setminus B_{R_0}$ that
oscillate around $1$ to produce metrics with zero mass. This shows 
that the strategy of the proof of Proposition \ref{propAtoA0}
is too weak to produce a full proof of Theorem \ref{thmNearPMT1}.

We first make a certain observation about $(M, g, \Phi, U) \in \cA$.
If we set $U = 1 + u$, Equation \eqref{eqYamabeb} can be written in the
form
\begin{equation} \label{eqYamabebu}
\partial_r^2 u + (n-1) \coth r \, \partial_ r u - nu
= 
f(u) - \sinh^{-2} r \Delta^{\sigma} u
\end{equation}
where 
\[
f(u) \definedas
\frac{n(n-2)}{4} 
\left( (1+u)^{\frac{n+2}{n-2}} - 1 - \frac{n+2}{n-2} u \right).
\]
We remark that the ordinary differential equation
\[
u''(r) + (n-1) \coth r\, u'(r) - nu(r) = 0
\]
has the solutions
\begin{align*}
u_0 (r) &= 
\cosh r \int _r ^{\infty} \frac{1}{\cosh^2 \tau \sinh^{n-1} \tau} \, d\tau 
= \frac{2^n}{n+1} e^{-nr} + O(e^{-(n+2)r}), \\
u_1(r) &= \cosh r.
\end{align*}

\begin{lemma} \label{lmIntv}
Suppose $U = 1 + u$ is such that $u$ satisfies \eqref{eqYamabebu} on 
$\bH^n \setminus B_{R_0}$ and the metric $U^{\kappa} b$ is asymptotically 
hyperbolic with respect to the identity chart at infinity.
Then $v \definedas u / u_0$ satisfies 
\[\begin{split}
&\int_{S^{n-1}} v(s) \, d\mu^{\sigma} 
\geq \frac{(n-2)\omega_{n-1}}{2} m^{U^{\kappa} b} \\
&\qquad +
\int_s^{\infty}
\left( 1 - \frac{\cosh s}{\cosh r} \frac{u_0(r)}{u_0(s)} \right)
\cosh r \sinh^{n-1} r 
\left( \int_{S^{n-1}} f(u(r,\theta)) \, d\mu^{\sigma} \right)
\, dr
\end{split}\]
where $ m^{U^{\kappa} b}$ is the mass of the metric $U^{\kappa} b$. 
Equality holds if the identity chart at infinity is balanced for 
$U^{\kappa} b$.
\end{lemma}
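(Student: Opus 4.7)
The plan is to reduce the problem to a linear inhomogeneous ODE for the spherical mean of $v$ by using $u_0$ as an integrating factor. Writing $u = u_0 v$ and substituting into \eqref{eqYamabebu}, the terms arising from the homogeneous ODE satisfied by $u_0$ cancel, leaving
\[
\partial_r^2 v + \left( \frac{2 u_0'}{u_0} + (n-1)\coth r \right) \partial_r v
= \frac{1}{u_0} \left( f(u) - \sinh^{-2} r \, \Delta^{\sigma} u \right).
\]
Averaging over $S^{n-1}$ (so that the Laplacian term disappears) and multiplying by the integrating factor $u_0^2 \sinh^{n-1} r$, this becomes
\[
\partial_r \Bigl( u_0^2 \sinh^{n-1} r \, \partial_r \bar v \Bigr)
= u_0 \sinh^{n-1} r \int_{S^{n-1}} f(u) \, d\mu^{\sigma},
\]
where $\bar v(r) = \int_{S^{n-1}} v(r,\theta) \, d\mu^{\sigma}$. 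The asymptotic expansion of $U$ (or rather of $u$) together with $u_0 \sim \tfrac{2^n}{n+1} e^{-nr}$ ensures both that $\bar v(r)$ converges at infinity to $\int_{S^{n-1}} v_\infty \, d\mu^{\sigma}$ (where $v_\infty$ is the coefficient in the expansion) and that the boundary term $u_0^2 \sinh^{n-1} r \, \partial_r \bar v$ vanishes at infinity.

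I would then integrate the displayed identity twice from $s$ to $\infty$ and swap the order of integration in the resulting double integral, obtaining
\[
\bar v(s) = \bar v(\infty) + \int_s^\infty \left( \int_s^r \frac{d\tilde s}{u_0(\tilde s)^2 \sinh^{n-1} \tilde s} \right) u_0(r) \sinh^{n-1} r \int_{S^{n-1}} f(u) \, d\mu^{\sigma} \, dr.
\]
To evaluate the inner integral, I would use the Wronskian of the two homogeneous solutions $u_0$ and $u_1 = \cosh r$. Since the homogeneous equation has Wronskian satisfying $W'/W = -(n-1)\coth r$, one finds $W = u_0 u_1' - u_1 u_0' = \sinh^{-(n-1)} r$, and so
\[
\frac{d}{d\tilde s} \left( \frac{\cosh \tilde s}{u_0(\tilde s)} \right)
= \frac{W(\tilde s)}{u_0(\tilde s)^2}
= \frac{1}{u_0(\tilde s)^2 \sinh^{n-1} \tilde s}.
\]
Therefore the inner integral equals $\cosh r / u_0(r) - \cosh s / u_0(s)$, and after factoring $\cosh r$ and $u_0(r)$ out of the outer integrand, the desired formula for $\bar v(s)$ appears.

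The final step is to identify $\bar v(\infty)$ with a quantity bounded below by the mass: by \eqref{confmassbalanced}, in any (possibly unbalanced) asymptotically hyperbolic chart one has $\int_{S^{n-1}} v_\infty \, d\mu^{\sigma} \geq \tfrac{(n-2)\omega_{n-1}}{2} m^{U^\kappa b}$, with equality when the chart is balanced. This gives the claimed inequality. The only delicate point in the argument is controlling the boundary terms at infinity when integrating by parts and justifying the passage to the limit $r \to \infty$, but this follows from the asymptotic expansion $U = 1 + \tfrac{2^n}{n+1} v_\infty e^{-nr} + O(e^{-(n+1)r})$ recalled in the subsection on conformally hyperbolic metrics together with the corresponding decay of $\partial_r u$ obtained by elliptic regularity from \eqref{eqYamabebu}.
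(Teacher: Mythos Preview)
Your proposal is correct and follows essentially the same route as the paper: substitute $u=u_0 v$, multiply by the integrating factor $u_0^2\sinh^{n-1}r$, integrate twice from $s$ to $\infty$, compute the inner integral, and invoke \eqref{confmassbalanced}. The only cosmetic differences are that the paper keeps the $\theta$-dependence throughout and integrates over $S^{n-1}$ at the very end (rather than averaging first), computes the inner integral directly from the explicit integral formula for $u_0$ (rather than via the Wronskian of $u_0$ and $u_1$), and justifies the vanishing boundary term at infinity via the bound $\partial_r v = O(1)$ from Lemma~\ref{lmUniformBound} (rather than the asymptotic expansion of $U$).
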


\begin{proof}
Substituting $u=u_0 v$ into \eqref{eqYamabebu} we get
\[
u_0 \partial_r^2 v + ( 2 u_0' + (n-1) \coth r\, u_0) \partial_r v 
= 
f(u) - \sinh^{-2} r \Delta^{\sigma} u . 
\]
If we multiply this equation by $u_0 \sinh^{n-1} r$ we obtain
\[
\partial_r \left( u_0^2 \sinh^{n-1} r \, \partial_r v \right)
= 
u_0 \sinh^{n-1} r \left(
f(u) - \sinh^{-2} r \Delta^{\sigma} u
\right). 
\]
Integration from $t$ to $\infty$ gives
\[\begin{split}
&\left( u_0^2 \sinh^{n-1} r\, \partial_r v \right)_{| r = \infty}
-
\left( u_0^2 \sinh^{n-1} r \, \partial_r v \right)_{| r = t} \\ 
&\qquad = 
\int_t^{\infty}
u_0(r) \sinh^{n-1} r \left(
f(u(r,\theta)) - \sinh^{-2} r \Delta^{\sigma} u(r,\theta)
\right) \, dr.
\end{split}\]
We observe that $\partial_r v = O(1)$ by Lemma \ref{lmUniformBound}.
Hence $u_0^2 \sinh^{n-1} r \, \partial_r v = O(e^{-(n+1)r})$, so the first 
term in the left-hand side vanishes. Consequently we have 
\[
- \partial_r v(t,\theta) 
= 
\frac{1}{u_0^2(t) \sinh^{n-1} t}
\int_t^{\infty}
u_0(r) \sinh^{n-1} r \left(
f(u(r,\theta)) - \sinh^{-2} r \Delta^{\sigma} u(r,\theta)
\right) \, dr.
\]
Integrating from $s$ to $\infty$ and changing order of integration we 
obtain
\[\begin{split}
&v(s,\theta) - \lim_{r \to \infty} v(r,\theta) \\
&\quad = 
\int_s^{\infty}
\frac{1}{u_0^2(t) \sinh^{n-1} t}
\int_t^{\infty}
u_0(r) \sinh^{n-1} r \left(
f(u(r,\theta)) - \sinh^{-2} r \Delta^{\sigma} u(r,\theta)
\right) \, dr dt \\
&\quad = 
\int_s^{\infty}
\left( \int_s^r \frac{1}{u_0^2(t) \sinh^{n-1} t} \, dt \right)
u_0(r) \sinh^{n-1} r 
\left( f(u(r,\theta)) - \sinh^{-2} r \Delta^{\sigma} u(r,\theta) \right)
\, dr.
\end{split}\]
Here the integral over $t$ is 
\[\begin{split}
&\int_s^r \frac{1}{u_0^2 (t) \sinh^{n-1} t} \, dt \\
&\qquad =
\int_s^r \frac{1}{\cosh^2 t \sinh^{n-1} t
\left(
\int_t^{\infty} \frac{1}{\cosh^2 \tau \sinh^{n-1} \tau} \, d\tau
\right)^2
} \, dt \\
&\qquad =
\int_s^r 
\left(
\frac{1}
{\int_t^{\infty} \frac{1}{\cosh^2 \tau \sinh^{n-1} \tau} \, d\tau}
\right)'
\, dt \\
&\qquad =
\frac{1}
{\int_r^{\infty} \frac{1}{\cosh^2 \tau \sinh^{n-1} \tau} \, d\tau}
-
\frac{1}
{\int_s^{\infty} \frac{1}{\cosh^2 \tau \sinh^{n-1} \tau} \, d\tau} \\
&\quad =
\frac{\cosh r}{u_0(r)} - \frac{\cosh s}{u_0(s)},
\end{split}\]
thus
\[\begin{split}
&v(s,\theta) - \lim_{r \to \infty} v(r,\theta) \\
&\quad = 
\int_s^{\infty}
\left( \frac{\cosh r}{u_0(r)} - \frac{\cosh s}{u_0(s)} \right)
u_0(r) \sinh^{n-1} r 
\left( f(u(r,\theta)) - \sinh^{-2} r \Delta^{\sigma} u(r,\theta) \right)
\, dr .
\end{split}\]
From \eqref{confmassbalanced} we have 
\[
\frac{(n-2)\omega_{n-1}}{2} m^{U^{\kappa} b}
\leq \lim_{r \to \infty} \int_{S^{n-1}} v(r,\theta) \, d\mu^{\sigma},
\]
so when we integrate over $S^{n-1}$ we arrive at
\[\begin{split}
&\int_{S^{n-1}} v(s,\theta) \, d\mu^{\sigma} 
- \frac{(n-2)\omega_{n-1}}{2} m^{U^{\kappa} b} \\
&\qquad \geq
\int_s^{\infty}
\left( \frac{\cosh r}{u_0(r)} - \frac{\cosh s}{u_0(s)} \right)
u_0(r) \sinh^{n-1} r 
\left( \int_{S^{n-1}} f(u(r,\theta)) \, d\mu^{\sigma} \right)
\, dr \\
&\qquad \geq
\int_s^{\infty}
\left( 1 - \frac{\cosh s}{\cosh r} \frac{u_0(r)}{u_0(s)} \right)
\cosh r \sinh^{n-1} r 
\left( \int_{S^{n-1}} f(u(r,\theta)) \, d\mu^{\sigma} \right)
\, dr,
\end{split}\]
with equality if the coordinates at infinity are balanced. 
\end{proof}

\begin{proof}[Proof of Proposition \ref{propAtoA0}]
The existence of the function $w$ is guaranteed by 
\cite[Theorem 1.2]{AnderssonChruscielFriedrich} which says that any 
asymptotically hyperbolic manifold is conformally related to
one with scalar curvature $-n(n-1)$. The function $w$ is a 
solution of the Yamabe equation
\begin{equation}\label{eqYamabeg}
-\frac{4(n-1)}{n-2} \Delta^g w+ \scal^g w = -n (n-1) w^{\kappa+1}.
\end{equation}
Since $\scal^g \geq -n(n-1)$, the constant function $1$ is a 
supersolution of \eqref{eqYamabeg}. Applying the maximum principle as 
in the proof of Lemma \ref{lmUniformBound} we conclude that $w \leq 1$. 
Consequently, since both $U$ and $\Util$ satisfy the Yamabe equation 
\eqref{eqYamabeb}, it follows from the proof of Lemma 
\ref{lmUniformBound} that $U_- \leq \Util \leq U \leq U_+$ on 
$\bH^n\setminus B_{R_0}$. We set $\util = \Util - 1$, 
$\vtil = u_0^{-1} \util$, and we note that $\util \leq u$ and 
$\vtil \leq v$. Since $\Phi$ are balanced coordinates at infinity 
for $g$ (but not necessarily for $\gtil$) we see from 
\eqref{confmassbalanced} that 
\[
m^g - m^{\gtil} 
\geq 
\lim_{r \to \infty} \frac{2}{(n-2)\omega_{n-1}}
\int_{S^{n-1}} (v(r,\theta) - \vtil(r,\theta)) \, d\mu^{\sigma}
\geq 0.
\]
Again, since $\Phi$ are balanced coordinates at infinity for $g$ we 
conclude from Lemma \ref{lmIntv} that
\[\begin{split}
&\int_{S^{n-1}} (v(s,\theta)-\vtil (s,\theta)) \, d\mu^{\sigma} 
\leq
\frac{(n-2)\omega_{n-1}}{2}
\left( m^g - m^{\gtil} \right) \\
&\qquad + 
\int_s^{\infty}
\left( 1 - \frac{\cosh s}{\cosh r} \frac{u_0(r)}{u_0(s)} \right)
\cosh r \sinh^{n-1} r 
\left( \int_{S^{n-1}} \left(f(u(r,\theta))-f(\util(r,\theta)) \right) 
\, d\mu^{\sigma} \right) \, dr .
\end{split}\]
Observe that
\[
0 \leq \frac{\cosh s}{\cosh r} \frac{u_0(r)}{u_0(s)} \leq 1. 
\]
Moreover, recall that $u_+ = U_+ - 1 > 0$. Therefore we can use mean
value theorem to show that
\[\begin{split}
f(u) - f(\util) 
&= 
f'(tu+(1-t)\util))(u-\util) \\
&\leq 
\Ctil (tu+(1-t)\util)(u-\util) \\
&\leq 
\Ctil u_+ (u-\util) \\
&= 
\Ctil u_0 v_+ (u_0 v-u_0 \vtil) \\
&= 
\Ctil u_0^2 v_+(v- \vtil),
\end{split}\]
where $0 \leq t \leq 1$, $v_+ = u_0^{-1} u_+$, and the constant 
$\Ctil > 0$ depends only on $f$. Consequently, we can estimate 
\[\begin{split}
\int_{S^{n-1}} (v(s,\theta)-\vtil (s,\theta)) \, d\mu^{\sigma} 
&\leq 
\frac{(n-2)\omega_{n-1}}{2} \left( m^g - m^{\gtil} \right) \\ 
&\qquad + 
\int_s^{\infty} F(r) \left( 
\int_{S^{n-1}} (v(r,\theta)- \vtil(r,\theta) ) \, d\mu^{\sigma} 
\right) \, dr,
\end{split}\]
where $F(r) \definedas C' \cosh r \sinh^{n-1} r\, u_0^2(r) v_+(r)$. 

We now argue as in the proof of Gronwall's lemma and prove the estimate 
\begin{equation} \label{eqGronwall}
\int_{S^{n-1}}(v(s,\theta)-\vtil (s,\theta)) \, d\mu^{\sigma}
\leq \frac{(n-2)\omega_{n-1}}{2}
\left( m^g - m^{\gtil} \right) 
e^{\int_s^{\infty}F(t)\,dt}.
\end{equation}
We first consider the case when $m^g - m^{\gtil} > 0$ and set 
\[
G(s) \definedas 
\frac{(n-2)\omega_{n-1}}{2} \left( m^g - m^{\gtil} \right) 
+ \int_s^{\infty}F(r)\left(\int_{S^{n-1}}(v(r,\theta)- \vtil(r,\theta))
\, d\mu^{\sigma} \right) \, dr.
\]
Thus we have 
$\int_{S^n}(v(s,\theta)-\vtil (s,\theta)) \, d\mu^{\sigma}\leq G(s)$, and 
$G(s)\geq \frac{(n-2)\omega_{n-1}}{2}\left(m^g - m^{\gtil} \right)$. It is 
also clear that
\[
G'(s) 
= -F(s)\int_{S^{n-1}} (v(s,\theta)-\vtil (s,\theta)) \, d\mu^{\sigma}
\geq -F(s)G(s).
\]
Since $G(s)>0$ we conclude that
\[
\frac{G'(s)}{G(s)} \geq -F(s).
\] 
Integrating this inequality from $s$ to $\infty$ we get 
\[
\ln \left( \frac{(n-2)\omega_{n-1}}{2} 
\left(m^g - m^{\gtil} \right) \right) 
- \ln G(s) \geq -\int_s^{\infty} F(t)\,dt.
\] 
This yields 
\[
G(s) 
\leq \frac{(n-2)\omega_{n-1}}{2}
\left(m^g - m^{\gtil} \right) e^{\int_s^{\infty}F(t)\,dt},
\]
which in its turn implies \eqref{eqGronwall}. Note that 
\eqref{eqGronwall} also holds for $m^g - m^{\gtil} = 0$ which 
follows by passing to the limit when $m^g - m^{\gtil} > 0$ and 
$m^g - m^{\gtil} \rightarrow 0$ in \eqref{eqGronwall}.

As a consequence we can estimate the $L^p$-norm of $v - \vtil$ 
over the annulus $A_{r_1,r_2}$ where $R_0 < r_1 < R_1 < r_2$. We have
\[\begin{split}
\| v-\vtil \|^p_{L^p(A_{r_1,r_2})}
&=
\int_{A_{r_1,r_2}} (v-\vtil)^p \, d\mu^{b} \\
&\leq 
\int_{A_{r_1,r_2}} (2v_+)^{p-1} (v-\vtil) \, d\mu^{b} \\
&=
\int_{r_1}^{r_2} 
(2v_+)^{p-1} \sinh^{n-1} r 
\left( \int_{S^{n-1}} (v(r,\theta)-\vtil(r,\theta)) \, d\mu^{\sigma} \right)
\, dr\\
&\leq C \left( m^g - m^{\gtil} \right)
\end{split}\]
for some positive constant $C$.

We are now about to obtain the estimate stated in the lemma. The 
equation for $U - \Util$ reads 
\[
-\frac{4(n-1)}{n-2}\Delta^b (U-\Util)-n(n-1)\left(U-\Util\right)
=
-n(n-1) \left(U^{\kappa+1}-\Util^{\kappa+1}\right).
\]
Since $u_0$ is bounded we have  
\[
\|U-\Util\|_{L^p(A_{r_1,r_2})}
= 
\|u-\util\|_{L^p(A_{r_1,r_2})}
\leq 
C \left( m^g - m^{\gtil} \right)^{1/p}. 
\]
Here and in the rest of the proof the value of the positive constant 
$C$ might vary from line to line but remains independent of 
$(M,g,\Phi,U)\in \cA$. By the mean value theorem we have 
\[\begin{split}
U^{\kappa+1} - \Util^{\kappa+1}
&=
(\kappa+1) \left(tU+(1-t)\Util\right)^{\kappa} \left(U-\Util\right) \\
&\leq 
C U_+^{\kappa} \left( U - \Util \right) \\
&\leq C \left(U-\Util\right)
\end{split}\]
on $A_{r_1,r_2}$ for some $t \in [0,1]$. Hence
\[
\| U^{\kappa+1} - \Util^{\kappa+1} \|_{L^p(A_{r_1,r_2})}
\leq C \left(m^g - m^{\gtil}\right)^{1/p}. 
\]
Now elliptic regularity yields
\[
\left\|U-\Util\right\|_{W^{2,p}(A_{r'_1,R_1})}
\leq C \left(m^g - m^{\gtil}\right)^{1/p},
\]
where $r_1 < r'_1 < R_1$, and by embedding theorems we conclude that 
\[
\sup_{A_{r'_1,R_1}}\left|U-\Util\right|
\leq C \left(m^g - m^{\gtil}\right)^{1/p}.
\]

Set $\phi \definedas \log U$ and $\phitil \definedas \log \Util$. 
Then $\phi - \phitil$ is non-negative, tends to zero at infinity, 
and satisfies 
\[
-\frac{4(n-1)}{n-2}\left(\Delta^b (\phi-\phitil) 
+ \langle d(\phi-\phitil),d(\phi+\phitil)\rangle_b\right)
+n(n-1)\left(e^{\kappa \phi}-e^{\kappa \phitil}\right) = 0.
\]
If the maximum of $\phi - \phitil$ is attained at an interior point
of $\bH^n \setminus B_{R_1}$ we get a contradiction, and thus
\[
\log U - \log \Util 
\leq 
\sup_{\partial B_{R_1}} \left( \log U - \log \Util \right)
\]
on $\bH^n \setminus B_{R_1}$. By the mean value theorem we have
\[
\log U - \log \Util 
= 
\frac{U-\Util}{tU+(1-t)\Util}
\begin{dcases}
\geq \frac{U-\Util}{U_+(R_1)} \\
\leq 
\frac{U-\Util}{U_-(R_1)}
\end{dcases}
\]
for some $t \in [0,1]$. Thus 
\[\begin{split}
U - \Util
&\leq
U_+(R_1) \left( \log U - \log \Util \right) \\
&\leq
U_+(R_1) \sup_{\partial B_{R_1}} \left( \log U - \log \Util \right) \\
&\leq
\frac{U_+(R_1)}{U_-(R_1)}
\sup_{\partial B_{R_1}} \left( U - \Util \right) \\
&\leq
C \left(m^g - m^{\gtil}\right)^{1/p}
\end{split}\]
on $\bH^n \setminus B_{R_1}$, which concludes the proof of the proposition.
\end{proof}

\subsection{Proof of lemmas}

We now complete the proof of Theorem \ref{thmNearPMT1} by proving the 
lemmas stated in Subsection \ref{secStrategy}.

\begin{proof}[Proof of Lemma \ref{lmBoundV}]
We first construct $V_\pm$. The construction being lengthy, we give only
the argument for $V_+$. We want $V_+$ to be a supersolution for Equation
\eqref{eqEigenVect},
\[
-\Delta^g V_+ + n V_+ \geq 0.
\]
Since $g = U^\kappa b$ on $\bH^n \setminus B_{R_0}(0)$ the previous
inequality is equivalent to
\[
- \Delta^b V_+ 
- 2 \left\< \frac{dU}{U}, dV_+\right\> 
+ n U^\kappa V_+ 
\geq 0.
\]
We choose $V_+$ to be a function of $r$ so
\[
- V_+'' - (n-1) \coth r V_+' 
- 2 \frac{\partial_r U}{U} V_+' 
+ n U^\kappa V_+ 
\geq 0,
\]
where a prime denotes a derivative with respect to $r$. From Lemma 
\ref{lmUniformBound}, there exists a universal constant $A'_1$ depending 
only on $R'_0$ such that $\frac{\partial_r U}{U} \leq A'_1 e^{-n r}$. 
Assuming that $V_+, V_+' \geq 0$, the previous inequality will be 
satisfied provided that
\begin{equation}\label{eqSurSolution}
-V_+'' - (n-1) \coth r V_+' - 2 A'_1 e^{-n r} V_+' 
+ n \phi_-^\kappa V_+ = 0, 
\end{equation}
where $\phi_-$ is the anti-de~Sitter-Schwarzschild solution vanishing 
at $r = R_0$. Let $\lambda$ be a positive real number to be chosen later. 
From standard theory, there exists a unique solution to Equation 
\eqref{eqSurSolution} defined on $[R'_0, \infty)$ such that $V_+(R'_0) 
= \lambda$ and $V_+'(R'_0) = 0$.

We first claim that $V_+$ and $V'_+$ are both positive functions on 
$(R'_0, \infty)$. Indeed, rewriting Equation \eqref{eqSurSolution} as
\begin{equation}\label{eqSurSolution2}
V_+'' + \left( (n-1) \coth r + 2 A'_1 e^{-n r} \right) V_+' 
= 
n \phi_-^\kappa V_+,
\end{equation}
setting $R \definedas \inf \{r > R'_0, V_+(r) \leq 0\}$, and assuming 
that $R < \infty$, we have $V_+ > 0$ on $(R'_0, R)$ and $V_+(R) = 0$. 
Hence, regarding \eqref{eqSurSolution2} as a first order homogeneous 
ordinary differential equation for $V_+'$, we conclude that $V_+' > 0$ 
on $(R'_0, R)$. In particular, $V_+(R) \geq V_+(R'_0) = \lambda > 0$. 
This contradicts the definition of $R$. The claim is proved.

Next we prove that $V_+ = \alpha \lambda \cosh r + O(e^{-(n-1)r})$ for 
some constant $\alpha > 0$. Hence setting $\lambda = 1/\alpha$, 
we get a supersolution to Equation \eqref{eqEigenVect} such that 
$V_+ \sim \cosh r = V_{(0)}$. To prove this second claim we set 
$V_+(r) \definedas \cosh r v_+(r)$. By a straightforward calculation, 
we find that $V_+$ satisfies \eqref{eqSurSolution} if and only if $v_+$ 
satisfies 
\[\begin{split}
& v_+'' + \left(2 \tanh r + (n-1) \coth r + 2 A'_1 e^{-n r}\right) v_+' \\
&\qquad \qquad + 
\left(2 A'_1 e^{-n r} \tanh r + n \left(\phi_-^\kappa - 1\right)\right) v_+ 
= 0.
\end{split}\]
From the first claim we have $v_+ > 0$. We introduce 
$k_+ \definedas \frac{v_+'}{v_+}$ and obtain the following Riccati 
equation for $k_+$,
\begin{equation}\label{eqSurSolution4}
\begin{split}
&k_+' + k_+^2 + \left(2 \tanh r + (n-1) \coth r 
+ 2 A'_1 e^{-n r}\right) k_+ \\
&\qquad \qquad + 
\left(2 A'_1 e^{-n r} \tanh r 
+ n \left(\phi_-^\kappa - 1\right)\right) = 0.
\end{split}
\end{equation}
Without loss of generality, we can assume that $A'_1$ is chosen so 
large that
\[
2 A'_1 e^{-n r} \tanh r + n \left(\phi_-^\kappa - 1\right) \geq 0
\]
on $(R'_0, \infty)$. From the boundary condition $V_+'(R'_0) = 0$ we
have 
\[
v_+'(R'_0) = - \tanh R'_0 v_+(R'_0) < 0.
\]
It is then fairly straightforward to argue that $-1 < k_+ < 0$ on 
$(R'_0, \infty)$. For this let $R$ be the smallest $r > R'_0$ such that 
$k_+(r) \geq 0$. Then $k_+(R) = 0$ and, from Equation 
\eqref{eqSurSolution4}, $k_+'(R) < 0$ so $k_+ (r) > 0$ for some $r$ 
slightly smaller than $R$, contradicting the definition of $R$. This 
estimate can be further refined. We select 
$\alpha \in \left(\frac{n}{2}, n\right)$ and set 
$k_+^- \definedas - e^{-\alpha(r-r_0)}$ for some $r_0$ to be chosen later. 
Then $k_+^- \geq -1$ on the interval $[r_0, \infty)$. Hence
\[\begin{split}
&(k_+^-)' + (k_+^-)^2 
+ \left(2 \tanh r + (n-1) \coth r + 2 A'_1 e^{-n r}\right) k_+^-\\
&\qquad 
= \left(\alpha + e^{-\alpha (r-r_0)} - 2 \tanh r - (n-1) \coth r 
- 2 A'_1 e^{-n r}\right) e^{-\alpha(r-r_0)}\\
&\qquad 
\leq \left(\alpha - n\right) e^{-\alpha(r-r_0)},
\end{split}\]
where we used the inequality
\[\begin{split}
2\tanh r + (n-1) \coth r
&= 
2\left(\frac{1}{\coth r} + \coth r \right) + (n-3) \coth r \\
&\geq 
2 + (n-3) \coth r \\
&\geq 
n-1.
\end{split}\]
Consequently, choosing $r_0$ large enough, we can ensure that
\[\begin{split}
&(k_+^-)' + (k_+^-)^2 
+ \left(2 \tanh r + (n-1) \coth r + 2 A'_1 e^{-n r}\right) k_+^- \\
&\qquad \qquad + 
\left(2 A'_1 e^{-n r} \tanh r + n \left(\phi_-^\kappa - 1\right)\right) < 0
\end{split}\]
on the interval $[r_0, \infty)$. A slight modification of the previous
argument shows that $k_+^- \leq k_+ \leq 0$. Equation \eqref{eqSurSolution4}
then implies $k_+' = O(e^{-n r})$. Together with the fact that $k_+ \to 0$
at infinity, this implies $k_+ = O(e^{-n r})$. Thus we infer that
\[
\log v_+(r) = \log \lambda + \mu + O(e^{-nr})
\] 
for some constant $\mu$. Hence,
\[
v_+(r) = \lambda e^\mu + O(e^{-n r}).
\] 
This proves the second claim with $\alpha = e^{\mu}$.

Finally remark that since $k_+(r) = \frac{v'_+(r)}{v_+(r)} \leq 0$ and 
$v_+ \to 1$ at infinity, it follows that $v_+(r) \geq 1$ so $V_+ \geq V_{(0)}$.

The construction of the subsolution $V_-$ on $\bH^n \setminus B_{R'_0}$ 
is entirely similar. The only difference is that we select 
$V_-(R'_0) = 0$ and $V_-'(R'_0) > 0$. The function $V_-$ then satisfies 
$V_- \leq V_{(0)}$.

From now on we will work on the entire manifold $M$. Using the 
diffeomorphism $\Phi$ we define open sets $B'_R$ in $M$ through the 
relation $\Phi( M \setminus B'_R ) = \bH^n \setminus B_{R}$ for 
$R \geq R_0$. The set $B'_R$ is the part of $M$ inside an approximate
geodesic sphere in the asymptotically hyperbolic end. By abusing 
notation we consider the functions $V_{\pm}$ and $V_{(0)}$ as defined 
on $M \setminus K$ through the diffeomorphism $\Phi$. 

Our proof of existence of the function $V^g$ follows \cite{GrahamLee}. 
For any $r > R'_0$ there exists a unique function $V^r$ solving 
\eqref{eqEigenVect} inside the sphere of radius $r$ with Dirichlet 
data $V = V_{(0)} $ on $\partial B'_{r}$. From the maximum 
principle, $V^r \geq 0$. Then a second application of the maximum 
principle in the annulus $\overline{B'}_{r} \setminus B'_{R'_0}$ 
yields $V^r \geq V_-$. We extend the function $V_+$ by $\lambda$
on $B'_{R'_0}$. This new function $V_+$ is a $C^1$-supersolution of 
\eqref{eqEigenVect} in the weak sense. Hence $V^r \leq V_+$ (see 
for example \cite[Theorem 8.1]{GilbargTrudinger} for more details). 
In particular, the functions $V^r$ are uniformly bounded on compact 
subsets. Then a standard argument using elliptic regularity and a 
diagonal extraction process yields the existence of the function $V^g$. 
Similarly, we extend the function $V_-$ by zero on $B'_{R'_0}$. 
The function $V_-$ extended this way becomes a subsolution in the 
weak sense so the functions $V^r$ satisfy $V^r \geq V_-$. In the 
limit, the function $V^g$ is pinched between $V_-$ and $V_+$, that is
\[
V_- \leq V^g \leq V_+.
\] 
This proves that $V^g - V_{(0)} = O(e^{-(n-1)r})$.

We note that
\[
\Delta^g V_{(0)}
= U^{-\kappa} \left(n \cosh r + 2 \frac{\partial_r U}{U} \sinh r \right)
= n V_{(0)} + O(e^{-(n-1)r}).
\]
Hence,
\[
\left(- \Delta^g + n\right) \left(V^g - V_{(0)}\right)
= O(e^{-(n-1)r}).
\]
The estimates for $d(V^g -V_{(0)})$ and $\left|\nabla^{(k)} V^g\right|$
follow from standard elliptic regularity.

We finally prove uniqueness of $V^g$. Assume that $V_1$ is the function
we constructed before so that $V_- \leq V_1 \leq V_+$ and $V_2$ is another
function satisfying $\Delta V_2 = n V_2$, $V_2 \sim V_{(0)}$. From the
strong maximum principle we have $V_1 > 0$. We compute
\[\begin{split}
n V_2 
&= 
\Delta \left(\frac{V_2}{V_1} V_1\right)\\
&= 
V_1 \Delta \frac{V_2}{V_1} 
+ 2 \left\< d V_1, d\left(\frac{V_2}{V_1}\right)\right\> 
+ \frac{V_2}{V_1} \Delta V_1\\
&= 
V_1 \Delta \frac{V_2}{V_1} 
+ 2 \left\< d V_1, d\left(\frac{V_2}{V_1}\right)\right\> + n V_2, \\
\end{split} \]
so 
\[
0 = \Delta \frac{V_2}{V_1} 
+ 2 \left\< \frac{d V_1}{V_1}, d\left(\frac{V_2}{V_1}\right)\right\>.
\]
Since $V_1 \sim V_2$, the function $V_2/V_1$ tends to $1$ at infinity.
From the strong maximum principle (which can be applied here since if
$V_2/V_1$ is not constant, the maximum of $|V_2/V_1-1|$ is attained at 
some point $p \in M$), we conclude that $V_2/V_1 = 1$.
\end{proof}

\begin{proof}[Proof of Lemma \ref{lmConformalFactor}]
From Lemmas \ref{lmUniformBound} and \ref{lmBoundV}, there are
universal constants $B_0, B_1, \ldots$ such that 
\[
\left|\nabla^{(k)} T\right| \leq B_k
\]
for $k = 0, 1, \ldots$ on the support of $\chi$. Hence
\[
\left| g_s(X,X) - g(X,X) \right| 
= \left| s T(X, X)\right| \leq |s| B_0 g(X, X)
\]
for any $X \in TM$. So if $|s| \leq \frac{1}{2 B_0}$ we have
\[
\frac{1}{2} g(X,X) \leq g_s(X,X) \leq \frac{3}{2} g(X,X).
\]
We denote by $\nabla^{g_s}$ the Levi-Civita connection of $g_s$. The
difference between $\nabla^{g_s}$ and $\nabla^{g_0}$ is a symmetric vector
valued 2-tensor $\Gamma(s)$,
\[
\nabla^{g_s}_X Y - \nabla^{g_0}_X Y = \Gamma(s)(X, Y).
\]
In coordinates $\Gamma(s)$ is given by
\[ \begin{split}
\Gamma^k_{ij}(s)
&= \frac{1}{2} g_s^{kl} 
\left( 
\nabla_i (g_s)_{lj} + \nabla_j (g_s)_{il} - \nabla_l (g_s)_{ij}
\right)\\
&= 
\frac{s}{2} g_s^{kl} 
\left(
\nabla_i (\chi T_{lj}) + \nabla_j (\chi T_{il}) - \nabla_l (\chi T_{ij})
\right),
\end{split} \]
where we have denoted by $\nabla = \nabla^{g_0}$ the Levi-Civita 
connection of the metric $g_0 = g$. The scalar curvature of the metric 
$g_s$ can be written as follows,
\[
\scal^{g_s}
= g_s^{ij} \ric^{g_0}_{ij}
+ g_s^{jl} \left(\nabla_i \Gamma_{jl}^i(s) - \nabla_l \Gamma_{ij}^i(s)
+ \Gamma_{ip}^i(s) \Gamma_{jl}^p(s) - \Gamma_{lp}^i(s) \Gamma_{ij}^p(s) \right).
\]
From this formula it is not complicated to see that there is a constant 
$s_0 > 0$, $s_0 \leq \frac{1}{2 B_0}$, depending only on $B_0$, $B_1$, 
$B_2$ and $n$ such that
\[
\left|\scal^{g_s} - \scal^g\right| \leq n-1
\]
for $|s| \leq s_0$. From the bound on $\scal^{g_s}$ it follows that 
the constant functions 
$\phi_- = \left(\frac{n-1}{n}\right)^{1/\kappa}$ and
$\phi_+ =\left(\frac{n+1}{n}\right)^{1/\kappa}$ are
respectively a sub-solution and a super-solution of the Yamabe equation
\begin{equation}\label{eqYamabegs}
-\frac{4(n-1)}{n-2} \Delta^{g_s} \phi_s + \scal^{g_s} \phi_s 
+ n(n-1) \phi_s^{\kappa+1}=0.
\end{equation}
Arguing as in the proof of Proposition \ref{propAtoA0} there exists 
a unique solution $\phi_s$ of \eqref{eqYamabegs} such that $\phi_s$ is 
bounded from above and away from zero. Further 
$\phi_- \leq \phi_s \leq \phi_+$.

We next prove that the map $s \mapsto \phi_s$ is $C^2$. We consider 
the map
\[\begin{array}{rccl}
\Xi : & \Omega \times [-s_0, s_0] & \to & X^{0, p}_\delta\\
& (u, s) & \mapsto & -\frac{4(n-1)}{n-2} \Delta^{g_s} u 
+ \scal^{g_s} (u+1) + n (n-1) (u+1)^{\kappa+1},
\end{array}\]
where $\Omega = \{u \in X^{2, p}_\delta, u > -1\}$. Hence, for any
$s \in [-s_0, s_0]$, $u_s = \phi_s - 1$ is the only solution to the 
equation $\Xi(u, s) = 0$. Further $\Xi$ is a $C^2$ function.
The differential of $\Xi$ with respect to $u$ at any point $(u_s, s)$ 
is given by
\[\begin{array}{rccl}
D_u \Xi(u_s, s) : & X^{2, p}_\delta & \to & X^{0, p}_\delta\\
& v & \mapsto & -\frac{4(n-1)}{n-2} \Delta^{g_s} v 
+ \left(\scal^{g_s} + (\kappa+1) n (n-1) \phi_s^\kappa\right) v.
\end{array}\]
We remark that
\[\begin{split}
\scal^{g_s} + (\kappa+1) n (n-1) \phi_s^\kappa
&\geq 
-(n+1)(n-1) + (\kappa+1) (n-1)^2\\
&\geq 
\frac{2n(n-1)}{n-2},
\end{split}\]
from which it follows that the $L^2$-kernel of $D_u \Xi(u_s, s)$ is zero. 
From the Fredholm alternative (see 
\cite[Proof of Proposition 5.1]{GicquaudSakovich}), we conclude that
$D_u \Xi(u_s, s)$ is invertible. Using the implicit function theorem, 
this proves that the map $s \mapsto \phi_s - 1 \in X^{2, p}_\delta$ is 
$C^2$.

To prove the asymptotics of $\phi_s$, remark that the metric $\lambda_s$ 
falls into the class $\cA(R_1'')$. Hence the estimates \eqref{eqEstimatesPhi} 
are consequences of Lemma \ref{lmUniformBound}.
\end{proof}

\begin{proof}[Proof of Lemma \ref{lmMassLambda}]
We first estimate the first and second derivatives of $\phi_s$ with respect 
to $s$. We differentiate Equation \eqref{eqYamabegs} with respect to $s$ 
and find the following equation for $\dot{\phi}_s$,
\[\begin{split}
&-\frac{4(n-1)}{n-2} \Delta^{g_s} \dot{\phi}_s + \scal^{g_s} \dot{\phi}_s 
+ (\kappa+1)n(n-1) \phi_s^{\kappa} \dot{\phi}_s\\
&\qquad 
= \frac{4(n-1)}{n-2} \pdiff{\Delta^{g_s}}{s} \phi_s 
- \pdiff{\scal^{g_s}}{s} \phi_s.
\end{split}\]
Note that the right hand side has support in the annulus $A_{R'_0, R_1}$. Thus,
by Lemma \ref{lmConformalFactor}, it is bounded by some universal constant
$C$. We also remark that, since 
\[
\scal^{g_s} + (\kappa+1) n (n-1) \phi_s^\kappa > \frac{2n(n-1)}{n-2}
\] 
and since $\dot{\phi_s}$ tends to zero at infinity (this is a consequence 
of $\dot{\phi_s} \in X^{2, p}_\delta$), we have
\[
\sup |\dot{\phi_s}| \leq \frac{n-2}{2n(n-1)} C.
\]
By standard techniques one can then prove that 
$\left\|\dot{\phi_s}\right\|_{X^{2, p}(\bH^n \setminus B_{R'_0})} \leq C$ for
some universal constant $C$. The same strategy can then be used to study the
second order derivative of $\phi_s$. However, the calculations are lengthy
and we do not include the argument here.

The last step is to prove that $H(s)$ is a $C^2$ function of $s$.
For this we write $H(s)$ as follows (see 
\cite[page 114]{HerzlichMassFormulae} or \cite{ChruscielHerzlich} for 
more details),
\[\begin{split}
H(s)
&= 
H_\Phi^{\lambda_s} (V)\\
&= 
\int_{S_{R_2}} \left(
V (\divg^b e_s - d \tr^b e_s) + (\tr^b e_s) dV - e_s(\nabla^b V, \cdot)
\right) (\nu_{R_2}) \, d \mu^b\\
&\qquad 
+ \int_{\bH^n \setminus B_{R_2}} 
\left(
V \left(\scal^{\lambda_s} - \scal^b\right) + \mathcal{Q}(e_s, V)
\right) 
\, d \mu^b\\
&= 
\int_{S_{R_2}} \left(
V (\divg^b e_s - d \tr^b e_s) + (\tr^b e_s) dV - e_s(\nabla^b V, \cdot)
\right) (\nu_{R_2}) \, d \mu^b\\
&\qquad + \int_{\bH^n \setminus B_{R_2}} \mathcal{Q}(e_s, V) \, d \mu^b
\end{split}\]
where $e_s = \lambda_s - b = (\phi_s^\kappa U^\kappa - 1) b$, and
$\mathcal{Q}(V, e)$ is an expression which is linear in $V$, quadratic 
in $e_s$ and its first derivatives, and cubic in $(\lambda_s)^{-1}$. 
It corresponds to the negative of the non-linear terms in the Taylor 
expansion of
\[
\int_{\bH^n \setminus B_{R_2}} V \left(\scal^{\lambda_s} - \scal^b\right) \, d \mu^b
\]
with respect to $e_s = \lambda_s - b$. Since $\lambda_s = (U \phi_s)^\kappa b$
on $\bH^n \setminus B_{R_2}$, this expression can be explicitly computed,
\[
\mathcal{Q}(e_s, V) =
 V \left(
(n-1)\left(\frac{1}{\psi_s^2}-1\right) \Delta^b \psi_s
 + n (n-1) \frac{(\psi_s-1)^2}{\psi_s}
 + \frac{(n-1)(n-6)}{4 \psi_s} \left|\frac{d\psi_s}{\psi_s}\right|^2_b 
\right),
\]
where $\psi_s \definedas (U \phi_s)^\kappa$. Written in this form, one can 
conclude from standard theorems on differentiation of integrals
that $H(s)$ depends on $s$ in a $C^2$ fashion.

From the estimates we have found for $\dot{\phi}(s)$ and $\ddot{\phi}(s)$
together with Lemmas \ref{lmUniformBound} and \ref{lmConformalFactor} it 
is not complicated to deduce that $\dot{H}(s)$ and $\ddot{H}(s)$ are
uniformly bounded on the interval $[-s_0, s_0]$.
\end{proof}

\begin{proof}[Proof of Lemma \ref{lmVariationMass}]
For $\lambda_s$ we have 
\[
e_s 
= \lambda_s - b 
= \phi_s^{\kappa} 
\left(g + s \chi \left(\tlric^g - \frac{\tlhess V^g}{V^g}\right)\right) - b,
\]
the derivative of this with respect to $s$ evaluated at $s=0$ is 
\[
\dot{e} = 
\kappa \dot{\phi} g + \chi \left(\tlric^g - \frac{\tlhess V^g}{V^g}\right) 
=: e^1 + e^2 .
\]
The conformal factors $\phi_s$ satisfy the Yamabe equation 
\[
-\frac{4(n-1)}{n-2} \Delta^{g_s} \phi_s + \scal^{g_s} \phi_s
= -n (n-1) \phi_s^{\kappa+1}.
\]
Differentiating this at $s = 0$ and using the fact that $\phi_0 = 1$ 
we find that
\[
-\frac{4(n-1)}{n-2} \Delta^g \dot{\phi}
+ \dot{\scal}^g(\dot{g}) + \scal^g \dot{\phi}
= -n(n-1)\frac{n+2}{n-2} \dot{\phi}, 
\]
or
\begin{equation} \label{phidot}
\frac{4(n-1)}{n-2}
\left( \Delta^g \dot{\phi} - n \dot {\phi} \right)
= \dot{\scal}{}^g(\dot{g}).
\end{equation}

We compute
\begin{equation} \label{mdot}
\begin{split}
\dot{H} (0) 
&= 
\frac{d}{ds} \left( H_{\Phi}^{\lambda_s} (V) \right)_{|s=0} \\
&=
\lim_{r \to \infty} \int_{S_r} \left(
V (\divg^b \dot{e}- d \tr^b \dot{e}) 
+ (\tr^b \dot{e}) dV - \dot{e}(\nabla^b V, \cdot)
\right) (\nu_r) \, d \mu^b \\
&=
\lim_{r \to \infty} \int_{S_r} \left(
V^g (\divg^g \dot{e}- d \tr^g \dot{e}) 
+ (\tr^g \dot{e}) dV^g - \dot{e}(\nabla^g V^g, \cdot)
\right) (\nu_r^g) \, d \mu^g,
\end{split}
\end{equation}
where we can change from the metric $b$ to the metric $g$ since 
$g$ is asymptotically hyperbolic and the function $V^g$ has the 
asymptotics specified in Lemma \ref{lmBoundV}. Note that since 
$e_2$ has compact support its contribution to $\dot{H}(0)$ is zero. 
For the terms with $e^1 = \kappa \dot{\phi} g$ in \eqref{mdot} we 
have
\[\begin{split}
\dot{H}(0)
&= 
\lim_{r \to \infty} \int_{S_r} \left(
V^g (\divg^g e^1- d \tr^g e^1) 
+ (\tr^g e^1) dV^g - e^1(\nabla^g V^g, \cdot)
\right) (\nu_r^g) \, d \mu^g \\
&=
\lim_{r \to \infty} \frac{4(n-1)}{(n-2)} \int_{S_r} 
\left( \dot{\phi} dV^g - V^g d \dot{\phi} \right) 
(\nu_r^g) \, d \mu^g \\
&=
\frac{4(n-1)}{(n-2)} \int_{M} 
\divg^g \left( \dot{\phi} dV^g - V^g d \dot{\phi} \right) 
\, d \mu^g \\
&=
\frac{4(n-1)}{(n-2)} \int_{M} 
\left(\dot{\phi} \Delta^g V^g - V^g \Delta^g \dot{\phi} \right) 
\, d \mu^g \\
&=
\frac{4(n-1)}{(n-2)} \int_{M} V^g
\left(n \dot{\phi} - \Delta^g \dot{\phi} \right) 
\, d \mu^g \\
&=
- \int_{M} V^g \dot{\scal}{}^g(\dot{g}) \, d \mu^g ,
\end{split}\]
where the last equality was obtained using \eqref{phidot}. Here
$\dot{g} = \chi \left(\tlric^g - \frac{\tlhess V^g}{V^g}\right)$
is traceless. So from the formula for the first variation of scalar
curvature, see \cite[Theorem~1.174]{Besse}, we obtain
\[\begin{split}
\dot{\scal}{}^g(\dot{g})
&= 
\divg^g \divg^g \dot{g} - \Delta^g \tr^g \dot{g} - \<\dot{g}, \ric^g\>_g\\
&= 
\divg^g \divg^g \dot{g} - \<\dot{g}, \tlric^g\>_g\\
&= 
- \chi \left| \tlric^g \right|_g^2 
+ \frac{\chi}{V^g} \left\< \tlhess V^g, \tlric^g \right\>
+ \divg^g \divg^g (\chi \tlric^g) 
- \divg^g \divg^g \frac{\chi \tlhess V^g}{V^g}.
\end{split}\]
Thus, replacing this expression in the formula for $\dot{H}(0)$ and 
integrating by parts, we get
\[\begin{split}
\dot{H}(0)
&= 
\int_M \chi V^g \left| \tlric^g \right|_g^2 \, d\mu^g
- 2 \int_M \chi \left\< \tlric^g, \tlhess V^g \right\>_g \, d\mu^g\\
&\qquad 
+ \int_M \chi \frac{1}{V^g} \left| \tlhess V^g \right|_g^2 \, d\mu^g\\
&= 
\int_M \chi V^g 
\left| \tlric^g - \frac{1}{V^g} \tlhess V^g\right|_g^2 \, d\mu^g.
\end{split}\]
\end{proof}

\section{An alternative argument for spin manifolds}
\label{secSpincase}


In this section we will prove a version of Theorem \ref{thmNearPMT1} 
with an argument using spinors. This follows closely the ideas of  
\cite[Section~12]{BrayPenrose}, see also the appendix of 
\cite{LeeNearEquality}. We only give a sketch of the argument. We first 
introduce the following class of asymptotically hyperbolic manifolds. 

\begin{definition} 
For $R_0 > 0$, we define the class $\cA^{\spin}(R_0)$ of 4-tuples
$(M, g, \Phi, U)$ such that
\begin{itemize}
\item 
$(M, g)$ is a complete Riemannian spin manifold which is asymptotically 
hyperbolic with respect to $\Phi$, where $\Phi$ is a diffeomorphism from 
the exterior of a compact set $K \subset M$ to 
$\bH^n \setminus \overline{B}_{R_0}$;
\item 
$\scal^g \geq -n(n-1)$, and $\scal^g = -n(n-1)$ on $M \setminus K$;
\item
$U$ is a positive function on $\bH^n \setminus B_{R_0}$ such that 
$U \to 1$ at infinity and $\Phi_* g = U^\kappa b$;
\item
the coordinates at infinity $\Phi$ are balanced.
\end{itemize}
\end{definition}

We prove the following theorem on the near-equality case of the 
positive mass theorem for spin manifolds.

\begin{maintheorem} \label{thmNearPMT2}
Let $R_1 > R_0$ and $\epsilon > 0$. There is a constant $\delta > 0$
so that  
\[
|U-1| \leq \epsilon e^{-nr}
\]
on $\bH^n \setminus B_{R_1}$ for all 
$(M, g, \Phi, U) \in \cA^{\spin}(R_0)$ with $m^g < \delta$.
\end{maintheorem}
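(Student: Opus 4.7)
The argument is by contradiction, following the spirit of the proof of Theorem \ref{thmNearPMT1} but replacing the variational analysis of Subsection \ref{secStrategy} with Witten's spinorial identity. The key fact is that for spin manifolds the mass dominates a functional measuring the failure of an imaginary Killing spinor to exist. For each $(M, g, \Phi, U) \in \cA^{\spin}(R_0)$, let $\psi^g$ be the Witten spinor associated to $V_{(0)}$: the unique solution of the Dirac--Witten equation $D^g \psi + \tfrac{n}{2} i\, \psi = 0$ on $M$ asymptotic at infinity to the imaginary Killing spinor $\psi_0$ on $\bH^n$ with $|\psi_0|^2 = V_{(0)} = \cosh r$. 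Writing $\nabla^{1/2}_X \psi \definedas \nabla^g_X \psi + \tfrac{i}{2} X \cdot \psi$ for the associated Killing connection, the Wang--Chru\'sciel--Herzlich integration formula reads
\[
2(n-1) \omega_{n-1} m^g \;=\; 4 \int_M |\nabla^{1/2} \psi^g|_g^2 \, d\mu^g + \int_M (\scal^g + n(n-1))\, |\psi^g|_g^2 \, d\mu^g,
\]
so the hypothesis $\scal^g \geq -n(n-1)$ yields the mass-dominated bound $\int_M |\nabla^{1/2} \psi^g|_g^2\, d\mu^g \leq C_n\, m^g$.

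Suppose by contradiction that there exist $\epsilon > 0$ and a sequence $(M_k, g_k, \Phi_k, U_k) \in \cA^{\spin}(R_0)$ with $m^{g_k} \to 0$ and $\sup_{\bH^n \setminus B_{R_1}} |U_k - 1| \geq \epsilon$. By Lemma \ref{lmUniformBound}, after passing to a subsequence, $U_k \to U_\infty$ in $C^\infty_{\mathrm{loc}}(\bH^n \setminus B_{R_0})$, where $U_\infty$ solves \eqref{eqYamabeb}, satisfies $U_\infty - 1 = O(e^{-n r})$, and $\sup_{\bH^n \setminus B_{R_1}} |U_\infty - 1| \geq \epsilon$. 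Set $g_\infty \definedas U_\infty^\kappa b$ on the end. Using conformal covariance of the Dirac operator together with elliptic regularity for the Dirac--Witten system, and the vanishing $\|\nabla^{1/2, g_k} \psi^{g_k}\|_{L^2} \to 0$, extract a subsequential limit $\psi^\infty$ of the spinors $(\Phi_k)_* \psi^{g_k}$ on the end; the limit is a nontrivial spinor on $(\bH^n \setminus B_{R_0}, g_\infty)$ asymptotic to $\psi_0$ at infinity and satisfying $\nabla^{1/2, g_\infty} \psi^\infty = 0$ pointwise.

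The last step is a rigidity argument: an imaginary Killing spinor on the conformally hyperbolic open set $(\bH^n \setminus B_{R_0}, U_\infty^\kappa b)$ which is asymptotic to the standard Killing spinor $\psi_0$ forces $U_\infty \equiv 1$. Indeed, the integrability condition for imaginary Killing spinors forces $g_\infty$ to be Einstein with $\ric^{g_\infty} = -(n-1) g_\infty$; combining this with the fact that $g_\infty$ is conformal to $b$ and tends to $b$ at infinity yields $g_\infty = b$ on the end, so $U_\infty \equiv 1$, contradicting $\sup |U_\infty - 1| \geq \epsilon$. The exponential decay $|U - 1| \leq \epsilon e^{-n r}$ then follows from Proposition \ref{propUControlledDecay} exactly as at the end of the proof of Theorem \ref{thmNearPMT1}.

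The main technical obstacle is the rigidity step, namely controlling the conformal factor from the existence of an asymptotic imaginary Killing spinor on an end; one must exploit both the Einstein condition from the integrability identity and the specific asymptotic behavior of $\psi^\infty$ at infinity. A secondary difficulty is justifying the compactness needed to extract the limit spinor $\psi^\infty$, since the Witten spinors $\psi^{g_k}$ live on different manifolds $M_k$. This is handled on the end via the charts $\Phi_k$, where conformal covariance of the Dirac operator identifies the relevant spinors with sections of a fixed spin bundle on $\bH^n \setminus B_{R_0}$, reducing the problem to a standard weighted elliptic compactness statement.
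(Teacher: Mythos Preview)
Your overall strategy is sound and close in spirit to the paper's, but there is a factual slip at the start: there is no imaginary Killing spinor $\psi_0$ on $\bH^n$ with $|\psi_0|^2 = V_{(0)} = \cosh r$. As recalled just before \eqref{schr-lichn}, the squared norm $|\phi_s|^2$ of any Killing spinor is a \emph{null} element of $\cN$, of the form $V_{(0)} - \sum_i a_i V_{(i)}$ with $(a_i) \in S^{n-1}$; the function $V_{(0)}$ itself is timelike and never arises this way. Consequently the Witten identity only yields $\tfrac14 H_\Phi(V_s)$ on the right for a null $V_s$, not $\tfrac14 H_\Phi(V_{(0)})$. The repair is easy: either observe that in balanced coordinates $H_\Phi(V_{(i)}) = 0$ for $i \geq 1$, so $H_\Phi(V_s) = H_\Phi(V_{(0)}) = 2(n-1)\omega_{n-1} m^g$ for any such null $V_s$ and a single Witten spinor suffices; or do what the paper does and take two spinors with $V_{s^\pm} = V_{(0)} \pm V_{(1)}$ and sum, so that the $V_{(1)}$-contributions cancel by linearity.

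Once this is corrected, your route and the paper's differ mainly in how the limit is taken. You propose to extract a subsequential limit of the global Witten spinors $\psi^{g_k}$ restricted to the end; this is workable but requires uniform weighted elliptic estimates on $\psi^{g_k} - \Phi_k^*\phi_s$ independent of the unknown compact parts $K_k \subset M_k$, and one has no boundary control at $\partial B_{R_0}$. The paper sidesteps the issue of varying manifolds entirely: it introduces a functional $\cF_s(U)$ defined \emph{only} on the fixed domain $\bH^n \setminus B_{R_1}$ as an infimum over test spinors (the minimizer satisfies a Neumann condition at the inner sphere), bounds it by $\tfrac14 H_\Phi(V_s)$ via \eqref{schr-lichn}, and proves directly that $U \mapsto \cF_s(U)$ is continuous in $C^1$ (Lemma~\ref{Fscontinuous}). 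The contradiction then follows from $\cF(U_\infty) = \lim \cF(U_k) = 0$ and the rigidity Lemma~\ref{cF=0}, whose proof is exactly your Einstein-plus-conformally-flat argument. Your approach buys a more direct geometric picture (an actual limiting Killing spinor), while the paper's functional framework avoids the spinor-compactness bookkeeping altogether.
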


We fix the constant $R_0 > 0$ and abbreviate $\cA^{\spin}(R_0) = \cA^{\spin}$.
We begin by describing the relationship between Killing spinors and 
the asymptotically hyperbolic mass, for this we follow closely the 
discussion in \cite[Section~4]{ChruscielHerzlich}.

Since $M$ is a spin manifold there is a spin structure and an associated
spinor bundle $\Sigma M$ on $(M,g)$. On $\Sigma M$ we define the 
connection $\widehat{\nabla}^g$ by 
\[
\widehat{\nabla}^g_X \phi \definedas 
\nabla^g_X \phi + \frac{i}{2} X \cdot \phi.
\]
Here  $\nabla^g$ is the Levi-Civita connection for the metric $g$, $\phi$ 
is a section of the spinor bundle, and the dot denotes the Clifford action 
of tangent vectors on spinors. Spinors $\phi$ which are parallel with 
respect to $\widehat{\nabla}^g$ are called (imaginary) Killing spinors.

We will now describe the Killing spinors on hyperbolic space. The ball 
model of hyperbolic space is given by the metric 
$\omega^{-2} \xi$ where $\omega(x) = \frac{1}{2}(1 - |x|^2)$ and $\xi$ is 
the flat metric on the open unit ball $B^n$ in $\bR^n$. In this model 
the Killing spinors on $\bH^n$ are all spinors of the form
\[
\phi_s(x) = \omega(x)^{-1/2} (1 - i x \cdot) s.
\]
Here $s$ is a constant spinor on $(B^n,\xi)$, or equivalently an element
of the spinor representation space $\Sigma$. For the Clifford action 
we identify points in $B^n$ with tangent vectors. For any Killing spinor 
$\phi_s$ on $\bH^n$ its squared norm $V_s \definedas |\phi_s|^2$ is an 
element of $\cN$. Every element of $\cN$ of the form 
$V_{(0)} - \sum_{i=1}^n a_i V_{(i)}$ where $(a_1, \dots, a_n) \in S^{n-1}$ 
is equal to $V_s$ for some Killing spinor $\phi_s$.

Using the connection $\widehat{\nabla}^g$ we define the Dirac operator
$\widehat{D}^g$ by 
\[
\widehat{D}^g \phi \definedas 
\sum_{i=1}^n e_i \cdot \widehat{\nabla}^g_{e_i} \phi
= D^g \phi - \frac{in}{2} \phi, 
\]
where $e_i$, $i=1,\dots,n$, is an orthonormal frame for $g$ and 
$D^g = \sum_{i=1}^n e_i \cdot \nabla^g_{e_i}$ is the Dirac operator associated
to $\nabla^g$. The Schr\"odinger-Lichnerowicz formula for $\widehat{D}^g$
has a boundary term related to the asymptotically hyperbolic mass. 
If $(M,g)$ is an asymptotically hyperbolic manifold with diffeomorphism 
$\Phi : M \setminus K \to \bH^n \setminus B$ at infinity, 
then the Killing spinor $\phi_s$ on $\bH^n$ can be pulled back to a 
spinor $\Phi^* \phi_s$ on $M \setminus K$. If $\psi_s$ is a spinor on $M$
with $\widehat{D}^g \psi_s = 0$ and $\psi_s - \Phi^*\phi_s \to 0$ at 
infinity then the Schr\"odinger-Lichnerowicz formula for $\widehat{D}^g$ 
tells us that 
\begin{equation} \label{schr-lichn}
\int_M \left( 
|\widehat{\nabla}^g \psi_s|^2 
+ \frac{\scal^g + n(n-1)}{4} |\psi_s|^2 
\right) \, d\mu^g
= \frac{1}{4} H_{\Phi} (V_s),
\end{equation}
see \cite[(4.11) and (4.22)]{ChruscielHerzlich}. 

We denote by $H$ the space of positive smooth functions on 
$\bH^n \setminus B_{R_0}$ which satisfy the Yamabe equation 
\eqref{eqYamabeb} and tend to $1$ at infinity. In the proof of 
Theorem \ref{thmNearPMT2} we use the functionals $\cF_s$ defined 
for $U \in H$ by 
\[
\cF_s (U) \definedas 
\inf
\left\{
\int_{\bH^n \setminus B_{R_1}}
|\widehat{\nabla}^g \psi|_g^2 
\, d\mu^g
\mathrel{} \middle| \mathrel{}
\psi - \Phi^*\phi_s \to 0 \text{ at infinity}
\right\}
\]
where $g = U^\kappa b$ and $s \in \Sigma$.
The infimum is attained by a spinor satisfying
\begin{equation*} 
\left\{
\begin{aligned}
(\widehat{\nabla}^g)^* \widehat{\nabla}^g \psi 
&\definedas
- \left( \nabla^g_{e_i} - \frac{i}{2} e_i \cdot \right)
\left(\nabla^g_{e_i} + \frac{i}{2} e_i \cdot \right) \psi
= 0, \\
\widehat{\nabla}^g_{\nu} \psi &= 0 
\text{ at the inner boundary of $(\bH^n \setminus B_{R_1}, g)$}, \\
\psi - \Phi^*\phi_s &\to 0 \text{ at infinity.} 
\end{aligned}
\right.
\end{equation*}

The following Lemma is similar to \cite[Lemma~12, page~231]{BrayPenrose}.

\begin{lemma} \label{Fscontinuous}
$\cF_s$ is continuous with respect to the $C^1$ topology on $H$. 
\end{lemma}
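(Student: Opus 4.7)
The plan is to prove continuity of $\cF_s$ at an arbitrary $U \in H$ by establishing both upper and lower semi-continuity along any sequence $U_j \to U$ in the $C^1$-topology. Write $g = U^\kappa b$ and $g_j = U_j^\kappa b$, both defined on $\bH^n \setminus B_{R_0}$. First I would use the Bourguignon--Gauduchon construction to fix canonical fibrewise-isometric bundle isomorphisms $\Theta_j : \Sigma^{g_j} \to \Sigma^g$ intertwining Clifford multiplication modulo the musical isomorphism. Under this identification one finds
\[
\Theta_j \widehat{\nabla}^{g_j} \Theta_j^{-1} = \widehat{\nabla}^g + A_j,
\]
where $A_j$ is a zeroth-order operator depending algebraically on $g_j - g$ and on $\Gamma^{g_j} - \Gamma^g$. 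Since $U_j \to U$ in $C^1$ and both $U_j, U$ are bounded away from $0$, the tensors $g_j - g$ and $\Gamma^{g_j} - \Gamma^g$ tend to zero uniformly on $\bH^n \setminus B_{R_1}$, so $\|A_j\|_{C^0} \to 0$ and the volume densities satisfy $d\mu^{g_j} = (1+o(1))d\mu^g$ uniformly.

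For upper semi-continuity, let $\psi$ be the (unique) minimizer of $\cF_s(U)$; existence and uniqueness follow from coercivity of the quadratic form on the affine space of spinors pinned to $\Phi^*\phi_s$ at infinity. The transferred spinor $\Theta_j^{-1}\psi$ is admissible for $\cF_s(U_j)$, since $\Theta_j \to \mathrm{Id}$ at infinity (both $g_j$ and $g$ converge to $b$ there). Using the bounds from Step 1,
\[
\int_{\bH^n\setminus B_{R_1}} |\widehat{\nabla}^{g_j} \Theta_j^{-1}\psi|_{g_j}^2\,d\mu^{g_j} \longrightarrow \int_{\bH^n\setminus B_{R_1}} |\widehat{\nabla}^g \psi|_g^2\,d\mu^g = \cF_s(U),
\]
so $\limsup_j \cF_s(U_j) \leq \cF_s(U)$.

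For lower semi-continuity, let $\psi_j$ minimize $\cF_s(U_j)$. The upper bound just obtained shows $\cF_s(U_j)$ is uniformly bounded, hence $\|\widehat{\nabla}^{g_j}\psi_j\|_{L^2}$ is uniformly bounded. Using the Neumann condition at $\partial B_{R_1}$, elliptic regularity for the uniformly elliptic operator $(\widehat{\nabla}^{g_j})^*\widehat{\nabla}^{g_j}$, and the asymptotic pinning to $\Phi^*\phi_s$, I would derive uniform $W^{2,p}_{\mathrm{loc}}$ bounds together with uniform weighted decay estimates on $\psi_j - \Phi^*\phi_s$. Passing to a subsequence, $\Theta_j\psi_j \rightharpoonup \psi_\infty$ weakly in the appropriate weighted Sobolev space and strongly on compacta. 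The uniform decay ensures that $\psi_\infty$ still satisfies $\psi_\infty - \Phi^*\phi_s \to 0$, so $\psi_\infty$ is admissible for $\cF_s(U)$. Weak lower semi-continuity of the $L^2$-norm, together with $\widehat{\nabla}^{g_j} \to \widehat{\nabla}^g$ in $C^0$, then yields
\[
\cF_s(U) \leq \int |\widehat{\nabla}^g\psi_\infty|_g^2\,d\mu^g \leq \liminf_j \cF_s(U_j).
\]

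The main obstacle is ensuring that the asymptotic boundary condition is preserved under the weak limit: nothing in the definition of $\cF_s$ forbids a minimizing sequence from losing mass at infinity. Ruling this out requires a uniform-in-$j$ weighted $L^\infty$ decay estimate for $\psi_j - \Phi^*\phi_s$ on $\bH^n \setminus B_{R_2}$ for $R_2$ sufficiently large. Such an estimate should follow from elliptic theory in weighted spaces on the exterior, exploiting that $\widehat{\nabla}^{g_j}\Phi^*\phi_s$ is small in weighted $L^2$ at infinity (because $g_j \to b$ there, combined with the explicit asymptotics of $\phi_s$ as an exact Killing spinor for the hyperbolic background).
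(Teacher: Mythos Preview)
Your argument is correct in outline but takes a more elaborate route than the paper's. Both begin the same way: transport the minimizer for one metric to serve as a competitor for the other, yielding $\cF_s(U_2) \leq \cF_s(U_1) + E$ with the error controlled by the $C^1$-distance between $U_1$ and $U_2$. The paper, however, obtains the reverse inequality simply by interchanging the roles of $U_1$ and $U_2$, plugging the minimizer $\psi_2$ for $\cF_s(U_2)$ into the functional at $U_1$. Moreover, since $g_1$ and $g_2 = W^\kappa g_1$ are conformally related, the paper uses the explicit conformal identification of spinor bundles rather than the general Bourguignon--Gauduchon map; the remainder then involves only factors of $(1-W^q)$ and $dW$, which are directly controlled by $\|U_2 - U_1\|_{C^1}$. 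The one subtle point---that the minimizer $\psi_2$ remains under control as $U_2 \to U_1$---is handled by invoking continuous dependence of the minimizer on $U$.

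Your compactness argument for lower semi-continuity is a legitimate alternative, but the obstacle you identify (preservation of the asymptotic boundary condition under weak limits) is real and forces you to prove the uniform weighted decay estimates you sketch; the paper sidesteps this entirely via the symmetry trick. What your approach buys is that it does not presuppose continuous dependence of the minimizer, essentially deriving it along the way. In a setting where that dependence were not already available, your route would be the natural one; here it is unnecessary overhead.
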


\begin{proof}
Let $U_1$, $U_2$ be functions in $H$ and set $g_1 = U_1^{\kappa} b$, 
$g_2 = U_2^{\kappa} b = W^{\kappa} g_1$, where $W \definedas U_2/U_1$.
Let $\psi_1$ and $\psi_2$ be the minimizers for $\cF_s(U_1)$ and
$\cF_s(U_2)$. Using standard methods of identifying spinors for 
conformal metrics (see for example \cite[Section~5.2]{Hijazi}) we 
identify the spinor $\psi_1$ defined for the metric $g_1$ with the 
spinor $\overline{\psi_1}$ for the metric $g_2$. Further, we can 
express the covariant derivative $\nabla^{g_2} \overline{\psi_1}$
as a leading term which is $\nabla^{g_1} \psi_1$ followed by terms
involving $dW$ and $\psi_1$. We then compute
\[ \begin{split}
\cF_s(U_2)
&= \int_{\bH^n \setminus B_{R_1}}
|\widehat{\nabla}^{g_2} \psi_2|_{g_2}^2 
\, d\mu^{g_2} \\
&\leq \int_{\bH^n \setminus B_{R_1}}
|\widehat{\nabla}^{g_2} \overline{\psi_1}|_{g_2}^2 
\, d\mu^{g_2} \\
&= \int_{\bH^n \setminus B_{R_1}}
|\widehat{\nabla}^{g_1} \psi_1 |_{g_1}^2 
\, d\mu^{g_1}
+ E(W,\psi_1) \\
&=
\cF_s(U_1) + E(W,\psi_1).
\end{split} \]
Here the remainder $E(W,\psi_1)$ is given by an integral over 
$\bH^n \setminus B_{R_1}$ where each term in the integrand is quadratic 
in $\psi_1$ (containing $\psi_1$ or $\nabla^{g_1} \psi_1$) and contains 
one or two factors of the type $(1 - W^q)$ or $dW$. Since the minimizing 
spinor $\psi$ for $\cF_s(U)$ depends continuously on $U$ we conclude 
that $E(W,\psi_1)$ can be made arbitrarily small by choosing $U_2$ 
sufficiently close to $U_1$ in $C^1$. By interchanging $U_1$ and $U_2$
we get an inequality in the other direction, and we conclude that 
$\cF_s$ is continuous.
%
\end{proof}

Let $s^{\pm} \in \Sigma$ be such that 
\[
V_{s^{\pm}} = |\phi_{s^{\pm}}|^2 = V_{(0)} \pm V_{(1)}.
\] 
We define the functional $\cF$ by 
\[
\cF(U) \definedas \cF_{s^{+}}(U) + \cF_{s^{-}}(U) .
\]


In the next Lemma we prove that the mass bounds $\cF(U)$.

\begin{lemma} \label{massboundscF}
For $(M,g,\Phi,U) \in \cA^{\spin}$ we have
\[
\cF(U) \leq (n-1)\omega_{n-1} m^g.
\]
\end{lemma}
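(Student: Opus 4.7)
The plan is to apply Witten's spinorial argument to produce privileged test spinors on the whole manifold $M$, and then use the restriction of these test spinors to $\bH^n \setminus B_{R_1}$ as admissible competitors in the variational definition of $\cF_{s^{\pm}}(U)$. The output of Witten's argument, namely the Schr\"odinger--Lichnerowicz identity \eqref{schr-lichn}, controls the full Dirichlet-type energy in terms of the mass functional, so the variational inequality yields $\cF(U) \leq (n-1)\omega_{n-1} m^g$ after we collect the two contributions and exploit the fact that the coordinates $\Phi$ are balanced.

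More concretely, first I would invoke the standard Witten-type existence result for asymptotically hyperbolic spin manifolds: for each of the two Killing spinors $\phi_{s^{\pm}}$ on $\bH^n$, there exists a spinor $\psi_{s^{\pm}}$ on $M$ with $\widehat{D}^g \psi_{s^{\pm}} = 0$ and $\psi_{s^{\pm}} - \Phi^* \phi_{s^{\pm}} \to 0$ at infinity. Applying \eqref{schr-lichn} to each of these spinors and using the assumption $\scal^g + n(n-1) \geq 0$ to discard the curvature term gives
\[
\int_M \left| \widehat{\nabla}^g \psi_{s^{\pm}} \right|_g^2 \, d\mu^g
\leq \frac{1}{4} H_{\Phi}(V_{s^{\pm}}).
\]

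Next I would feed these spinors into the variational problems defining $\cF_{s^{\pm}}(U)$. Since the infimum in the definition of $\cF_{s^{\pm}}(U)$ is taken over all spinors on $\bH^n \setminus B_{R_1}$ with the required behavior at infinity (no extra boundary constraint is imposed beyond what arises as the natural condition on the minimizer), the restriction of $\psi_{s^{\pm}}$ to the exterior region is admissible. Discarding the contribution from the interior region $K \cup \Phi^{-1}(B_{R_1} \setminus \overline{B}_{R_0})$, which is non-negative, yields
\[
\cF_{s^{\pm}}(U)
\leq \int_{\bH^n \setminus B_{R_1}} \left| \widehat{\nabla}^g \psi_{s^{\pm}} \right|_g^2 \, d\mu^g
\leq \frac{1}{4} H_{\Phi}(V_{s^{\pm}}).
\]

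To finish, I would use that $V_{s^{+}} + V_{s^{-}} = 2 V_{(0)}$ and the linearity of $H_\Phi$ to get
\[
\cF(U) = \cF_{s^{+}}(U) + \cF_{s^{-}}(U) \leq \tfrac{1}{4} H_{\Phi}(V_{s^{+}} + V_{s^{-}}) = \tfrac{1}{2} H_{\Phi}(V_{(0)}),
\]
and then invoke the fact that $\Phi$ is balanced, which means $m^g = \tfrac{1}{2(n-1)\omega_{n-1}} H_{\Phi}(V_{(0)})$, to conclude $\cF(U) \leq (n-1)\omega_{n-1} m^g$. The only nontrivial input is the existence step for $\psi_{s^{\pm}}$: one must verify that standard Witten-type analysis goes through for spinors asymptotic to the Killing spinors $\phi_{s^{\pm}}$ corresponding to null rather than timelike elements of $\cN$, but this is well established in the asymptotically hyperbolic spin setting of \cite{ChruscielHerzlich} and is the main technical point underlying the whole argument.
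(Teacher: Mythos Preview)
Your proposal is correct and follows essentially the same route as the paper: use the Witten spinor $\psi_{s^{\pm}}$ appearing in \eqref{schr-lichn} as a competitor for $\cF_{s^{\pm}}(U)$, drop the non-negative scalar curvature term and shrink the domain to get $\cF_{s^{\pm}}(U) \leq \tfrac{1}{4}H_{\Phi}(V_{s^{\pm}})$, then sum using $V_{s^+}+V_{s^-}=2V_{(0)}$ and invoke the balanced condition. Your explicit remark about the existence step for spinors asymptotic to null Killing spinors is a useful clarification, but the paper simply takes this as given from \cite{ChruscielHerzlich}.
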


\begin{proof}
Since the integral in the definition of $\cF_s(U)$ lacks the non-negative 
term involving scalar curvature and is taken over a smaller domain it is 
never larger than the integral in \eqref{schr-lichn}. Further, the infimum 
in the definition of $\cF_s(U)$ can only decrease the value of the integral 
in \eqref{schr-lichn} and we conclude that 
\[
\cF_{s^{\pm}}(U) \leq
\frac{1}{4} H_{\Phi} (V_{s^{\pm}}).
\]
Therefore
\[ \begin{split}
\cF(U) &= \cF_{s^{+}}(U) + \cF_{s^{-}}(U) \\
&\leq
\frac{1}{4}\left( H_{\Phi} (V_{s^+}) + H_{\Phi} (V_{s^-}) \right) \\
&=
\frac{1}{4}\left( H_{\Phi} (V_{(0)} + V_{(1)}) 
+ H_{\Phi} (V_{(0)} - V_{(1)}) \right) \\
&=
\frac{1}{2} H_{\Phi} (V_{(0)}) \\
&= 
(n-1)\omega_{n-1} m^g
\end{split} \]
follows from \eqref{confmassbalanced}.
\end{proof}


\begin{lemma} \label{cF=0}
$\cF(U) = 0$ if and only if $U \equiv 1$.
\end{lemma}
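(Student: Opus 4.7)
The forward direction is immediate: when $U \equiv 1$ one has $g = b$, so $\Phi^*\phi_{s^\pm}$ is a genuine Killing spinor for the hyperbolic metric and realizes $\cF_{s^\pm}(U) = 0$, whence $\cF(U) = 0$.

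For the converse, assume $\cF(U) = 0$. Since $\cF_{s^\pm}(U) \geq 0$ by construction, each term vanishes, so the associated minimizers $\psi^\pm$ satisfy $\widehat{\nabla}^g \psi^\pm \equiv 0$ on $\bH^n \setminus B_{R_1}$ with $\psi^\pm - \Phi^*\phi_{s^\pm} \to 0$ at infinity. A direct Leibniz computation using the Killing equation $\nabla^g_X \psi^\pm = -\tfrac{i}{2} X \cdot \psi^\pm$ and the skew-Hermiticity of Clifford multiplication shows that $V^\pm \definedas |\psi^\pm|^2$ satisfies $\hess^g V^\pm = V^\pm g$ and tends to $V_{s^\pm} = V_{(0)} \pm V_{(1)}$ at infinity, which is strictly positive on $\bH^n$. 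A maximum-principle argument then forces $V^\pm > 0$ everywhere: at an interior zero of $V^+$ the equation gives $\nabla V^+ = 0$ and $\hess V^+ = V^+ g = 0$, so uniqueness for the ODE $V'' = V$ along unit-speed geodesics (equivalently, analyticity) would force $V^+ \equiv 0$, contradicting the positive asymptotic. In particular $\psi^\pm$ is nowhere zero on $\bH^n \setminus B_{R_1}$.

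The integrability condition $\widehat{R}^g(X,Y)\psi^+ = 0$ for the parallel section, obtained by expanding $\widehat{\nabla}^g = \nabla^g + \tfrac{i}{2} e \cdot$ in the commutator, yields the algebraic identity $R^g(X,Y)\psi^+ = \tfrac{1}{4}(X\cdot Y - Y\cdot X)\cdot \psi^+$. Applying the Clifford contraction $\sum_j e_j \cdot R^g(e_j, X) = \tfrac{1}{2}\ric^g(X)\cdot$ gives $\ric^g(X)\cdot \psi^+ = -(n-1)\, X\cdot \psi^+$; since Clifford multiplication by a nonzero vector is injective on spinors and $\psi^+$ is nowhere zero, this forces $\ric^g = -(n-1)g$. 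Because $g = U^\kappa b$ inherits the vanishing Weyl tensor from the constant-curvature metric $b$, the combination Einstein plus vanishing Weyl in dimension $n \geq 3$ forces $g$ to have constant sectional curvature $-1$ on $\Omega \definedas \bH^n \setminus B_{R_1}$.

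The main obstacle is the final step: concluding $U \equiv 1$ from the fact that $g = U^\kappa b$ and $b$ are both hyperbolic metrics on $\Omega$ and $U \to 1$ at infinity. Since $\Omega$ is simply connected for $n \geq 3$ and $(\Omega, g)$ has constant sectional curvature $-1$, there is a global isometric immersion $T\colon (\Omega, g) \hookrightarrow (\bH^n, b)$, i.e., $T^*b = g$. Viewed as a self-map of open subsets of $(\bH^n, b)$, the map $T$ is a conformal diffeomorphism for $b$, so Liouville's theorem (valid for $n \geq 3$) identifies $T$ with the restriction of a M\"obius transformation of the conformal compactification of $\bH^n$. The asymptotic normalization $U \to 1$ combined with $T^*b = U^\kappa b$ forces $T$ to map the sphere at infinity $\partial_\infty \bH^n$ into itself, so $T$ preserves $\bH^n$ and is therefore a hyperbolic isometry. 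Consequently $T^*b = b$, which together with $T^*b = g$ yields $g = b$, i.e., $U \equiv 1$.
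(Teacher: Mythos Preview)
Your proof is correct and follows the same skeleton as the paper: vanishing of $\cF$ produces nonzero Killing spinors, the integrability condition forces $g$ to be Einstein, conformal flatness then gives constant sectional curvature $-1$, and finally $U\to 1$ forces $U\equiv 1$. The paper's proof is extremely terse---it simply asserts each of these implications without detail---so your version is essentially a fleshed-out rendition of the same argument.

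The one place you go beyond the paper is the final step, where the paper writes only ``Since $U\to 1$ at infinity we conclude that $U\equiv 1$.'' Your developing-map/Liouville argument is correct, but heavier than necessary. A more direct route, likely what the authors had in mind: once $g=U^\kappa b$ and $b$ are both of constant curvature $-1$, the conformal change formula for the Schouten tensor shows that $w\definedas U^{-2/(n-2)}$ satisfies $\hess^b w = c\,b$ for some scalar $c$, i.e.\ $w$ is concircular. On $\bH^n$ such functions span the $(n{+}2)$-dimensional space $\bR\oplus\cN$, and the only element of that space tending to $1$ at infinity is the constant $1$; hence $U\equiv 1$. This avoids the appeal to Liouville's theorem and the developing map (and the attendant check that $\Omega$ is simply connected, which fails for $n=2$).
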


\begin{proof}
If $\cF(U) = 0$ then $\cF_{s^{+}}(U)  = 0$ and $\cF_{s^{-}}(U) = 0$ and 
both the infima are attained by non-trivial Killing spinors. The 
existence of a non-trivial Killing spinor implies that $g$ is an 
Einstein metric with scalar curvature $-n(n-1)$. Since the metric $g$ 
is also conformally flat it must have constant negative curvature $-1$. 
Since $U \to 1$ at infinity we conclude that $U \equiv 1$. If 
$U \equiv 1$ then $g$ is the hyperbolic metric which has Killing spinors, 
and thus $\cF(U) = 0$.
\end{proof}

\begin{proof}[Proof of Theorem \ref{thmNearPMT2}]
As in the proof of Theorem \ref{thmNearPMT1} we argue by contradiction.
We assume that there is a sequence $(M_k, g_k, \Phi_k, U_k)$ of elements 
of $\cA^{\spin}$ such that $m^{g_k}$ tends to zero while 
$\left| U_k - 1 \right| \geq \epsilon$. Arguing as in the proof of 
Theorem \ref{thmNearPMT1}, a subsequence of $U_k - 1$ will converge to 
a limit $U_{\infty} - 1$ in $X^{2, p}_\delta (\bH^n \setminus B_{R_1})$ for 
which $\sup_{\bH^n \setminus B_{R_1}} \left| U_{\infty} - 1 \right| 
\geq \epsilon $. From Lemma \ref{Fscontinuous} we see that 
$\lim_{k \to \infty} \cF(U_k) = \cF(U_{\infty})$, and from Lemma 
\ref{massboundscF} we have $\lim_{k \to \infty} \cF(U_k) = 0$. 
Lemma \ref{cF=0} then tells us that $U_{\infty} \equiv 1$ which is a 
contradiction. From this we conclude that for every $\epsilon > 0$ 
there is a $\delta > 0$ such that for $(M, g, \Phi, U)$ belonging to 
$\cA^{\spin}$ with $m^g \leq \delta$ it holds that
$\sup_{\bH^n \setminus B_{R_1}} \left|U-1\right| \leq \epsilon$.
The theorem now follows from Proposition \ref{propUControlledDecay}.
\end{proof}

\appendix
\section{The anti-de~Sitter-Schwarzschild spacetime} 
\label{secAdS}


In this appendix, we discuss the anti-de~Sitter-Schwarzschild metrics in 
dimension $n$ following \cite[Section 2]{ShiTam} where only the case
$n=3$ is treated. These metrics are also called Kottler metrics with
negative cosmological constant. Furthermore we explicit the lapse
function, see \cite{StuchlikHledik} for the $3+1$-dimensional case.

\subsection{The metric in areal coordinate}

Let $g$ be a Riemannian metric on the $n$-dimensional manifold $M$ and
let 
\[
\gamma \definedas -V^2 dt^2 + g
\]
be a Lorentzian metric defined on the manifold $\cM \definedas \bR \times M$. 
If we assume that the function $V$ does not depend on $t$ then $\gamma$ 
solves the Einstein equations with cosmological constant $\Lambda$,
\[
\ric^{\gamma} - \frac{\scal^{\gamma}}{2} \gamma + \Lambda \gamma = 0,
\]
if and only if
\begin{subequations}
\begin{align}
\scal^g &= 2 \Lambda, \label{eqStatic1}\\
\ric^g - \frac{\hess^g V}{V} + \frac{\Delta^g V}{V} g &= 0.
\label{eqStatic2}
\end{align}
\end{subequations}

Such a metric $g$ is called static. See also
\cite[Equations (0.1)-(0.2)]{ChruscielHerzlich}. In what follows we will
always assume that when the metric is not indicated, the curvature tensors
and the connection are defined with respect to the metric $g$. We are
interested in the case of negative cosmological constant. We assume that 
\[
\Lambda = -\frac{n(n-1)}{2},
\]
which can always be achieved by a rescaling. From Equation 
\eqref{eqStatic1}, this imposes $\scal^g = -n(n-1)$. Taking the trace of 
Equation \eqref{eqStatic2} yields $\Delta V = n V$ so we can
rewrite the system as
\begin{subequations}
\begin{align}
\scal^{g} & = -n(n-1), \label{eqStatic3}\\
\ric^g_{ij} - \frac{\hess^g_{ij} V}{V} + n g_{ij} & = 0.
\label{eqStatic4}
\end{align}
\end{subequations}

Note that the slice $t=0$ is totally geodesic. In particular, marginally
(outer) trapped surfaces correspond to minimal surfaces for the metric 
$g$. We now assume that the metric $g$ is rotationally symmetric. 
Such a metric can be written in full generality as 
$g = ds^2 + k(s)^2 \sigma$ where $\sigma$ is the round metric on $S^{n-1}$. 
The mean curvature of a surface of constant $s$ is given by 
$H(s) = (n-1)\frac{\partial_s k}{k}$. So, in a region where no surface
of constant $s$ is a minimal surface, $k$ has non-vanishing derivative 
and we can use it as a radial (areal) coordinate $\rho$ so that
\[
g = f(\rho)^2 d\rho^2 + \rho^2 \sigma.
\]
In what follows we assume that the coordinate index $1$ corresponds to 
the $\rho$-coordinate while upper-case latin letters represent coordinates 
on the sphere and run from $2$ to $n$. The Christoffel symbols of the 
metric $g$ are given by
\[ \left\lbrace
\begin{aligned}
\Gamma^{1}_{11} & = \frac{f'}{f},\\
\Gamma^{1}_{1A} & = 0,\\
\Gamma^{A}_{11} & = 0,\\
\Gamma^{A}_{1B} & = \frac{1}{\rho} \delta^A_B,\\
\Gamma^{1}_{AB} & = -\frac{\rho}{f^2} \sigma_{AB},\\
\Gamma^{C}_{AB} & = \gamma^C_{AB},
\end{aligned}
\right.\]
where $\gamma^C_{AB}$ are the Christoffel symbols of the metric $\sigma$. 
The components of the curvature tensors of the metric $g$ can then 
be computed,
\[\left\lbrace
\begin{aligned}
\riemuddd{L}{K}{I}{J} & = \left(1 - \frac{1}{f^2}\right)
  \left(\delta^L_I \sigma_{JK} - \delta^L_J \sigma_{IK}\right),\\
\riemuddd{1}{K}{I}{J} & = 0,\\
\riemuddd{1}{K}{1}{J} & = \frac{\rho f'}{f^3} \sigma_{JK},\\
\ricdd{K}{J} & = \left[(n-2) \left(1 - \frac{1}{f^2}\right)
  + \frac{\rho f'}{f^3}\right] \sigma_{JK},\\
\ricdd{1}{J} & = 0,\\
\ricdd{1}{1} & = (n-1) \frac{\rho f'}{f},\\
\scal & = 2(n-1) \frac{f'}{\rho f^3}
  + \frac{(n-1)(n-2)}{\rho^2} \left(1 - \frac{1}{f^2}\right).
\end{aligned}
\right.\]

What is interesting about these formulas is that the curvature 
tensor depends only on the first derivative of $f$. In particular, 
Equation \eqref{eqStatic4} becomes
\begin{equation}\label{eqConstScalCurv}
-n = 2 \frac{f'}{\rho f^3} 
+ \frac{n-2}{\rho^2} \left(1 - \frac{1}{f^2}\right). 
\end{equation}
Defining $u$ by the relation $f(\rho) = u(\rho)^{-\frac{1}{2}}$ we get
\[
\frac{u'}{\rho} + (n-2) \frac{u-1}{\rho^2} = n.
\]
The general solution of this equation is given by 
\[
u(\rho) = 1 + \rho^2 - \frac{2m}{\rho^{n-2}}
\] 
where $m$ is a free parameter that can be identified with the mass. 
Our next goal is to find the lapse function $V = V(\rho)$. Equation 
\eqref{eqStatic4} can be decomposed into radial, tangential, and mixed 
components. The mixed components vanish while the other two 
lead to the following equations.
\begin{equation}\label{eqLapse}
\left\lbrace
\begin{aligned}
0 &= (n-1) V  \frac{\partial_\rho f}{\rho f} - \partial_\rho^2 V
  + \frac{\partial_\rho f}{f} \partial_\rho V + n f^2 V,\\
0 &= V \left[(n-2)\left(1- \frac{1}{f^2}\right)
  + \frac{\rho \partial_\rho f}{f^3}\right]
  - \frac{\rho}{f^2} \partial_\rho V + n \rho^2 V.
\end{aligned}
\right.
\end{equation}
The second equation can be combined with Equation \eqref{eqConstScalCurv} 
to yield
\[
\frac{V'}{V} = - \frac{f'}{f}.
\] 
Hence $V = \frac{\lambda}{f}$. Up to a redefinition of $t$ we can assume 
that $V = \frac{1}{f}$. It is then checked by a simple calculation that the 
first line of Equation \eqref{eqLapse} is also fulfilled. Hence the 
anti-de~Sitter-Schwarzschild metric can be written as follows, 
\[
\gamma_{\adss} = 
- \left(1+\rho^2 - \frac{2m}{\rho^{n-2}}\right) dt^2
+ \frac{d\rho^2}{1+\rho^2 - \frac{2m}{\rho^{n-2}}} + \rho^2 \sigma.
\]

We now study separately the cases $m > 0$ and $m < 0$.

\subsection{The case of positive mass}

The metric 
\[g_{\adss} =
\frac{d\rho^2}{1+\rho^2 - \frac{2m}{\rho^{n-2}}} + \rho^2 \sigma\]
is only defined on the set $\{\rho \geq a(m)\}$ where 
$a(m)$ is the unique solution of the equation
\[
1 + \rho^2 - \frac{2m}{\rho^{n-2}} = 0.
\]

We define
\[ 
h_m(\rho) 
= \int_\rho^\infty \frac{ds}{s \sqrt{1+s^2-\frac{2m}{s^{n-2}}}}
\]
and the functions $r$ and $\phi$ by
\[
e^r = \frac{1+e^{-h_m(\rho)}}{1-e^{-h_m(\rho)}},
\qquad 
\phi^{\frac{2}{n-2}} = \frac{\rho}{\sinh r}.
\]
We note that $r \to r_0(m) > 0$ when $\rho \to a(m)^+$. The function
$r : (a(m), \infty) \to (r_0(m), \infty)$ is a smooth increasing function 
of $\rho$. We remark that
\begin{equation}\label{eqRelationRhoR}
\left\lbrace
\begin{aligned}
\rho 
&= 
\phi^{\frac{2}{n-2}} \sinh r , \\
\frac{d\rho}{\rho\sqrt{1+\rho^2 - \frac{2m}{\rho^{n-2}}}} 
&= 
\frac{dr}{\sinh r}.
\end{aligned}
\right.
\end{equation}
The metric $g_{\adss} $ can then be written as
\[
g_{\adss}  = \phi^{\frac{4}{n-2}} \left( dr^2 + \sinh^2 r \sigma \right).
\]

The mean curvature of the hypersurfaces of constant $\rho$ is given by
\begin{equation}\label{eqMeanCurvConstRho}
H = \phi^{-\frac{2}{n-2}} \left[(n-1) \coth r 
+ \frac{2(n-1)}{n-2} \frac{\partial_r \phi}{\phi}\right].
\end{equation}
A simple calculation shows that $\phi$ and $\partial_r \phi$ are 
continuous at $r = r_0(m)$ and that the hypersurface $r = r_0(m)$ is a 
minimal surface.

We next show that the manifold can be doubled to a complete 
asymptotically hyperbolic manifold of constant scalar curvature. 
For this we first switch to the conformal ball model of hyperbolic 
space and set
\[
\tau = \frac{e^r - 1}{e^r + 1}.
\] 
The metric $g_{\adss} $ becomes
\[
g_{\adss}  = \frac{4 \phi^{\frac{4}{n-2}}}{(1-\tau^2)^2} 
\left(d\tau^2 + \tau^2 \sigma \right)
\]
and is defined on the annulus $b(m) \leq |x| < 1$ in $\bR^n$ where 
$\tau=|x|$ and $b(m)$ is given by
\[
b(m) = \frac{e^{r_0(m)} - 1}{e^{r_0(m)} + 1}.
\] 
As is well known, the inversion on $\bR^n \setminus \{0\}$ given by
\[
i : x \mapsto \frac{b(m)^2}{|x|^2} x
\]
is a conformal transformation. Pulling back the metric $g$ to the
annulus $b^2(m) < |x| \leq b(m)$ by $i$, we get the following
extension of the metric $g_{\adss} $,
\[
g _{\adss} = 
\frac{4 f_m^{\frac{4}{n-2}}}{(1-\tau^2)^2} 
\left( d\tau^2 + \tau^2 \sigma \right),
\]
where
\[
f_m^{\frac{2}{n-2}}(\tau) =
\begin{dcases}
\phi^{\frac{2}{n-2}}(\tau) 
&\text{if $b(m) \leq \tau \leq 1$,} \\
b(m)^2 \phi^{\frac{2}{n-2}}\left(\frac{b(m)^2}{\tau}\right)
\frac{1-\tau^2}{\tau^2 - b(m)^4}
&\text{if $b(m)^2 \leq \tau \leq b(m)$.}
\end{dcases}
\]

In the following propositions we collect the basic properties of the 
metric $g_{\adss}$. Most of them are useful in the course of the 
proof of our main results. See also \cite[Section 2]{ShiTam} for 
the three-dimensional case.

\begin{proposition} \label{propAdSSchPos1}
For each $m > 0$ the anti-de~Sitter-Schwarzschild metric is 
asymptotically hyperbolic and is defined on $\bH^n \setminus B_{b(m)^2}$. 
Moreover,
\begin{enumerate}
\item 
$f_m > 1$, $\lim_{\tau \to 1} f_m(\tau) = 1$,
$\lim_{\tau \to b(m)^2} f_m(\tau) = \infty$. 
\item 
There exists a constant $C>0$ independent of $m$ such that 
$f_m \leq 1+Cme^{-nr}$ provided that $r\geq r_1(m)$, where $r_1$ is a 
non-decreasing continuous function of $m$ such that $r_1(m)>r_0(m)$.
Consequently, $f_m = 1 + O(e^{-n r})$ when
$r \to \infty$. 
\item
$g_{\adss}$ has constant scalar curvature $-n(n-1)$ and mass $m$.
\item 
$\partial_r f_m < 0$.
\item 
The hypersurface $r = r_0(m)$ is the only compact minimal surface.
\end{enumerate}
\end{proposition}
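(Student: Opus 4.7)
My plan is to verify the five assertions in order, all of them flowing from the change of variables $\rho = \phi^{2/(n-2)} \sinh r$ and the integral relation $\int_r^\infty d\tau/\sinh\tau = \int_\rho^\infty ds/(s\sqrt{1+s^2 - 2m/s^{n-2}})$ already established. For part (1), I would first read off $\lim_{\tau\to 1} f_m = 1$ from the fact that both integrals tend to $0$ and comparison with the hyperbolic case ($m=0$, for which $\rho = \sinh r$) shows $\rho/\sinh r \to 1$; the same comparison, using $(s\sqrt{1+s^2 - 2m/s^{n-2}})^{-1} > (s\sqrt{1+s^2})^{-1}$ for $m > 0$, yields $\rho > \sinh r$ throughout, hence $f_m > 1$. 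The divergence at $\tau \to b(m)^2$ is read off from the second case of the definition, where $\phi^{2/(n-2)}(b(m)^2/\tau)$ remains bounded while the prefactor $(1-\tau^2)/(\tau^2 - b(m)^4)$ blows up.

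Parts (2), (3), (4) all come from one calculation. Differentiating $\phi^{2/(n-2)} \sinh r = \rho$ in $r$ and using $d\rho/dr = \rho \sqrt{1+\rho^2 - 2m/\rho^{n-2}}/\sinh r$ from \eqref{eqRelationRhoR} gives the clean identity
\[
\frac{2}{n-2}\frac{\partial_r \phi}{\phi}\tanh r = \frac{\sqrt{1+\rho^2 - 2m/\rho^{n-2}}}{\cosh r} - 1.
\]
Since $\rho > \sinh r$ by part (1), the right-hand side is strictly negative for $m > 0$, proving (4). Taylor expanding the square root and using $\sinh r \sim \rho$ at infinity, the right-hand side is $-(n-2)m\rho^{-n} + o(\rho^{-n})$; integrating from $r$ to $\infty$ with $\phi(\infty) = 1$ gives $f_m - 1 = Cme^{-nr} + O(e^{-(n+1)r})$, which for $r \geq r_1(m)$ (chosen so that $\rho \geq (2m)^{1/n}$, ensuring $1+\rho^2 - 2m/\rho^{n-2} \geq 1 + \rho^2/2$) produces the bound of (2) with a constant $C$ independent of $m$. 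For (3) the scalar curvature identity is the content of \eqref{eqConstScalCurv}, and plugging the expansion into formula \eqref{confmassbalanced} and evaluating the resulting angular integral reproduces the parameter $m$ as the mass.

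For part (5), the mean curvature (outward) of the constant-$\rho$ sphere with respect to $g_{\adss}$ is, from the warped-product form, $H(\rho) = (n-1)\sqrt{1+\rho^2 - 2m/\rho^{n-2}}/\rho$, which vanishes exactly at $\rho = a(m)$ (that is, $r = r_0(m)$) and is strictly positive elsewhere. Thus $\{\rho = \text{const}\}$ is a foliation by mean-convex spheres away from $r = r_0(m)$. If a compact minimal surface $\Sigma \ne \{r = r_0(m)\}$ existed, after applying the reflection symmetry through $\{r = r_0(m)\}$ if necessary, the function $\rho|_\Sigma$ would attain its maximum at an interior point where $\Sigma$ touches a $\rho$-sphere tangentially from inside; the geometric maximum principle would then force the mean curvatures to agree, contradicting strict positivity for $\rho > a(m)$.

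The main obstacle is part (2), specifically extracting the $m$-uniform constant $C$ and the monotone-in-$m$ radius $r_1(m)$: one must control both the implicit dependence of $r$ on $\rho$ (through the integral defining $h_m$) and the expansion of the right-hand side of the displayed identity, making sure that all error terms are dominated uniformly once $\rho$ is past the threshold $(2m)^{1/n}$. The remaining parts are essentially bookkeeping once the identity above is in hand.
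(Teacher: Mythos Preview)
Your displayed identity
\[
\frac{2}{n-2}\frac{\partial_r \phi}{\phi}\tanh r \;=\; \frac{\sqrt{1+\rho^2 - 2m/\rho^{n-2}}}{\cosh r} - 1
\]
is correct and is a nice way to organize parts (2) and (4). However, the step ``Since $\rho > \sinh r$ by part (1), the right-hand side is strictly negative'' is wrong as written. From $\rho > \sinh r$ you get $1+\rho^2 > \cosh^2 r$, which pushes the right-hand side in the \emph{positive} direction; the sign you want comes entirely from the competition with the $-2m/\rho^{n-2}$ term, and that competition is exactly as delicate as the statement you are trying to prove. The inequality $1+\rho^2 - 2m/\rho^{n-2} < \cosh^2 r$ is true, but it does not follow from $\rho > \sinh r$ alone. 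One way to rescue your route is to substitute $\xi = \sqrt{1+t^2-2m/t^{n-2}}$ in the defining integral for $h_m$ and $\xi = \sqrt{1+t^2}$ in the corresponding hyperbolic integral; comparing the resulting integrands over the common $\xi$-variable then forces $\sqrt{u(\rho)} < \cosh r$. The paper avoids this entirely: it proves (4) by observing that $f_m$ solves the Yamabe equation, whence $\partial_r\bigl(\sinh^{n-1}r\,\partial_r f_m\bigr) = \tfrac{n(n-2)}{4}\sinh^{n-1}r\,(f_m^{(n+2)/(n-2)} - f_m) > 0$, so a nonnegative value of $\partial_r f_m$ anywhere would force $f_m$ to increase indefinitely, contradicting $f_m\to 1$.

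This gap propagates to your argument for (2). Your Taylor expansion of the right-hand side hides a term proportional to $\phi^{2/(n-2)}-1$ (coming from $\rho = \phi^{2/(n-2)}\sinh r$ inside the square root), so the right-hand side is not simply $-(n-2)m\rho^{-n} + o(\rho^{-n})$ but rather of the form $(\phi^{2/(n-2)}-1) - c\,m\rho^{-n} + \text{lower order}$; integrating this from $r$ to $\infty$ is then circular unless you already control $\phi-1$. With (4) in hand the circularity disappears (since $\partial_r\phi<0$ forces $\phi^{2/(n-2)}-1 < c\,m\rho^{-n}$ immediately from the identity), but as written your (2) depends on the unjustified (4). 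The paper instead bounds $h_m(\rho)$ directly by $\arcsinh(\rho^{-1})/\sqrt{1-m\rho^{-n}}$ for $\rho\ge(2m)^{1/n}$ and applies the mean value theorem, which gives the $m$-uniform constant and the threshold $r_1(m)=\max\{R_1, r((2m)^{1/n})\}$ without reference to (4).

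Parts (1), (3), (5) in your outline are essentially the paper's arguments and are fine.
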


\begin{proof} 
The mass of $g_{\adss}$ is easily computed using 
\cite[Formula (2.25)]{ChruscielHerzlich}, and Property 3 follows. 

Fixing $r \geq r_0(m)$, we remark that for all $s \in (a(m), \infty)$,
\[
h_m(s) > h_0(s) = \frac{1}{2} \ln \frac{\sqrt{1+s^2}+1}{\sqrt{1+s^2}-1}.
\]
In particular, if $\sinh r \geq a(m)$ we get
\[
h_m(\sinh r) > \ln \left(\coth \frac{r}{2}\right).
\]
Since $h_m(\rho)=\ln \coth \frac{r}{2}$, and since $h_m$ is decreasing, 
we have $\rho > \sinh r$. It is also obvious that $\sinh r < \rho$ if 
$\sinh r < a(m)$. This proves that $\phi = 
\left(\frac{\rho}{\sinh r} \right)^{\frac{n-2}{2}}> 1$ 
for any $r \geq r_0(m)$. 

We next find an upper bound for $f_m$. First, it is clear that 
$a(m) < (2m)^{1/n}$. If we assume that  $\rho \geq (2 m)^{1/n}$ then 
$\rho > a(m)$ and
\[\begin{split}
h_m(\rho) 
&= 
\int_\rho^\infty \frac{ds}{s\sqrt{1+s^2}\sqrt{1-\frac{2m}{(1+s^2)s^{n-2}}}}\\
&\leq 
\frac{1}{\sqrt{1-m \rho^{-n}}} \int_\rho^\infty 
\frac{ds}{s\sqrt{1+s^2}}\\
&= 
\frac {\arcsinh (\rho^{-1})}{\sqrt{1-m \rho^{-n}}}.
\end{split}\]
Observe also that $\sinh r=\frac{1}{\sinh h_m(\rho)}$, hence
$\frac{\rho}{\sinh r}=\rho \sinh h_m(\rho)$. Set 
\[
\eta(t) \definedas 
\sinh \left(\frac{\arcsinh (\rho^{-1})}{\sqrt{1-t}}\right).
\]
Let $R_1>0$ be fixed, and assume that 
$\rho\geq \max\{r^{-1}(R_1), (2 m)^{1/n}\}$. Using the mean value theorem 
and the inequality $\arcsinh (\rho^{-1})\leq \rho^{-1}$ we have
\[\begin{split}
\sinh h_m(\rho)
&\leq
\eta(m\rho^{-n}) \\
&\leq 
\eta(0) + m \rho^{-n} \sup_{0\leq \theta \leq 1} \eta'(\theta m \rho^{-n}) \\
&=
\rho^{-1} + m \rho^{-n} \sup_{0\leq \theta \leq 1}
\left(
\frac{\arcsinh (\rho^{-1})}{2(1-\theta m \rho^{-n})^{3/2}}
\cosh \left(\frac{\arcsinh (\rho^{-1})}{\sqrt{1-\theta m \rho^{-n}}}\right)
\right)\\
&\leq \rho^{-1}\left(1+C m \rho^{-n}\right),
\end{split}\]
for some constant $C>0$ which does not depend on $m$. Since 
$\rho \geq \sinh r$ it is now easy to check that
$f_m=(\rho \sinh h_m(\rho))^{\frac{n-2}{2}}\leq 1+C m e^{-nr}$ 
(possibly for a larger constant $C>0$), provided that
$r \geq r_1(m) \definedas \max\{R_1, r((2 m)^{1/n})\}$. By definition, it 
is clear that $r_1(m)>r_0(m)$. The second statement is thereby proved. 

Next, $f_m$ solves the Yamabe equation
\[
-\frac{4(n-1)}{n-2} \Delta^b f_m 
+ n(n-1) \left(f_m^{\frac{n+2}{n-2}}-f_m\right)=0.
\]
In polar normal coordinates we have $\sqrt{\det(b)} = \sinh^{n-1} r$. 
Hence, from the well known formula 
\[
\Delta^b f_m 
= \frac{1}{\sqrt{\det(b)}} \partial_i 
\left(\sqrt{\det(b)} b^{ij} \partial_j f_m\right)
\]
we infer that
\[
\partial_r \left(\sinh^{n-1} r  \partial_r f_m\right) 
= \frac{n(n-2)}{4} \sinh^{n-1} r \left(f_m^{\frac{n+2}{n-2}}-f_m\right).
\]
Assume that $\partial_r f_m(\bar{r}) \geq 0$ for some $\bar{r}$. Then, 
$\partial_r f_m(r) > 0$ for all $r > \bar{r}$ since $f_m > 1$.
This contradicts the fact that $f_m \to 1$ when $r \to \infty$, 
$f_m > 1$. Hence, $\partial_r f_m < 0$ for all $r$.

We finally prove that the hypersurface $r = r_0(m)$ is the only minimal
surface. From Formula \eqref{eqMeanCurvConstRho}, the sphere of constant 
$\rho$ has mean curvature
\[
H(\rho) = (n-1) \sqrt{1+\frac{1}{\rho^2}-\frac{2m}{\rho^n}}.
\]
For any $\rho > a(m)$ we have $H(\rho) > 0$. Thus by the maximum principle 
for minimal surfaces, if $\Sigma$ is a minimal surface, then 
$\sup_{\Sigma} \rho \leq a(m)$, that is $\sup_{\Sigma} \tau \leq b(m)$. By 
symmetry, we also have that $\inf_{\Sigma} \tau \leq b(m)$. This proves 
that $\Sigma$ coincides with the sphere $r = r_0(m)$.
\end{proof}

\begin{proposition} \label{propAdSSchPos2}
$a(m)$, $r_0(m)$ and $b(m)$ are continuous increasing functions of $m$. 
Further,
\begin{enumerate}
\item 
$a(m), r_0(m), b(m) \to 0$ as $m \to 0$,
\item 
$a(m), r_0(m) \to \infty$ and $b(m) \to 1$ as $m \to \infty$.
\end{enumerate}
\end{proposition}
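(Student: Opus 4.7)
The plan is to reduce everything to the study of two explicit functions of $m$: the algebraic function $a(m)$ and the integral $h_m(a(m))$. All three quantities can then be handled by monotonicity and continuity arguments.

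First I would treat $a(m)$ directly. By definition $a(m)$ satisfies $a^{n-2}(1+a^2) = 2m$. The function $\alpha \mapsto \alpha^{n-2}(1+\alpha^2) = \alpha^{n-2} + \alpha^n$ is smooth and strictly increasing from $0$ to $\infty$ on $(0, \infty)$, so the implicit function theorem gives that $a(m)$ is a continuous, strictly increasing function of $m$ with $a(m) \to 0$ as $m \to 0^+$ and $a(m) \to \infty$ as $m \to \infty$.

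Next, observe that the formulas $r_0(m) = \lim_{\rho \to a(m)^+} \ln\frac{1+e^{-h_m(\rho)}}{1-e^{-h_m(\rho)}}$ and $b(m) = \frac{e^{r_0(m)}-1}{e^{r_0(m)}+1} = e^{-h_m(a(m))}$ reduce the study of $r_0$ and $b$ to that of $I(m) \definedas h_m(a(m))$. A direct check (the denominator of the integrand vanishes only linearly at $s = a(m)$) shows that $I(m)$ is finite, and the map $h \mapsto \ln\frac{1+e^{-h}}{1-e^{-h}}$ is strictly decreasing from $(0,\infty)$ onto $(0,\infty)$, so if I prove that $I$ is continuous and strictly \emph{decreasing} with $\lim_{m \to 0^+} I(m) = \infty$ and $\lim_{m \to \infty} I(m) = 0$, then the claimed monotonicity and limits for $r_0$ and $b$ follow immediately.

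To analyze $I(m)$ I would parameterize by $a$ (which is legitimate by Step 1) and substitute $s = a t$ in the integral defining $h_m(\rho)$, together with the identity $2m = a^{n-2}(1+a^2)$, to obtain
\[
I(m(a)) = \int_1^\infty \frac{dt}{t\sqrt{1 + a^2 t^2 - (1+a^2)/t^{n-2}}}.
\]
Monotonicity follows by differentiating the expression under the square root with respect to $a^2$, giving $t^2 - 1/t^{n-2} \geq 0$ for $t \geq 1$, so the integrand strictly decreases in $a$. For the limit $a \to 0^+$, monotone convergence yields $I \to \int_1^\infty \frac{dt}{t\sqrt{1-1/t^{n-2}}} = \infty$ (the integrand behaves like $1/t$ at infinity). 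For the limit $a \to \infty$, one notes that for $t \geq 1$ one has $1 + a^2 t^2 - (1+a^2)/t^{n-2} \geq a^2(t^2-1)$, hence
\[
I(m(a)) \leq \frac{1}{a}\int_1^\infty \frac{dt}{t\sqrt{t^2-1}} = \frac{\pi}{2a} \longrightarrow 0.
\]
Continuity of $I$ in $a$ on any compact subinterval of $(0,\infty)$ follows from dominated convergence, using the same bound $\frac{1}{t a_1 \sqrt{t^2-1}}$ as integrable majorant.

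The main obstacle is the interchange of limits/integrals at the endpoints and, more subtly, the verification that the substitution genuinely produces an integrand that is jointly continuous in $(t, a)$ with a convenient monotonicity in $a^2$; once the algebraic rewriting $1 + a^2 t^2 - (1+a^2)/t^{n-2} = \frac{a^2(t^n-1) + (t^{n-2}-1)}{t^{n-2}}$ is in hand, this is routine. Everything else is bookkeeping.
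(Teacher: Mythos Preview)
Your argument is correct. The substitution $s = a t$ together with the identity $2m = a^{n-2}(1+a^2)$ indeed gives
\[
I(m(a)) = \int_1^\infty \frac{dt}{t\sqrt{1 + a^2 t^2 - (1+a^2)/t^{n-2}}}
= \int_1^\infty \frac{t^{\frac{n-4}{2}}\,dt}{\sqrt{a^2(t^n-1)+(t^{n-2}-1)}},
\]
from which monotonicity in $a$, continuity, and both limits drop out cleanly by elementary convergence theorems.

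The paper proceeds differently and less systematically. For $a(m)$ it pins down the asymptotics by exhibiting explicit sub- and supersolutions $\rho_\pm(m)$ of the defining equation. For $r_0(m)$ it never reparameterizes the integral; instead it bounds the integrand directly. The upper bound $1+s^2-2m/s^{n-2}\geq s^2-a(m)^2$ leads to $h_m(a(m))\leq \frac{\pi}{2a(m)}$, which after your substitution is exactly your bound $I\leq \frac{1}{a}\int_1^\infty \frac{dt}{t\sqrt{t^2-1}}$. For the behaviour as $m\to 0^+$, however, the paper uses a rather laborious chain of pointwise inequalities to obtain a divergent lower bound, whereas your monotone-convergence argument to $\int_1^\infty \frac{dt}{t\sqrt{1-t^{2-n}}}=\infty$ is much shorter. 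More significantly, the paper's proof does not spell out the monotonicity of $r_0$ and $b$ in $m$ at all, whereas your substitution makes $\partial_{a^2}\bigl(1+a^2t^2-(1+a^2)t^{2-n}\bigr)=t^2-t^{2-n}\geq 0$ immediate and hence gives the strict monotonicity of $I$ (and thus of $r_0$, $b$) for free. Your approach is genuinely more efficient here.
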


\begin{proof}
It is easy to see that $a(m)$ is a continuous increasing function of 
$m$. Since the function $\rho \mapsto 1 + \rho^2 - \frac{2m}{\rho^{n-2}}$ 
is increasing, we know that $\rho_-(m) \leq a(m) \leq \rho_+(m)$ 
provided that
\[\left\lbrace\begin{aligned}
 0 & \leq 1 + \rho_+^2 - \frac{2m}{\rho_+^{n-2}},\\
 0 & \geq 1 + \rho_-^2 - \frac{2m}{\rho_-^{n-2}}.
\end{aligned}\right.\]
One can select $\rho_+ = (2m)^{1/n}$. Assuming $m > 1$, we choose
\[
\rho_- = (2m)^{1/n} \sqrt{1 - \frac{1}{(2m)^{2/n}}}.
\]
Simple computations show that both inequalities are fulfilled. Hence 
for large $m$, $a(m) \sim (2m)^{1/n}$. For small positive $m$, we
obviously have $0 < a(m) < \rho_+(m)$. So $a(m) \to 0$ when $m \to 0^+$.

We next turn our attention to the function $r_0$. We first give an
upper bound for $h_m(a(m))$ as follows. Note that on the interval
$(a(m), \infty)$ we have
\[\begin{split}
1 + s^2 - \frac{2m}{s^{n-2}} 
&\geq 
1 + s^2 - \frac{2m}{a(m)^{n-2}} \\
&= 
1 + s^2 - (1+a(m)^2) \\
&= 
s^2 - a(m)^2.
\end{split}\]
Hence,
\[
\ln\left(\coth \frac{r_0(m)}{2}\right)
= h_m(a(m))
\leq \int_{a(m)}^\infty \frac{ds}{s \sqrt{s^2-a(m)^2}}
= \frac{\pi}{2a(m)}.
\]
This implies that $r_0(m) \to \infty$ as $m \to \infty$.

In order to estimate $r_0$ when $m \to 0^+$ we give a lower bound for
$h_m(a(m))$, assuming $a(m)<1$,
\[\begin{split}
\ln \left(\coth \frac{r_0(m)}{2}\right)
&= 
h_m(a(m)) \\
&= 
\int_{a(m)}^\infty 
\frac{ds}{s \sqrt{1+s^2 - \frac{a(m)^n + a(m)^{n-2}}{s^{n-2}}}}\\
&= 
\int_{a(m)}^\infty 
\frac{ds}{s^{2-\frac{n}{2}} \sqrt{s^n - a(m)^n + s^{n-2} - a(m)^{n-2}}}\\
&\geq
\int_{a(m)}^\infty 
\frac{ds}{s^{2-\frac{n}{2}} \sqrt{(s-a(m))(n s^{n-1} + (n-2) s^{n-3})}}\\
&\geq 
\int_{a(m)}^\infty \frac{ds}{\sqrt{s} \sqrt{(s-a(m))(n s^2 + (n-2))}}\\
&\geq 
\frac{1}{\sqrt{2n-2}} \int_{a(m)}^1 \frac{ds}{\sqrt{s(s-a(m))}}\\
&=
\frac{1}{\sqrt{2n-2}} \int_1^{\frac{1}{a(m)}} \frac{dt}{\sqrt{t(t-1)}}.
\end{split}\]
It is obvious that the last integral diverges when $a(m) \to 0^+$. Hence 
$r_0(m) \to 0$ when $m \to 0$.

The limits of $b$ follow from the relation 
$b(m) = \frac{e^{r_0(m)}-1}{e^{r_0(m)}+1}$.
\end{proof}

\subsection{The case of negative mass}

Remark that when $m < 0$ the function $h(m)$ tends to a finite
positive value at $\rho=0$. Changing to the $r$ coordinate,
this means that the metric
$g = \phi^{\frac{4}{n-2}} (dr^2 + \sinh^2 r \sigma)$ is
only defined for $r \geq r_0(m)$ such that
\[
h_m(0) 
= \int_0^\infty \frac{ds}{s \sqrt{1+s^2 - \frac{2m}{s^{n-2}}}}
= \ln \frac{1+e^{-r_0(m)}}{1-e^{-r_0(m)}}.
\]
The function $\phi$ satisfies $\phi(r_0(m)) = 0$.

\begin{proposition} \label{propAdSSchNeg}
The function $m \mapsto r_0(m)$ is continuous and strictly
decreasing on the interval $(-\infty, 0)$. Further,
\begin{enumerate}
\item
\[\left\lbrace
\begin{aligned}
\lim_{m \to 0^-} r_0(m) &= 0 \\
\lim_{m \to -\infty} r_0(m) &= \infty \\
\end{aligned}
\right.\]
\item
The function $f_m\definedas\phi : \bH^n \setminus B_{r_0(m)}(0) \to \bR_+$ 
solves the Yamabe equation with zero boundary value on
$\partial B_{r_0(m)}(0)$ and satisfies $f_m<1$. 
\item
There exists a constant $C>0$ independent of $m$ such that 
$f_m \geq 1-Cme^{-nr}$ provided that $r\geq r_1(m)$, where $r_1$ is a 
non-increasing continuous function of $m$ such that $r_1(m)>r_0(m)$. 
Consequently, $f_m = 1 + O(e^{-n r})$ when $r \to \infty$. 
\item 
$\partial _r f_m>0$.
\end{enumerate}
\end{proposition}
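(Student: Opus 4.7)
The plan is to adapt the arguments of Propositions \ref{propAdSSchPos1}--\ref{propAdSSchPos2} to the negative-mass regime. I would first verify that $h_m(0)$ is finite for $m < 0$: near $s = 0$ the integrand of $h_m$ behaves like $s^{(n-4)/2}/\sqrt{-2m}$, which is integrable for $n \geq 3$. Since $\partial_m(1 + s^2 - 2m/s^{n-2}) = -2/s^{n-2} < 0$, the integrand at each fixed $s > 0$ is strictly increasing in $m$, so $m \mapsto h_m(0)$ is strictly increasing, with continuity following from dominated convergence. As $r_0(m)$ is defined from $h_m(0)$ via the decreasing bijection $r \mapsto \log \coth(r/2)$ of $(0,\infty)$ onto itself, $r_0$ is continuous and strictly decreasing in $m$.

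For the limits, monotone convergence applied as $m \to 0^-$ (with the integrand increasing to $1/(s\sqrt{1+s^2})$, whose integral diverges logarithmically at $s=0$) yields $h_m(0) \to +\infty$, so $r_0(m) \to 0$. Monotone convergence applied as $m \to -\infty$ (with the integrand decreasing pointwise to zero, dominated by the integrand at any fixed $m_0<0$) yields $h_m(0) \to 0$, so $r_0(m) \to +\infty$.

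For property (2), the Yamabe equation for $f_m = \phi$ follows from the conformal identity $g_{\adss} = \phi^{4/(n-2)} b$ together with $\scal^{g_{\adss}} = -n(n-1)$, exactly as in the positive-mass derivation above. The boundary value $f_m(r_0(m)) = 0$ comes from $\phi^{2/(n-2)} = \rho/\sinh r$ and $\rho \to 0$ as $r \to r_0(m)^+$. The bound $f_m < 1$ rests on the inequality $h_m(\rho) < h_0(\rho)$ for $m < 0$ (since $1 + s^2 - 2m/s^{n-2} > 1 + s^2$), which via the decreasing map from $h$ to $r$ gives $\sinh r > \rho$, whence $\phi < 1$.

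The main obstacle is property (3), producing the uniform asymptotic estimate. The plan is to mirror Proposition \ref{propAdSSchPos1}, writing $h_m(\rho) = \int_\rho^\infty (s\sqrt{1+s^2})^{-1}(1 - 2m/((1+s^2)s^{n-2}))^{-1/2}\,ds$ and Taylor-expanding the inner square root to obtain $|h_m(\rho) - h_0(\rho)| \leq C|m|\rho^{-n}$ for $\rho$ past an $m$-dependent threshold. Translating this into a bound on $f_m - 1$ via $\phi^{2/(n-2)} = \rho/\sinh r$ should produce the claimed decay, with $r_1(m)$ chosen continuous and non-increasing by a parallel construction to the positive-mass case. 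Finally, property (4) follows from the Yamabe equation rewritten as $\partial_r(\sinh^{n-1}r\,\partial_r f_m) = \tfrac{n(n-2)}{4}\sinh^{n-1}r\,(f_m^{(n+2)/(n-2)} - f_m) < 0$ (using $f_m < 1$): if $\partial_r f_m(\bar{r}) \leq 0$ at some $\bar{r}$, then $\sinh^{n-1}r\,\partial_r f_m$ remains strictly negative for $r > \bar{r}$, forcing $f_m$ to be strictly decreasing there, which contradicts $f_m \nearrow 1$ at infinity.
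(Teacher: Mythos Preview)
Your proposal is correct and follows essentially the same approach as the paper: monotonicity of the integrand in $m$ gives continuity and monotonicity of $h_m(0)$ (hence of $r_0$), the limits come from monotone/dominated convergence, and properties (2)--(4) are obtained by mirroring the positive-mass arguments of Proposition~\ref{propAdSSchPos1} with the obvious sign changes. In fact the paper is terser than you are---it simply defers to the $m>0$ case for (2)--(4) and only adds the explicit choice $r_1(m)=\max\{R_1, r((-Cm)^{1/n})\}$, which you could also record to make the non-increasing dependence on $m$ transparent.
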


\begin{proof}
We remark that the integrand is positive and strictly increasing with
respect to $m$. From dominated convergence, it is easy to argue that
$m \mapsto h_m(0)$ is continuous. When $|m| \to \infty$, the integrand
tends to $0$ so $\lim_{m \to - \infty} h_m(0) = 0$. This forces
$\lim_{m \to - \infty} r_0(m) = \infty$. Similarly, when $m \to 0^-$,
by the monotone convergence theorem,
\[
h_m(0) \to \int_0^\infty \frac{ds}{s \sqrt{1+s^2}} = \infty.
\]
Hence, $\lim_{m \to 0^-} r_0(m) = 0$. 

The properties of $f_m$ follow in the same manner as their counterparts
in the case $m>0$ (see Proposition \ref{propAdSSchPos1}). We only remark
 that having fixed $R_1>0$ one may define 
$r_1(m)$ as $r_1(m)\definedas \max\{R_1, r((-Cm)^{1/n})\}$, where the 
constant $C>0$ depends on $R_1$ only. It is then obvious that 
$r_1(m)>r_0(m)$.
\end{proof}

\subsection{A characterization of anti-de~Sitter-Schwarzschild 
spacetimes}

In this section, we give a characterization of anti-de~Sitter-Schwarzschild 
metrics which is useful in the proof of Theorem \ref{thmNearPMT1}. See 
\cite{KobayashiObata} for similar results.

\begin{proposition} \label{propLCFStatic}
Let $K$ be a compact subset of $\bH^n$ such that $\bH^n \setminus K$ is
connected and let $U, V$ be two functions defined on $\bH^n \setminus K$. 
Let $g \definedas U^\kappa b$. Assume that the metric 
\[
-V^2 dt^2 + g
\] 
is static with cosmological constant
\[
\Lambda = -\frac{n(n-1)}{2}.
\]
Assume further that the function $U$ is bounded from above and away from 
zero and that the function $V$ is positive, tends to infinity at infinity
and has no critical point outside a compact set. Then there is a point 
$x_0 \in \bH^n$ and $m \in \bR$ such that 
\[
U = f_m(r),
\]
where $r \definedas d^b(x_0, \cdot)$.
\end{proposition}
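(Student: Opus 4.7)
The plan is to show that the level sets of $V$ outside a compact set are geodesic spheres in $(\bH^n, b)$ around a common point $x_0$, whereupon $U$ depends only on $r = d^b(x_0, \cdot)$ and the classification becomes the rotationally symmetric ODE analysis already carried out in this appendix.

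First I would take the trace of \eqref{eqStatic2} to obtain $\Delta^g V = nV$; combined with $\scal^g = -n(n-1)$, the static equation can then be rewritten in the traceless form $V\,\tlric^g = \tlhess^g V$. Since $g = U^\kappa b$ is conformal to the hyperbolic metric, it is locally conformally flat on $\bH^n \setminus K$, so its Weyl tensor vanishes and (equivalently in dimension three) its Cotton tensor vanishes. Differentiating the static equation and using the Codazzi-type identity for the Schouten tensor that holds on locally conformally flat manifolds, one deduces at every point where $dV \neq 0$ that the second fundamental form of the level set $\{V = c\}$ is a scalar multiple of the induced metric. Thus the level sets of $V$ are totally umbilic in $g$ outside the compact set containing the critical points of $V$.

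Next I would use that total umbilicity of a hypersurface is conformally invariant, so that the level sets of $V$ are also totally umbilic in $(\bH^n, b)$. The connected complete totally umbilic hypersurfaces of hyperbolic space consist of geodesic spheres, horospheres, totally geodesic copies of $\bH^{n-1}$, and equidistant hypersurfaces, and among these only the geodesic spheres are compact. The hypotheses that $V$ is positive, has no critical points outside a compact set, and tends to infinity at infinity force each sufficiently large level set $\{V = c\}$ to contain a compact component separating $K$ from infinity, and that component must therefore be a $b$-geodesic sphere. A continuity argument, together with the observation that a smooth one-parameter family of nested $b$-geodesic spheres with monotone radii must be concentric, then shows that all of these spheres share a single center $x_0 \in \bH^n$.

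Once $V$ is known to depend only on $r = d^b(x_0, \cdot)$ in the exterior of a $b$-ball around $x_0$, plugging back into the static equation shows that $\ric^g$ is rotationally symmetric about $x_0$, and hence so is $g$; since $b$ is already rotationally symmetric about $x_0$, the conformal factor $U$ must be a function of $r$ alone. The problem thus reduces to the rotationally symmetric case treated in the preceding subsections, whose solutions form exactly the one-parameter family $\{f_m\}_{m \in \bR}$, with the boundedness of $U$ away from zero and from infinity selecting the admissible branch. The main obstacle will be the umbilicity step: the Codazzi-type identities for conformally flat metrics need to be combined carefully with the static equation, and one has to keep track of the location of the critical points of $V$ to ensure that the level sets on which the umbilicity argument is performed are genuine smooth hypersurfaces.
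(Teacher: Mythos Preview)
Your strategy matches the paper's: exploit conformal flatness and the static equation to show the level sets of $V$ are umbilic, recognise them as round $b$-spheres, and then reduce to the rotationally symmetric ODE. The first half of your outline (deriving umbilicity from the vanishing Cotton tensor and the relation $V\,\tlric^g=\tlhess^g V$) is exactly right. The difficulty is in the second half, where two steps do not go through as stated.

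First, the assertion that ``a smooth one-parameter family of nested $b$-geodesic spheres with monotone radii must be concentric'' is false without further input: one can foliate an annular region of $\bH^n$ by non-concentric geodesic spheres (just let the centres drift slowly relative to the growth of the radii). Umbilicity in $b$ by itself gives you geodesic spheres, but not a common centre.

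Second, even granting $V=V(r)$, the inference ``static equation $\Rightarrow$ $\ric^g$ rotationally symmetric $\Rightarrow$ $g$ rotationally symmetric'' is circular: the static equation reads $\ric^g=\tfrac{1}{V}\hess^g V - n g$, and $\hess^g V$ is computed with the $g$-connection, which you do not yet know is rotationally symmetric. Knowing $V=V(r)$ says nothing about the angular behaviour of $\Gamma(g)$, hence nothing about the angular behaviour of $\hess^g V$ or $\ric^g$.

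The paper avoids both problems by extracting more from the Cotton-tensor computation than bare umbilicity. A careful contraction of $\nabla_i\tlric_{jk}-\nabla_j\tlric_{ik}=0$ against $\nabla V$ yields the pointwise form
\[
\tlric_{ij}=\mu\Big(\tfrac{\nabla_iV}{V}\tfrac{\nabla_jV}{V}-\tfrac{1}{n}\big|\tfrac{dV}{V}\big|_g^2\,g_{ij}\Big),
\]
from which one reads off that $|dV|_g$, $\mu$, and hence the umbilicity factor in $S_{ij}=\tfrac{\hess^g_{ij}V}{|dV|_g}$ are \emph{constant on each level set}, and that the induced metric on each level set has constant curvature (Gauss equation). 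These two facts together, not umbilicity alone, force $g$ to be a warped product $g=ds^2+f(s)\sigma$ in the $g$-distance coordinate $s$. Only \emph{after} this is established does the paper show that $U$ depends on $s$ alone, and it does so not via the static equation but via the conformal transformation law for the mean curvature of the level sets combined with a growth argument (if $\partial_\theta U\neq 0$ it would have to grow like $e^{2s}$, contradicting the decay of $U-1$). The identification with $f_m$ then follows by matching Dirichlet data on one sphere and invoking uniqueness for the Yamabe equation. You should also note that connectedness of the large level sets of $V$ needs an argument (the paper uses a Poincar\'e--Hopf index count).
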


Before diving into the proof, we explain briefly the underlying idea.
The main aim is to prove that the metric $g$ and the lapse function $V$ 
are spherically symmetric around a point in $\bH^n$. A first indication
of this fact is Equation \eqref{eqCotton4} which proves that the Ricci
tensor has at most two distinct eigenvalues, one with multiplicity $1$
in the direction of the gradient of $V$ and another one with multiplicity
$n-1$ on the orthogonal hyperplane. Another indication is given by
Formula \eqref{eqSFF} which proves that the level sets of $V$ are umbilic
with constant sectional curvature. These two indications prove that
the metric is actually a warped product (Formula \eqref{eqWarped})
and some further estimates on $U$ allow us to conclude that $U$ 
coincides with $f_m$ for some $m$.

\begin{proof}
In what follows, covariant derivatives and curvatures are defined with 
respect to the metric $g$ unless stated otherwise.

Since $g = U^\kappa b$ on $\bH^n \setminus B_{R_1}$ is conformally flat,
it has vanishing Cotton-York tensor (see for example
\cite[Proposition 1.62]{HamiltonRicci}). Since $g$ has constant scalar
curvature this is equivalent to
\begin{equation}\label{eqCotton}
\nabla_i \tlric_{jk} - \nabla_j \tlric_{ik} = 0. 
\end{equation}
From the static equations \eqref{eqStatic3}-\eqref{eqStatic4} it 
follows that
\[
\tlric = \frac{\tlhess~V}{V},
\] 
where $\tlric \definedas \ric + (n-1)g$ denotes the traceless Ricci tensor
and $\tlhess~V$ denotes the traceless Hessian of $V$ (which in index 
notation is denoted by $\tlhessdd{i}{j} V$). From \eqref{eqCotton} 
and the fact that
\[
\frac{\hess~V}{V} 
= \frac{1}{V} \left( \tlhess~V + \frac{\Delta V}{n} g \right) 
= \frac{\tlhess~V}{V} + g,
\]
we conclude
\[\begin{split}
0 
&= 
\nabla_i \frac{\tlhessdd{j}{k} V}{V} 
- \nabla_j \frac{\tlhessdd{i}{k} V}{V}\\
&= 
\nabla_i \frac{\hessdd{j}{k} V}{V} 
- \nabla_j \frac{\hessdd{i}{k} V}{V}\\
&= 
\frac{\nabla_i \nabla_j \nabla_k V}{V} 
- \frac{\nabla_j \nabla_i \nabla_k V}{V} 
- \frac{\hessdd{j}{k} V}{V} \frac{\nabla_i V}{V} 
+ \frac{\hessdd{i}{k} V}{V} \frac{\nabla_j V}{V}\\
&= 
- \riemuddd{l}{k}{i}{j} \frac{\nabla_l V}{V} 
- \frac{\hessdd{j}{k} V}{V} \frac{\nabla_i V}{V} 
+ \frac{\hessdd{i}{k} V}{V} \frac{\nabla_j V}{V}.
\end{split}\]
Since $g$ is conformally flat its Weyl tensor vanishes, so
\[
\riem 
= \frac{\scal}{2n(n-1)} g \kulk g + \frac{1}{n-2} \tlric \kulk g 
= -\frac{1}{2} g \kulk g + \frac{1}{n-2} \tlric \kulk g,
\]
where $\kulk$ denotes the Kulkarni-Nomizu product (see for example
\cite[Definition 1.110]{Besse}). As a consequence, we get
\begin{equation} \label{eqCotton2}
\begin{split}
0 &= 
- \riemuddd{l}{k}{i}{j} \frac{\nabla_l V}{V} 
- \frac{\hessdd{j}{k} V}{V} \frac{\nabla_i V}{V} 
+ \frac{\hessdd{i}{k} V}{V} \frac{\nabla_j V}{V} \\
&= 
\left(g_{li} g_{kj} - g_{lj} g_{ki}\right) \frac{\nabla^l V}{V} 
- \frac{\hessdd{j}{k} V}{V} \frac{\nabla_i V}{V} 
+ \frac{\hessdd{i}{k} V}{V} \frac{\nabla_j V}{V} \\
&\qquad 
- \frac{1}{n-2} \left(\tlric_{li} g_{kj} + \tlric_{kj} g_{li} 
- \tlric_{lj} g_{ki} - \tlric_{ki} g_{lj}\right) \frac{\nabla^l V}{V} \\
&= 
- \frac{\tlhessdd{j}{k} V}{V} \frac{\nabla_i V}{V} 
+ \frac{\tlhessdd{i}{k} V}{V} \frac{\nabla_j V}{V} \\
&\qquad 
- \frac{1}{n-2} \left(\frac{\tlhessdd{l}{i} V}{V} g_{kj} 
+ \frac{\tlhessdd{k}{j} V}{V} g_{li} 
- \frac{\tlhessdd{l}{j} V}{V} g_{ki} 
- \frac{\tlhessdd{k}{i} V}{V} g_{lj}\right) \frac{\nabla^l V}{V} \\
&= 
\frac{n-1}{n-2} \left(\frac{\tlhessdd{i}{k} V}{V} \frac{\nabla_j V}{V} 
- \frac{\tlhessdd{j}{k} V}{V} \frac{\nabla_i V}{V}\right) \\
&\qquad 
- \frac{1}{n-2} \left(\frac{\tlhessdd{l}{i} V}{V} 
\frac{\nabla^l V}{V} g_{kj} 
- \frac{\tlhessdd{l}{j} V}{V} \frac{\nabla^l V}{V} g_{ki} \right).
\end{split}
\end{equation}
We set $\xi_i \definedas 
\frac{\tlhessdd{i}{j} V}{V} \frac{\nabla^j V}{V}$. Contracting the 
previous equation with $\frac{\nabla^k V}{V}$ we get
\[
0 = \xi_i \frac{\nabla_j V}{V} - \xi_j \frac{\nabla_i V}{V}.
\]
This is possible only if $\xi$ and $\frac{\nabla V}{V}$ are colinear. 
We let $\lambda$ be the function such that 
$\xi = (n-1) \lambda \frac{\nabla V}{V}$. Equation \eqref{eqCotton2} 
then implies
\begin{equation*} 
0 = 
\left(\frac{\tlhessdd{i}{k} V}{V} 
+ \lambda g_{ik}\right)\frac{\nabla_j V}{V} 
- \left(\frac{\tlhessdd{j}{k} V}{V} 
+ \lambda g_{jk}\right)\frac{\nabla_i V}{V}.
\end{equation*}
This is possible only if 
\[
\frac{\tlhessdd{i}{k} V}{V} + \lambda g_{ik} 
= 
\mu \frac{\nabla_i V}{V} \frac{\nabla_k V}{V}
\] 
for some function $\mu$. The trace of this last equation gives a 
direct relation between $\lambda$ and $\mu$, 
\[
\lambda = \frac{\mu}{n} \left|\frac{dV}{V}\right|^2.
\] 
Hence,
\begin{equation}\label{eqCotton4}
\tlric_{ij} 
= \frac{\tlhessdd{i}{j} V}{V} 
= \mu \frac{\nabla_i V}{V} \frac{\nabla_j V}{V} 
- \frac{\mu}{n} \left|\frac{dV}{V}\right|^2 g_{ij}.
\end{equation}
By a straightforward calculation, we have
\begin{equation}\label{eqdV}
\nabla_i \left|\frac{dV}{V}\right|^2 
= 2 \left(1 + \left[\mu \left(1-\frac{1}{n}\right)-1\right] 
\left|\frac{dV}{V}\right|^2\right) \frac{\nabla_i V}{V}.
\end{equation}
We now choose $V_0$ to be such that $V$ has no critical point outside
$V^{-1}(-\infty, V_0)$. We remark that $V^{-1}(V_0, \infty)$ is connected.
Indeed if it was not, from the assumption that $V$ is proper it would
have one bounded connected component $\Omega$. Since $V = V_0$ on
$\partial \Omega$, $V$ reaches a local maximum on $\Omega$ which 
contradicts the assumption that $V$ has no critical point on $\Omega$.
We let $\Sigma_0$ be the boundary of a connected component of 
$V^{-1}(-\infty, V_0)$. Equation \eqref{eqdV} shows that 
$\left|\frac{dV}{V}\right|^2$ is constant along $\Sigma_0$. Plugging
Equations \eqref{eqCotton4} and \eqref{eqdV} into
\[
\nabla_i \frac{\tlhessdd{j}{k} V}{V} - \nabla_j \frac{\tlhessdd{i}{k} V}{V} 
= 0,
\]
we get
\begin{equation*} 
\begin{split}
0 
&= 
\nabla_i \mu \left( \frac{\nabla_j V}{V}\frac{\nabla_k V}{V} 
- \frac{1}{n} \left|\frac{dV}{V}\right|^2 g_{jk} \right)
- \nabla_j \mu \left(\frac{\nabla_i V}{V}\frac{\nabla_k V}{V} 
- \frac{1}{n} \left|\frac{dV}{V}\right|^2 g_{ik}\right) \\
&\qquad 
+ \mu \underbrace{\left[1 + \frac{2}{n} 
\left(1 - \left|\frac{dV}{V}\right|^2\right)
+ \frac{\mu}{n}\left(1-\frac{2}{n}\right) 
\left|\frac{dV}{V}\right|^2\right]}_{\displaystyle{=: \theta}}
\left(g_{ik} \frac{\nabla_j V}{V} - g_{jk} \frac{\nabla_i V}{V}\right).
\end{split}
\end{equation*}
Taking the trace of this last equation with respect to $j$ and $k$ we get
\[
\frac{1}{n} \left|\frac{dV}{V}\right|^2 \nabla_i \mu 
= 
\left[\left\<d\mu, \frac{dV}{V}\right\> 
+ (n-1) \theta \mu \right] \frac{\nabla_i V}{V}.
\]
This implies that $\mu$ is constant on the hypersurface $\Sigma_0$. The 
second fundamental form $S$ of $\Sigma_0$ is equal to the normalized 
Hessian of $V$ restricted to $T \Sigma_0$, that is
\begin{equation}\label{eqSFF}
S_{ij} 
= \frac{\hessdd{i}{j} V}{|dV|} 
= \frac{V}{|dV|} 
\left(1 - \frac{\mu}{n} \left|\frac{dV}{V}\right|^2\right) g_{ij}.
\end{equation}
Hence the hypersurface $\Sigma_0$ is umbilic with constant mean 
curvature. From the conformal transformation law of the second 
fundamental form, $\Sigma_0$ is umbilic for the hyperbolic metric
$b$ as well. Since $\Sigma_0$ is also compact it is a round sphere.

Note that the curvature of $\Sigma_0$ is given by the Gauss Formula,
\[
\riem^{\Sigma_0} = \riem + \frac{1}{2} S \kulk S.
\]
From the form of the Riemann tensor of $g$ and the special form of 
$S$, we immediately conclude that the metric induced on $\Sigma_0$ has 
constant curvature,
\[ \begin{aligned}
\riem^{\Sigma_0}
&= 
-\frac{1}{2} g \kulk g + \frac{1}{n-2} \tlric \kulk g 
+ \frac{1}{2} \frac{\hess V}{|dV|} \kulk \frac{\hess V}{|dV|} \\
&= 
-\frac{1}{2} g \kulk g 
+ \frac{1}{n-2} \frac{\tlhess V}{V} \kulk g 
+ \frac{1}{2} \frac{V^2}{\left|dV\right|^2}
\left(\frac{\tlhess V}{V} + g\right) \kulk 
\left(\frac{\tlhess V}{V} + g\right) \\
&= 
-\frac{1}{2} g \kulk g 
- \frac{1}{n-2} \frac{\mu}{n} \left|\frac{dV}{V}\right|^2 g \kulk g 
+ \frac{1}{2} \frac{V^2}{\left|dV\right|^2}
\left(1-\frac{\mu}{n} \left|\frac{dV}{V}\right|^2\right)^2 g \kulk g\\
&= 
\left[-\frac{1}{2} 
- \frac{1}{n-2} \frac{\mu}{n} \left|\frac{dV}{V}\right|^2 
+ \frac{1}{2} \frac{V^2}{\left|dV\right|^2}
\left(1-\frac{\mu}{n} \left|\frac{dV}{V}\right|^2\right)^2\right] 
g \kulk g,
\end{aligned} \]
where we used the fact that
\[
\frac{\tlhess }{V} 
= - \frac{\mu}{n} \left|\frac{dV}{V}\right|^2 g
\]
when restricted to $T \Sigma_0$.

We claim that the level set $V^{-1}(V_0)$ is connected. Assume that it
contains two connected components $\Sigma_0, \Sigma_1$. Since 
$V^{-1}(V_0, \infty)$ is connected and open, it is path connected so 
we can join $\Sigma_0$ and $\Sigma_1$ by a path $\gamma$ in 
$V^{-1}(V_0, \infty)$. If $v$ is larger than $V_0$ and the supremum of 
$V$ on $\gamma$, then $\Sigma_0$, $\Sigma_1$ and $\gamma$ are contained
in the same connected component $B$ of $V^{-1}(-\infty, v) \cup K$ which
is a ball. Then, for the gradient vector field $\nabla V$ the two 
hypersurfaces $\Sigma_0$ and $\Sigma_1$ are sources while the boundary
of $B$ is the only sink. Since $\nabla V$ has no zero outside
$V^{-1}(-\infty, v)$ this contradicts the Poincar\'e-Hopf theorem.

Note that our reasoning applies to any $v$ larger than $V_0$, so the
level sets $V^{-1}(v)$ are all round spheres.

From \eqref{eqdV} we conclude that $\left|\frac{dV}{V}\right|^2$ can
be expressed as a smooth function of $V$. We define a function
$s: \bH^n \setminus V^{-1}(V_0, \infty) \to \bR_+$ as 
$s \definedas f \circ V$ where 
\[
f(v) \definedas \int_{V_0}^v \frac{1}{|dV|}.
\]
Then $|ds| = 1$ so $s$ can be interpreted as the distance function
from $V^{-1}(V_0, \infty)$, see \cite{Petersen}. The second fundamental 
form of the level sets of $s$ is given by \eqref{eqSFF} so we see that 
the metric $g$ is rotationally symmetric.

Our next step is to prove that the conformal factor can be expressed
as a function of $s$.

We remark that we can reproduce the proof of Lemma \ref{lmUniformBound}
replacing $B_{R_0}$ by $V^{-1}(V_0-\epsilon, \infty)$ and find two functions
$f_\pm$ solving the Yamabe equation \eqref{eqYamabeb} such that
$f_- \leq U \leq f_+$ together with
$\left| \nabla^{(k)} (f_\pm - 1) \right| \leq A_k e^{-nr}$ for any integer
$k \geq 0$.

Since the conformal factor is bounded away from zero and from infinity,
the metrics $g$ and $b$ are uniformly equivalent. Hence, taking points
located further and further from $V^{-1}(V_0, \infty)$ with respect to
the hyperbolic metric yields points with $s$ going to infinity. This
proves that $s$ is unbounded.

The conformal transformation law of the mean curvature of the spheres
of constant $s$ is given by
\begin{equation}\label{eqMeanConf}
H^b U^{1-\kappa} = H^g U - \frac{2(n-1)}{n-2} \partial_s U.
\end{equation}

We choose a coordinate chart $(\theta^\mu)$ on the sphere and use it to
define Fermi coordinates on $\bH^n \setminus V^{-1}(V_0, \infty)$, so that
\begin{equation}\label{eqWarped}
g = ds^2 + f(s) \sigma_{\mu\nu} d\theta^\mu d\theta^\nu.
\end{equation}
From our previous discussion, both $H^g$ and $H^b$ are functions of $s$
only so it follows from \eqref{eqMeanConf}
that for any $\mu$,
\[
(1-\kappa) H^b U^{-\kappa} \partial_\mu U 
= H^g \partial_\mu U - \frac{2(n-1)}{n-2} \partial_s \partial_\mu U.
\]

As $s$ increases, the spheres of constant $s$ become larger and larger
and located further and further from $V^{-1}(V_0, \infty)$ so 
$H^b \to n-1$. From Formula \eqref{eqMeanConf} and the estimate on 
$U$ the same is true for $H^g$. As a consequence, the previous equation 
for $\partial_\mu U$ can be written as
\[
 \partial_s \partial_\mu U = (2 + o(1)) \partial_\mu U. 
\]

In particular, $\partial_\mu U$ grows as $e^{2s}$ unless $\partial_\mu U = 0$.
Such a growth is inconsistent with the decay assumption
$|\nabla U| = O(e^{-nr})$. This implies that $U$ is constant on the level
sets of $s$.

Without loss of generality, we can assume that the level set $V=V_0$ 
is a sphere of radius $R_1$ centered at the origin of the hyperbolic 
space. From Propositions \ref{propAdSSchPos2} and \ref{propAdSSchNeg}, 
there are constants $m_-$ such that $f_{m_-}(R_1) = 0$ and $m_+$ such that
$f_{m_+}(r) \to \infty$ when $r \to R_1$. By the intermediate value theorem,
there exists $m \in (m_-, m_+)$ such that $f_m(R_1)$ equals the value of
$U$ on $B_{R_1}$. By uniqueness of the solution of the Yamabe equation
\eqref{eqYamabeb} with Dirichlet boundary values, we conclude that 
$U = f_m$ on $\bH^n \setminus B_{R_1}$. By analytic continuation, this 
equality must hold everywhere on $\bH^n \setminus K$.
\end{proof}

\section{A density result}
\label{secDensity}


In this second appendix, we show that any asymptotically hyperbolic 
metric satisfying the decay assumptions of the positive mass theorem 
can be approximated by metrics which are conformal to the hyperbolic 
metric outside a compact set, while changing the mass functional by an 
arbitrarily small amount. This result is a refinement of 
\cite[Proposition 6.2]{ChruscielDelay}.

\begin{proposition}\label{propDensity}
Let $(M, g)$ be a $C^{2, \alpha}_\tau$-asymptotically hyperbolic manifold
for $\alpha\in (0,1)$ and $\tau > 0$ meaning that there exists a diffeomorphism
\[
\Phi: M \setminus K \to \bH^n \setminus B_{R_0}
\]
such that $e \definedas \Phi_* g - b$ belongs to $C^{2, \alpha}_\tau(M, S^2M)$,
that is to say $e \in C^{2, \alpha}_{loc}(M, S^2M)$ is such that
\[
\left\|e \right\|_{C^{2, \alpha}_\delta(\bH^n\setminus B_{R_0}, S^2M)} 
\definedas 
\sup_{x \in \bH^n\setminus B_{R_0+1}} e^{\delta r(x)} 
\left\|e \right\|_{C^{2, \alpha}(B_1(x), S^2M)}
< \infty.
\]
Assume further that $\scal^g \in L^\infty$ and $\scal^g \geq -n(n-1)$.
Then for any $\epsilon > 0$, there exist $R > R_0$ and $\lambda_R$ such that
\begin{itemize}
\item 
$\left|\lambda_R - g\right|_g < \epsilon$;
\item 
$\Phi_* \lambda_R$ is conformal to $b$ outside $B_R$, that is
$\Phi_* \lambda_R = U^\kappa b$ with $U \to 1$ at infinity;
\item 
$\scal^{\lambda_R} \geq -n(n-1)$ and $\scal^{\lambda_R} = -n(n-1)$
on $\bH^n \setminus B_R$.
\end{itemize}

In addition, assuming that $\tau > \frac{n}{2}$ and 
$\int_M (\scal^g +n(n-1)) \cosh r \, d\mu^g < \infty$,
we can also ensure that 
\[
\left| H_{\Phi}^{\lambda_R} (V_{(i)}) - H_{\Phi}^g (V_{(i)}) \right| < \epsilon
\]
for $i=0, \ldots, n$.
\end{proposition}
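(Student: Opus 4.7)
The plan is a two-step construction---a cut-and-paste of $g$ with $b$ outside a large ball, followed by a small conformal correction to enforce the scalar curvature inequality---together with a bulk-integral estimate for the mass functional.

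For the construction, identify $M\setminus K$ with $\bH^n\setminus\overline{B}_{R_0}$ via $\Phi$ and write $\Phi_*g=b+e$ with $\|e\|_{C^{2,\alpha}_\tau}<\infty$. For $R$ large, fix a smooth cutoff $\chi_R\in C^\infty(\bH^n)$ equal to $1$ on $B_R$ and vanishing outside $B_{R+1}$, and define an intermediate metric $\tilde g_R$ by $\tilde g_R=g$ on $K$ and $\Phi_*\tilde g_R\definedas b+\chi_R e$ elsewhere. Then $\tilde g_R=g$ on $\Phi^{-1}(B_R)$, $\tilde g_R=b$ outside $\Phi^{-1}(B_{R+1})$, and the decay of $e$ gives both $|\tilde g_R-g|_g\leq Ce^{-\tau R}$ uniformly on $M$ and $\scal^{\tilde g_R}+n(n-1)=O(e^{-\tau R})$ pointwise on the transition annulus. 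To enforce $\scal\geq-n(n-1)$ globally and $\scal=-n(n-1)$ outside $\Phi^{-1}(B_R)$, prescribe a smooth function $S_R\geq -n(n-1)$ on $M$ equal to $\scal^g$ on $\Phi^{-1}(B_{R-1})$, equal to $-n(n-1)$ outside $\Phi^{-1}(B_R)$, and interpolated monotonically on the annulus, and solve
\[
-\tfrac{4(n-1)}{n-2}\Delta^{\tilde g_R}w_R+\scal^{\tilde g_R}w_R = S_R w_R^{\kappa+1}
\]
for $w_R$ close to $1$ with $w_R\to 1$ at infinity. The function $w\equiv 1$ is an approximate solution with residual $\scal^{\tilde g_R}-S_R=O(e^{-\tau R})$ supported on the annulus, so an implicit function argument on suitable weighted Hölder spaces yields a genuine solution with $\|w_R-1\|\to 0$. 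Setting $\lambda_R\definedas w_R^\kappa\tilde g_R$ then satisfies all three conditions with $|\lambda_R-g|_g<\epsilon$ for $R$ large.

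For the mass estimate, recall the bulk representation of the mass functional from \cite{ChruscielHerzlich,HerzlichMassFormulae}: for any $\rho>R_0$,
\[
H_\Phi^g(V) = \int_{S_\rho}\bigl[V(\divg^b e - d\tr^b e)+(\tr^b e)dV-e(\nabla^b V,\cdot)\bigr](\nu_\rho)\,d\mu^b + \int_{\bH^n\setminus B_\rho}\bigl[V(\scal^g+n(n-1))+\mathcal{Q}(e,V)\bigr]\,d\mu^b,
\]
where $\mathcal{Q}(e,V)$ is pointwise quadratic in $e$ and its first derivatives. Under the additional hypotheses $\tau>n/2$ (which makes $\mathcal{Q}(e,V_{(i)})$ absolutely integrable against $d\mu^b$) and $(\scal^g+n(n-1))\cosh r\in L^1$, the bulk integral converges absolutely and its tail over $\bH^n\setminus B_\rho$ vanishes as $\rho\to\infty$. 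Applying this formula to both $g$ and $\lambda_R$ at $\rho=R$ and using $\lambda_R=g$ on $\Phi^{-1}(B_R)$, the boundary term at $\partial B_R$ together with the interior contribution up to $B_R$ cancel in the difference, yielding
\[
\bigl|H_\Phi^{\lambda_R}(V_{(i)})-H_\Phi^g(V_{(i)})\bigr|\leq C\int_{\bH^n\setminus B_R}\bigl[|\scal^g+n(n-1)|\cosh r+|e|^2\cosh r\bigr]d\mu^b + o_R(1),
\]
each term of which can be made smaller than $\epsilon$ by choosing $R$ large enough.

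The delicate point will be the implicit function step for $w_R$: one must establish invertibility of the linearized operator $-\tfrac{4(n-1)}{n-2}\Delta^{\tilde g_R}+\scal^{\tilde g_R}-(\kappa+1)S_R$ between weighted Hölder spaces adapted to the asymptotically hyperbolic geometry, uniformly as $R$ varies. At infinity this operator converges to $\tfrac{4(n-1)}{n-2}(-\Delta^b+n)$, which is Fredholm on suitable weighted spaces with trivial kernel once the weights exclude $\cosh r$. Combined with the $O(e^{-\tau R})$ smallness of the forcing, this should provide the quantitative control $\|w_R-1\|\to 0$ needed to close both the construction and the mass estimate.
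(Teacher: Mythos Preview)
Your overall two-step plan (glue $g$ to $b$ outside a large ball, then conformally correct) and the use of the bulk mass formula match the paper's strategy, but there is a real gap at the implicit-function step you yourself flag as delicate. Your linearization at $w=1$ is $L=-\tfrac{4(n-1)}{n-2}\Delta^{\tilde g_R}+\scal^{\tilde g_R}-(\kappa+1)S_R$, whose zeroth-order term in the interior (where $\tilde g_R=g$ and $S_R=\scal^g$) equals $-\kappa\,\scal^g$. The hypotheses only say $\scal^g\geq -n(n-1)$ and $\scal^g\in L^\infty$; nothing prevents $\scal^g$ from being large and positive on a large compact region, making this potential very negative there, and then $L$ can have a nontrivial $L^2$-kernel. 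Fredholmness and the indicial analysis at infinity do not exclude such interior bound states, so invertibility of $L$ is simply not guaranteed by the assumptions. Your choice $S_R=\scal^g$ in the interior is precisely what creates the bad sign.

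The paper sidesteps this by a different target curvature together with constant barriers rather than IFT. Writing $\lambda_R=(1+v_R)^\kappa g_R$ and imposing $(\scal^{\lambda_R}+n(n-1))(1+v_R)^{\kappa+1}=\chi_R(\scal^g+n(n-1))$ yields an equation for $v_R$ whose zeroth-order coefficient is $\tfrac{4n(n-1)}{n-2}+(\scal^{g_R}+n(n-1))$, which is $\geq \tfrac{4n(n-1)}{n-2}+O(e^{-\tau R})$ and hence uniformly positive; the constants $\pm\epsilon$ are then sub/super\-solutions and no spectral hypothesis is needed. A smaller issue: in your mass argument you invoke ``$\lambda_R=g$ on $\Phi^{-1}(B_R)$'' to cancel the boundary term at $S_R$, but $w_R$ is not identically $1$ inside, so this equality fails. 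The paper instead fixes the boundary sphere at a radius $R_1$ independent of $R$, proves $\|e_R-e\|_{C^{2,\alpha}_{\tau'}}\to 0$ from $\|v_R\|_{C^{2,\alpha}_{\tau'}}\to 0$, and handles the scalar-curvature bulk term by dominated convergence.
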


Note that if $(M,g)$ is an asymptotically hyperbolic manifold in the 
above sense and $E$ is a geometric tensor bundle over $M$ then one can 
define weighted H\"older spaces $C_\delta^{k,\alpha}(M,E) \definedas 
\{e^{-\delta r} u \mid u \in C^{k,\alpha}(M,E)\}$ with respective norms given 
by $\left\|u \right\|_{C^{k, \alpha}_\delta(M,E)} \definedas 
\left\| e^{\delta r(x)} u \right\|_{C^{k, \alpha}(M,E)}$. We refer the reader to 
\cite{LeeFredholm} for more details on these spaces.

\begin{proof}
We select a smooth cut-off function $\chi: \bR \to \bR$ such that
$\chi \equiv 1$ on $(-\infty, 0)$ and $\chi \equiv 0$ on $(1, \infty)$.
We let $r$ denote the distance from the origin in $\bH^n$ and
set $\chi_R(x) \definedas \chi(r(x) - R)$ for $R > 0$. We define
the metric $g_R$ by
\[
g_R \definedas \chi_R g + (1-\chi_R) \Phi^* b.
\] 
To prove the theorem we construct a function $v_R$ such that the metric 
$\lambda_R \definedas (1+v_R)^\kappa g_R$ and show that $\lambda_R$ is as 
close as we want to $g$ provided that $R$ is large enough.

To simplify notation, we set $\hscal^\lambda \definedas \scal^\lambda + n(n-1)$
for any metric $\lambda$ on $M$. We first remark that the
scalar curvatures of $g_R$ and $\lambda_R$ are related through
\[
-\frac{4(n-1)}{n-2} \Delta^{g_R} v_R + \scal^{g_R} (1+v_R) 
= \scal^{\lambda_R} (1+v_R)^{\kappa+1}.
\]
This equation can be rewritten as
\begin{equation} \label{eqPrescScal}
\begin{split}
&\frac{4(n-1)}{n-2} \left(-\Delta^{g_R} v_R + n v_R\right) 
+ n(n-1) \left[(1+v_R)^{\kappa+1} - 1 - (\kappa+1) v_R \right] \\
&\qquad 
+ \hscal^{g_R} v_R
= \hscal^{\lambda_R} (1+v_R)^{\kappa+1} - \hscal^{g_R}.
\end{split}
\end{equation}
To construct the function $v_R$ we introduce the following auxiliary
equation,
\begin{equation} \label{eqAuxiliary}
\frac{4(n-1)}{n-2} \left(-\Delta^{g_R} v_R + n v_R\right) + n(n-1) f(v_R)
+ \hscal^{g_R}~v_R = \chi_R \hscal^g - \hscal^{g_R},
\end{equation}
where we use the notation 
\[
f(x) = (1+x)^{\kappa+1} - 1 - (\kappa+1) x.
\]
Note that if $v_R > -1$ satisfies \eqref{eqAuxiliary} we have
\[
\hscal^{\lambda_R} (1+v_R)^{\kappa+1} = \chi_R \hscal^g
\]
from \eqref{eqPrescScal}. In particular, $\hscal^{\lambda_R} \geq 0$ and 
$\hscal^{\lambda_R} = 0$ on $\bH^n \setminus B_{R+1}$. That is to say, the 
metric $\lambda_R$ satisfies the second and the third assumptions of the 
theorem, provided that $v_R\to 0$ at infinity. We prove the existence of the function $v_R$ by the standard 
monotonicity method. We first remark that since $g$ and $g_R$ coincide 
inside $B_R$, the right hand side of \eqref{eqAuxiliary} has support in 
the annulus $A_{R, R+1}$.

From the fact that $e \definedas \Phi_* g - b$ belongs to 
$C^{2, \alpha}_\tau$, one can easily conclude that
\[
\left| \chi_R \hscal^g - \hscal^{g_R} \right| \leq C e^{-\tau R}
\]
for some constant $C$ depending only on $\left\| e \right\|_{C^{2, \alpha}_\tau}$.
In particular, given $\epsilon > 0$ small enough, the functions 
$v_R^{\pm} = \pm \epsilon$ are barriers for \eqref{eqAuxiliary}, that is,
\[
\left\lbrace
\begin{aligned}
\frac{4(n-1)}{n-2} \left(-\Delta^{g_R} v_R^+ + n v_R^+\right) 
+ n(n-1) f(v_R^+) + \hscal^{g_R}~v_R^+
& \geq \chi_R \hscal^g - \hscal^{g_R},\\
\frac{4(n-1)}{n-2} \left(-\Delta^{g_R} v_R^- + n v_R^-\right) 
+ n(n-1) f(v_R^-) + \hscal^{g_R}~v_R^-
& \leq \chi_R \hscal^g - \hscal^{g_R}.
\end{aligned}
\right.
\]
As a consequence, there exists a function $v_R$ satisfying 
\eqref{eqAuxiliary} and  
\[
-\epsilon \leq v_R \leq \epsilon,
\] 
see for example \cite[Proposition 2.1]{Gicquaud} for more details.

Since $\lambda_R = (1+v_R)^\kappa g$ inside $B_R$, we immediately get
that $\left|\lambda_R - g\right| \leq C \epsilon$ in this region. 
Outside $B_R$, we can just use the fact that $e \in C^{2, \alpha}_\tau$ and 
conclude
\[
\left|\lambda_R - g\right| \leq 
\left|\lambda_R - b\right| + \left|g - b\right| \leq 2 \epsilon
\]
if $R$ is large enough.

From standard analysis on asymptotically hyperbolic manifolds it 
follows that $v_R \in C^{2, \alpha}_{n}$. Since $e \in C^{2, \alpha}_\tau$ 
we see that 
$\left\|\chi_R \hscal^g - \hscal^{g_R}\right\|_{C^{0, \alpha}_{\tau'}} \to 0$
as $R \to \infty$ for any $\tau' \in \left(\frac{n}{2}, \tau\right)$.
This implies that $\left\| v_R \right\|_{C^{2, \alpha}_{\tau'}} \to 0$.

Let $e_R \definedas \Phi_* \lambda_R - b$. From the previous estimate
and the fact that $e = \Phi_*g - b \in C^{2, \alpha}_\tau$, we deduce that
$\left\| e_R - e \right\|_{C^{2, \alpha}_{\tau'}} \to 0$. We choose an 
arbitrary $R_1 > R_0$. As in the proof of Lemma \ref{lmMassLambda} we 
use formulas from \cite[page 114]{HerzlichMassFormulae} or 
\cite{ChruscielHerzlich} to write
\[\begin{split}
&H_{\Phi}^{\lambda_R}(V_{(i)}) - H_{\Phi}^g(V_{(i)}) \\
&\qquad = 
\int_{S_{R_1}} \Big( 
V_{(i)} \left[\divg^b (e_R-e) - d \tr^b (e_R-e) \right] \\
&\qquad \qquad \qquad \qquad \qquad \qquad
+ \tr^b (e_R-e) dV_{(i)} - (e_R-e)(\nabla^b V_{(i)}, \cdot) \Big) 
(\nu_{R_1}) \, d \mu^b \\
&\qquad \qquad 
+ \int_{\bH^n \setminus B_{R_1}} 
\left(
V_{(i)} \left(\hscal^{\lambda_R} - \hscal^g\right) 
+ \mathcal{Q}(e_R, V_{(i)}) - \mathcal{Q}(e, V_{(i)})
\right) 
\, d \mu^b \\
\end{split}\]                                                 
for $i=0, \ldots, n$. From this expression it follows that it suffices 
to prove that $\int_{\bH^n \setminus B_{R_1}} V_{(i)} 
\left(\hscal^{\lambda_R} - \hscal^g\right)\, d \mu^b \to 0$ when 
$R \to \infty$ to get that the mass vector of $\lambda_R$ converges
to that of $g$ as $R$ goes to infinity. This follows immediately from
\[
\left| \int_{\bH^n \setminus B_{R_1}} 
V_{(i)} \left( \hscal^{\lambda_R} - \hscal^g \right) \, d \mu^b \right|
\leq
\int_{\bH^n \setminus B_{R_1}} V_{(0)} \hscal^g 
\left| \frac{\chi_R}{(1+v_R)^{1+\kappa}} - 1 \right|\, d \mu^b
\]
and the fact that $\frac{\chi_R}{(1+v_R)^{1+\kappa}} - 1$ is uniformly
bounded for $R$ large enough and converges to $0$ almost everywhere.
\end{proof}

\bibliographystyle{amsplain}
\bibliography{../biblio-AH-mass}

\end{document}